\def\ints{{\mathbb Z}}
\def\nats{{\mathbb N}}
\def\reals{{\mathbb R}}
\def\proj{{\mathbb P}}
\def\FF{{\mathbb F}}
\def\Gp{{\mathbb G}}
\def\del{{\partial}}
\def\ord{{\text{ord}}}
\def\trace{{\text{tr}}}
\def\cf{{\text{char}}}
\def\Frac{{\text{Frac}}}
\def\coker{{\text{coker }}}
\DeclareMathOperator{\Aut}{Aut}
\DeclareMathOperator{\Spec}{Spec }
\def\Spf{{\mbox{Spf }}}
\def\mc#1{\mathcal{#1}}
\def\mf#1{\mathfrak{#1}}
\def\ol#1{\overline{#1}}
\newtheorem{theorem}{Theorem}[section]
\newtheorem{prop}[theorem]{Proposition}
\newtheorem{lemma}[theorem]{Lemma}
\newtheorem{conjecture}[theorem]{Conjecture}
\newtheorem{corollary}[theorem]{Corollary}
\newtheorem{predefinition}[theorem]{Definition}
\newenvironment{definition}{\begin{predefinition}\rm}{\end{predefinition}}
\newtheorem{preremark}[theorem]{Remark}
\newenvironment{remark}{\begin{preremark}\rm}{\end{preremark}}
\newtheorem{preconstruction}[theorem]{Construction}
\newtheorem{preframework}[theorem]{Framework}
\newenvironment{framework}{\begin{preframework}\rm}{\end{preframework}}
\newtheorem{prenotation}[theorem]{Notation}
\newtheorem{preexample}[theorem]{Example}
\newenvironment{example}{\begin{preexample}\rm}{\end{preexample}}
\newtheorem{preclaim}[theorem]{Claim}
\newtheorem{prequestion}[theorem]{Question}
\newenvironment{question}{\begin{prequestion}\rm}{\end{prequestion}}
\numberwithin{equation}{section}
\title{The (local) lifting problem for curves}
\author[Andrew Obus]{Andrew Obus}
\address{Columbia University Department of Mathematics. MC 4403, 2990 Broadway, New York, NY 10026 USA}
\email{andrewobus@gmail.com}
\subjclass[2000]{Primary 14H37, 12F10; Secondary 11G20, 12F15, 13B05, 13K05, 14G22, 14H30}
\keywords{branched cover, lifting, Galois group, Oort conjecture}
\thanks{The research was partially supported by an NSF Mathematical Sciences Postdoctoral Research Fellowship.  Revision of this work
took place at the Max-Planck-Institut F\"{u}r Mathematik in Bonn.  The author's travel to Kyoto was made possible by NSF Grant DMS 1044746.}
\begin{document}

\begin{abstract}
The \emph{lifting problem} that we consider asks: given a smooth curve in characteristic $p$ and a group of automorphisms, can
we lift the curve, along with the automorphisms, to characteristic zero?  One can reduce this to a local question (the so-called \emph{local
lifting problem}) involving continuous group actions on formal power series rings.
In this expository article, we overview much of the progress that has been made toward determining when the local lifting problem has a solution,
and we give a taste of the work currently being undertaken.  Of particular interest is the case when the group of automorphisms is 
cyclic.  In this case the lifting problem is expected to be solvable---this is the \emph{Oort conjecture}.
\end{abstract}

\maketitle
\tableofcontents

\section{Introduction}\label{Sintro}
Throughout this paper, $p$ represents a prime number, $k$ is an algebraically closed field of characteristic $p$, and $W(k)$ is
the ring of Witt vectors of $k$, that is, the unique complete discrete valuation ring (DVR) in characteristic zero with uniformizer $p$
whose residue field is $k$.  If $R/W(k)$ is a finite extension, then it will be assumed that $R$ is a DVR (i.e., $R$ is integral).  We note that any such
$R$ has residue field $k$.

\subsection{What is the lifting problem?}\label{Slifting}
Many problems in mathematics involve starting with some kind of data in characteristic $p$, and trying to ``lift" the data to characteristic zero (i.e.,
trying to find some analogous data in characteristic zero that reduce to the given data in characteristic $p$).  This paper considers the problem where
the data in question are a curve and a finite group of automorphisms.  More precisely, we have the following:

\begin{question}[Lifting problem]\label{Qlifting}
Let $Y$ be a smooth, proper curve over $k$.  Let $G$ be a finite group acting faithfully on $Y$ by $k$-automorphisms. 
Does there exist $R/W(k)$ finite, and a flat relative curve $Y_R \to \Spec R$ such that
\begin{enumerate}
\item $Y_R \times_R k \cong Y$, 
\item There is an action of $G$ on $Y_R$ by $R$-automorphisms such that the restriction of this action to the special fiber is the original 
$G$-action on $Y$?
\end{enumerate}
\end{question}
If the answers to (1) and (2) are ``yes," then we say that the $G$-action on $Y$ (or the curve $Y$ with $G$-action)
\emph{lifts to characteristic zero}  (or \emph{lifts over $R$}), and that $Y_R$ (with $G$-action) is a \emph{lift} of $Y$ (with $G$-action).

One might ask: does every $G$-action on every $k$-curve lift to characteristic zero?  In fact, the answer is ``no."  It is well known (see, e.g., 
\cite[IV, Ex.\ 2.5]{Ha:ag}) that the number of automorphisms of a curve of genus $g \geq 2$ in characteristic zero is at most $84(g-1)$.
But in characteristic $p$, a curve can have more automorphisms.  Roquette \cite{Ro:aa} 
gave the example of the smooth, projective model of the curve $Y$ given by
$y^2 = x^p - x$.  This curve has genus $(p-1)/2$ and $2p(p^2 - 1)$ automorphisms, which exceeds $84(\frac{p-1}{2} - 1)$ for 
$p \geq 5$. So clearly the canonical $\Aut(Y)$ action on $Y$ cannot lift to characteristic zero.

For another example, it is well known that the automorphism group of $\proj^1_L$ is $PGL_2(L)$ for any field $L$.  If $Y \cong \proj^1_k$, then
the group $G = (\ints/p)^n$ (for any $n$) is a subgroup of $PGL_2(k)$, and thus acts faithfully on $Y$.  
If this $G$-action on $Y$ lifts to characteristic zero, then
$G$ must act on $\proj^1_K$ for some characteristic zero field $K$, that is, $G \subseteq PGL_2(K)$.
But if $n > 1$ and $p^n \neq 4$, then $PGL_2(K)$ does not contain
$(\ints/p)^n$, so the $G$-action on $Y$ cannot lift to characteristic zero.

This of course leads one to ask: \emph{Can we find necessary and sufficient criteria for a $G$-action on $Y$ to lift to characteristic zero?
Over some particular $R$?} 

A group $G$ for which any $G$-action on any curve lifts to characteristic zero is called an \emph{Oort group} for $k$ (\cite{CGH:og}).

\subsection{What is the local lifting problem?}\label{Slocallifting}
Maintain the notation of Question \ref{Qlifting}.
It turns out that there is a local-global principle for lifting (Theorem \ref{Tlocalglobal}).  To wit, if we can lift 
the germs of the curve $Y$ at each of the points where $G$ acts with inertia to characteristic zero, along with the action of the inertia groups, 
then we can lift the $G$-action on $Y$ to characteristic zero.  Clearly, the converse holds as well.
So if we understand this \emph{local lifting problem}, we can understand the lifting problem.  More specifically, 
if $I_y \subseteq G$ is an inertia group at some $y \in Y$, then $I_y$ acts on the complete local ring of $y$ in $Y$, 
which is isomorphic to $k[[u]]$, by continuous $k$-automorphisms.  If there is a lift $Y_R$ of $Y$ with $G$-action to characteristic zero, 
then $I_y$ also acts on the complete local ring of $y$ in $Y_R$, which is isomorphic to $R[[U]]$.

In our description of the local lifting problem below, the letter $G$ can be thought of as one of these inertia groups $I_y$.  The action of $G$ on $k[[u]]$
is called a \emph{local $G$-action}.

\begin{question}[Local lifting problem]\label{Qlocallifting}
Let $G$ be a finite group, and suppose we have an embedding 
$\iota: G \hookrightarrow \Aut_{k, \text{cont}}(k[[u]])$.
Does there exist $R/W(k)$ finite, and an embedding $\iota_R: G \hookrightarrow \Aut_{R, \text{cont}}(R[[U]])$, such that, if $u$ is the
reduction of $U$, then the action of $G$ on $R[[U]]$ reduces to that of $G$ on $k[[u]]$?
\end{question}
If the answer is ``yes," then we say that the local $G$-action \emph{lifts to characteristic zero} (or \emph{lifts over $R$}), 
and the $G$-action on $R[[U]]$ is 
a \emph{lift} of the $G$-action on $k[[u]]$.  For a reformulation in terms of Galois theory, see Question \ref{Qgalois}.

Since the lifting problem does not always have a solution, neither does the local lifting problem.  So we ask:
 \emph{Can we find necessary and sufficient criteria for a local $G$-action to lift to characteristic zero?  Over some particular $R$?} 

A group $G$ for which every local $G$-action on $k[[u]]$ lifts to characteristic zero (and for which there exists a faithful local $G$-action)
is called a \emph{local Oort group} for $k$.

\subsection{Outline}\label{Soutline}
In \S\ref{Sglobal}, we give some basic results on the lifting problem and relate it to algebraic fundamental groups of curves.
In \S\ref{Slocalglobal} we reduce the lifting problem to the local lifting problem, which we focus on for the rest of the paper.  In \S\ref{Sgeneralities}, 
we rephrase the local
lifting problem in terms of Galois theory, and state some basic results on Galois extensions of $k[[t]]$.  Of particular importance is Lemma
\ref{Ljumpsdifferent}, which gives the differents of some such extensions in terms of their higher ramification filtrations.  We also state a weaker
form of the local lifting problem, the \emph{birational} local lifting problem, and sketch a proof that it can always be solved, unlike the 
(standard) local lifting problem.  Lastly, we discuss the relationship between solutions to the local lifting problem and solutions to
its birational variant.  

The heart of the paper begins with \S\ref{Sobstructions}, where we discuss obstructions to the existence of solutions to the local lifting 
problem.  The most important of these obstructions is the \emph{KGB obstruction} (\cite{CGH:ll}), for which we compute some examples in great detail.  
We also give some examples where the KGB obstruction vanishes and where local lifting either is possible or is thought to be possible.  The
remainder of the paper focuses on two interesting such examples.  
The case of lifting a \emph{cyclic} local $G$-action from characteristic $p$ to characteristic zero is the subject of \S\ref{Scyclic}.  
Such local actions are always expected to lift (this is the \emph{Oort conjecture}, see Conjecture \ref{Coort}), and have been shown to lift when
$p^3 \nmid |G|$ (\cite{OSS:ask} and \cite{GM:lg}).  We give an overview of the existing work in \S\ref{Spgroup}--\ref{Szp2lifting}, and in 
\S\ref{Szpnlifting}, we outline a new approach of Wewers and the author to the conjecture.  A success of this new approach has been to show that 
cyclic local $G$-actions lift to characteristic zero when $p^4 \nmid |G|$ (\cite{OW:oc}).  
In \S\ref{Smetacyclic}, we look at the case where $G \cong \ints/p \rtimes \ints/m$ is a nonabelian group.  This case is well-understood
due to the work of Bouw, Wewers, and Zapponi, and we give a summary of their work.  We also remark on the obstacles to generalizing to 
the case where $G$ has a larger $p$-Sylow subgroup.

Appendix \ref{Sgeometry} gives background on the non-archimedean geometry used in the paper, but is by no means a comprehensive reference.  The 
reader who has a basic familiarity with formal/rigid geometry should only need to refer to it for notational purposes, whereas the reader
without such familiarity will need to read it to have a reasonable understanding of much of \S\ref{Slocalglobal}, \S\ref{Scyclic}, and 
\S\ref{Smetacyclic}.  Appendix \ref{Sstablegraph} discusses the concepts of \emph{depth} and \emph{deformation data}, which may be unfamiliar
to most readers.  However, looking at this appendix may be safely postponed until the paper explicitly references it.

We note that the only other major exposition of the lifting and local lifting problems of which we are aware is Brewis's master's thesis 
(\cite{Br:ac}).  The aspects on which \cite{Br:ac} focuses are fairly complementary to those on which we focus here.  In particular, \cite{Br:ac}
gives many details on global techniques, the local-global principle, and the birational local lifting problem, whereas we spend the majority of our
time discussing specific examples of the local lifting problem.

\subsection{Notation and conventions}\label{Snotation} 
If $\Gamma$ is a group of automorphisms of a ring $A$, we write $A^{\Gamma}$ for the fixed ring under $\Gamma$.
For a finite group $G$, a \emph{$G$-Galois extension} (or \emph{$G$-extension}) of rings is a finite
extension $A \hookrightarrow B$ (also written $B/A$) of integrally closed integral domains such that the associated extension of fraction fields is 
$G$-Galois.  We do \emph{not} require $B/A$ to be \'{e}tale.
 
If $x$ is a scheme-theoretic point of a scheme $X$, then $\mc{O}_{X,x}$ is the local ring of $x$ in $X$.  
If $R$ is any local ring, then $\hat R$ is the completion of $R$ with respect to its maximal ideal. 
A \emph{branched cover} $f: Y \to X$ is a finite, generically \'{e}tale morphism of geometrically connected, normal schemes. 
A \emph{$G$-Galois cover} (or \emph{$G$-cover}) is a branched cover with an isomorphism $G \cong \Aut(Y/X)$ such that $G$ acts transitively on 
each geometric fiber of $f$.  Note that $G$-covers of affine schemes give rise to $G$-extensions of rings, and vice versa.

Suppose $f: Y \to X$ is a branched cover, with $X$ and $Y$ locally noetherian.  If $x \in X$ and $y \in Y$ are smooth codimension $1$ points such that 
$f(y) = x$, then the \emph{ramification index} of $y$ is the ramification index of the extension of complete local rings 
$\hat{\mc{O}}_{X, x} \to \hat{\mc{O}}_{Y, y}$.  If $f$ is Galois, then the \emph{branching index} of a smooth codimension $1$ point $x \in X$ 
is the ramification index of any point $y$ in the fiber of $f$ over $x$.  If $x \in X$ (resp.\ $y \in Y$) has branching index (resp.\
ramification index) greater than 1, then it is called a \emph{branch point} (resp.\ \emph{ramification point}).

If $R$ is any ring with a non-archimedean absolute value $| \cdot |$, then $R\{T\}$ 
is the ring of power series $\sum_{i=0}^{\infty} c_i T^i$ such that $\lim_{i \to \infty} |c_i| = 0$.  If $R$ has characteristic zero and 
residue characteristic $p$ (that is, if $0 < |p| < 1$), then we normalize the valuation on $R$ and on $K = \Frac(R)$ so that $p$ has valuation $1$.

If $X$ is a smooth curve over a complete discrete valuation field $K$ with valuation ring $R$, then a \emph{semistable}
model for $X$ is a relative curve $X_R \to \Spec R$ with $X_R \times_R K \cong X$ and semistable special fiber (i.e.,
the special fiber is reduced with only ordinary double points for singularities).  

Suppose $S$ is a ring of characteristic zero, with an ideal $I$ such that $S/I$ has characteristic $p$.  If an indeterminate in $S$ is given by a capital
letter, our convention (which we will no longer state explicitly) will be to write its reduction in $S/I$ using the respective lowercase letter.  
For example, if $I \subseteq W(k)[[U]]$ is the ideal generated by $p$, then $W(k)[[U]]/I \cong k[[u]]$, and $u$ is the reduction of $U$. 

If $B/A$ is a finite extension of Dedekind rings, the \emph{degree of the different} of $B/A$ is its length as a $B$-module.  If $L/K$ is a finite extension
of discrete valuation fields with valuation rings $A \subseteq K$ and $B \subseteq L$, 
then the degree of the different of $L/K$ is the degree of the different of $B/A$.

The group $D_n$ is the dihedral group of order $2n$.

\section{Preliminary global results}\label{Sglobal}
Before asking about lifting curves with automorphisms with characteristic zero, it makes sense to ensure that the curves themselves lift to 
characteristic zero, without worrying about automorphisms.  Luckily, this is true:

\begin{prop}\label{Ptrivgroup}
The trivial group is an Oort group.  Furthermore, the lifting of any curve with trivial action can be done over $W(k)$.
\end{prop}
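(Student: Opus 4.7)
The plan is to lift $Y$ step-by-step over the Artinian quotients $W_n(k) := W(k)/p^n W(k)$ using formal deformation theory, and then algebraize the resulting formal scheme via Grothendieck's existence theorem. Since the group action is trivial, there is no ramification or wild inertia to worry about, so this is a purely characteristic-independent problem about lifting smooth proper schemes.

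For the deformation step, I would appeal to the standard fact that, given a smooth proper scheme $Y_n \to \Spec W_n(k)$ with special fiber $Y$, the obstruction to finding a smooth lift $Y_{n+1} \to \Spec W_{n+1}(k)$ lies in $H^2(Y, \mc{T}_Y)$, where $\mc{T}_Y$ is the tangent sheaf, while the set of isomorphism classes of lifts (when nonempty) forms a torsor under $H^1(Y, \mc{T}_Y)$. Because $Y$ is one-dimensional, Grothendieck's vanishing theorem gives $H^2(Y, \mc{T}_Y) = 0$, so I can inductively construct a compatible tower of smooth proper curves $\{Y_n\}$ over $\{W_n(k)\}$.

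To promote the resulting formal scheme $\mf{Y} = \varinjlim Y_n$ to an actual $W(k)$-scheme, I would apply Grothendieck's existence theorem. This requires a formally ample line bundle. Starting from any ample $\mc{L}$ on $Y$ (which exists since a smooth proper curve over an algebraically closed field is projective), I would lift $\mc{L}$ compatibly through the tower: the obstruction to lifting a line bundle from $Y_n$ to $Y_{n+1}$ lies in $H^2(Y, \mc{O}_Y) = 0$, again by dimensional vanishing. The resulting compatible system $\{\mc{L}_n\}$ is formally ample (ampleness may be checked on the special fiber), so Grothendieck's existence theorem produces a projective $W(k)$-scheme $Y_R$ whose formal completion along $Y$ recovers $\mf{Y}$; in particular $Y_R \times_R k \cong Y$ with $R = W(k)$, as required.

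The main obstacle is not really conceptual; everything reduces to the dimensional vanishing of $H^2$ on a curve. The one point requiring a little care is the algebraization step, where one must verify all the hypotheses of Grothendieck's existence theorem---in particular the formal ampleness of $\{\mc{L}_n\}$ and the coherence of the bookkeeping across the tower---but this is a standard application, and crucially every step is performed over $W_n(k)$ itself, so no extension of $W(k)$ is needed.
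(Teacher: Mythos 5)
Your argument is exactly the paper's proof, just with the SGA~1 citations unpacked: the paper also lifts formally using the vanishing of $H^2(Y,\mc{T}_Y)$ (\cite[III, Corollaire 6.10]{sga1}) and then algebraizes via the vanishing of $H^2(Y,\mc{O}_Y)$ and Grothendieck's existence theorem (\cite[III, Prop.\ 7.2]{sga1}), all over $W(k)$ itself. So the proposal is correct and takes essentially the same approach.
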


\begin{proof}
Since $Y$ is a curve, the cohomology $H^2(Y, \mc{T}_Y)$ is trivial, where $\mc{T}_Y$ is the tangent sheaf of $Y$.  
By \cite[III, Corollaire 6.10]{sga1}, there is a smooth formal relative curve $\mc{Y}/W(k)$, whose special fiber is $Y$.  Since, in addition, 
$H^2(Y, \mc{O}_Y)$
is trivial, \cite[III, Prop.\ 7.2]{sga1} shows that $\mc{Y}$ is in fact the formal completion of an \emph{algebraic} relative curve $Y_{W(k)}/W(k)$ at the 
special fiber.  Then $Y_{W(k)}$ is the lift we seek. 
\end{proof}

As the next proposition shows, Proposition \ref{Ptrivgroup} 
allows us to shift our perspective from lifting \emph{curves} to lifting \emph{branched covers}.  Let $G$ act faithfully on $Y$, let $X \cong Y/G$,
and let $f: Y \to X$ be the canonical map.  Suppose $R/W(k)$ is finite.  Then lifting the $G$-action on $Y$ over $R$ is the same as lifting the $G$-cover 
$f$ to a $G$-cover $f_R: Y_R \to X_R$, where $Y_R$ and $X_R$ are lifts of the curves $Y$ and $X$ over $R$  
(see \S\ref{Snotation} for the definition of $G$-cover).

\begin{prop}\label{Ptame}
\begin{enumerate}[(i)]
\item If $G$ acts on $Y$ with trivial inertia groups, then the $G$-action on $Y$ lifts over $W(k)$.
\item In fact, even if $G$ acts on $Y$ with prime-to-$p$ inertia groups, then the $G$-action on $Y$ lifts over $W(k)$.  
In particular, any group of prime-to-$p$ order is an Oort group.
\end{enumerate}
\end{prop}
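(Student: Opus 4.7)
For part (i), my plan is to deduce this directly from the invariance of the étale site under infinitesimal deformation. By Proposition \ref{Ptrivgroup}, the quotient $X := Y/G$ admits a smooth proper lift $X_{W(k)} \to \Spec W(k)$. Since $G$ acts with trivial inertia, $f : Y \to X$ is a finite étale $G$-cover. By SGA 1, Exp.\ I, étale morphisms lift uniquely along nilpotent thickenings, so we may lift $f$ compatibly to each $X_{W(k)} \otimes W(k)/p^n$, together with the $G$-action (which is functorial, hence preserved in the lifting). Since $X_{W(k)}$ is projective, the resulting formal $G$-cover algebraizes by Grothendieck's existence theorem, producing an étale $G$-cover $Y_{W(k)} \to X_{W(k)}$ whose special fiber recovers $f$ with its $G$-action.

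For part (ii), the plan is to adapt the argument of (i) to the tame setting using Grothendieck's specialization theorem for the tame fundamental group (SGA 1, Exp.\ XIII). Lift $X$ to $X_{W(k)}$ as before. The branch locus $B \subset X$ is a finite reduced subscheme of smooth points of $X$, so by Hensel's lemma each branch point lifts to a section of $X_{W(k)}/W(k)$, giving a relative étale divisor $B_{W(k)} \subset X_{W(k)}$. Since every inertia group of the $G$-action has order prime to $p$, the restriction $f^{\circ} : Y \setminus f^{-1}(B) \to X \setminus B$ is an étale $G$-cover that is tamely ramified along $B$. Grothendieck's theorem then produces a tame étale $G$-cover of $X_{W(k)} \setminus B_{W(k)}$ lifting $f^{\circ}$. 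Taking the normalization of $X_{W(k)}$ in the function field of this lift produces a branched $G$-cover $Y_{W(k)} \to X_{W(k)}$; tameness forces the complete local rings at the ramification points to be of the form $R[[T]] \hookrightarrow R[[T^{1/e}]]$ with $e$ prime to $p$, so $Y_{W(k)}$ is smooth over $W(k)$ and its special fiber recovers $Y$ with the correct $G$-action. The final assertion of (ii) is then immediate: if $|G|$ is prime to $p$, every inertia group is automatically prime to $p$, so the hypothesis is met for any faithful action of $G$ on any $Y$.

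The main obstacle is justifying the tame lifting step in (ii). Concretely, lifting thickening-by-thickening requires the vanishing of obstructions living in an $H^2$ group with coefficients in the tangent sheaf of the cover; tameness (roughly, the invariance of Kähler differentials at the branch points under tame cyclic base change) is precisely what makes these obstructions vanish. This is the substance of Grothendieck's invariance of the tame fundamental group; once it is in place, algebraization is standard because the curves in sight are projective. Note also that the prime-to-$p$ roots of unity needed to describe the tame ramification already live in $W(k)$ by Hensel's lemma applied to $k$, so no extension of $W(k)$ is required.
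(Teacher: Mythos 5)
Your proposal is correct and follows essentially the same route as the paper: lift $X = Y/G$ via Proposition \ref{Ptrivgroup}, then invoke Grothendieck's \'{e}tale lifting (SGA 1, Exp.\ I) together with algebraization for part (i), and Grothendieck's tame lifting theory (SGA 1, Exp.\ XIII) for part (ii). The extra detail you supply about lifting the branch divisor, normalizing, and checking smoothness at tame ramification points is exactly the content packaged inside the citation the paper uses, so there is no substantive difference.
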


\begin{proof}
In case (i), the $G$-cover $f: Y \to Y/G \cong X$ is \'{e}tale.  By Proposition \ref{Ptrivgroup}, we can lift $X$ to a flat, smooth curve $X_{W(k)}$.
By Grothendieck's theory of \'{e}tale lifting (\cite[I, Corollaire 8.4]{sga1}, combined with \cite[III, Prop.\ 7.2]{sga1}), 
the $G$-cover $f$ lifts to a $G$-cover $f_{W(k)}: Y_{W(k)} \to X_{W(k)}$ over $W(k)$.
In case (ii), we instead use Grothendieck's theory of tame lifting (\cite[XIII, Corollaire 2.12]{sga1}, or \cite{We:dt} for an exposition).
\end{proof}

\begin{remark}\label{Rlocaltamelifting}
In \S\ref{Scyclic}, we will see how Proposition \ref{Ptame} also follows from the local-global principle.
\end{remark}

\begin{remark}\label{Rtamemotivation}
In fact, if $f: Y \to X$ is tamely ramified (as in the proof of Proposition \ref{Ptame}), 
then the generic fiber of $f_{W(k)}$ is branched at the same number of points with the same branching indices as $f$.
Furthermore, the lifting of $f$ over $W(k)$ (or over any finite $R/W(k)$) 
is unique once the branch points of the generic fiber are specified.  

This has the following consequence about
fundamental groups of curves:  Let $U$ be an affine curve over $k$, let $X$ be the smooth projective closure of $U$, let
$X_{W(k)}$ be a lift of $X$ to $W(k)$, and let $U_{W(k)} \subseteq X_{W(k)}$ be a lift of $U$ to $W(k)$ such that the generic fiber of
$X_{W(k)} \backslash U_{W(k)}$ contains exactly one point specializing to each point of $X \backslash U$.  Let $L$ be an algebraic closure of
$\Frac(W(k))$ and let $U_L$ and $X_L$ be the base changes of $U_{W(k)}$ and $X_{W(k)}$ to $L$.
If $\pi_1^t(U)$ is the \emph{tame fundamental
group} of $U$ (that is, the automorphism group of the pro-universal tame cover of $X$, \'{e}tale above $U$), 
and $\pi_1^t(U_L)$ is the \emph{$p$-tame fundamental group} of $U_L$ (same definition, but replace
``tame" with ``ramified of prime-to-$p$ index"), then there is a natural surjection $$\phi: \pi_1^t(U_L) \to \pi_1^t(U),$$ given as follows: If 
$\gamma \in \pi_1^t(U_L)$, then $\phi(\gamma)$ is determined by how it acts on tame covers $f: Y \to X$, \'{e}tale over $U$.  
Take the unique lift $f_{W(k)}$ of $f$ over $W(k)$ such that if $f_{\mc{O}_L}$ is the base change of $f_{W(k)}$ to the valuation ring $\mc{O}_L$ of $L$, 
the branch locus of the generic fiber of $f_{\mc{O}_L}$ 
is contained in $X_L \backslash U_L$.  Note that the special fiber of $f_{\mc{O}_L}$ is identical to that of 
$f_{W(k)}$.  By definition, $\gamma$ acts on the generic fiber of this base change of $f$, and 
the reduction of this action is the desired action on $Y$.
It is also clear that $\phi$ is surjective, as one can compatibly lift any compatible system of automorphisms of tame covers of $X$, \'{e}tale
over $U$.  Since $\pi_1^t(U_L)$ is well understood (as are all fundamental groups of curves over algebraically closed fields of characteristic 
zero), the existence of $\phi$ gives us useful information about $\pi_1^t(U)$.  For instance, we obtain that $\pi_1^t(U)$ is topologically finitely generated.
\end{remark}

Given that we can (essentially uniquely) lift tame covers of curves to characteristic zero, the next natural question is to
ask when we can lift wild covers, and what kinds of moduli these lifts have.  Hopefully, for affine curves $U/k$, this can shed some light 
on larger quotients of $\pi_1(U)$ than $\pi_1^t(U)$.  
In any case, this gives us motivation to study the lifting problem for a $G$-action on $Y$ by examining the local properties of the $G$-cover 
$f: Y \to Y/G$ near its ramification points.

\section{Reduction to the local lifting problem}\label{Slocalglobal}
The \emph{local-global principle} below allows us to study the lifting problem by way of the local lifting problem.

\begin{theorem}[Local-global principle]\label{Tlocalglobal}
Let $Y$ be a smooth, projective, connected curve over $k$, with a faithful action of $G$ by $k$-automorphisms.
Let $y_1, \ldots, y_s \in Y$ be the points where $G$ acts with nontrivial inertia.  For each $j$, $1 \leq j \leq s$,
let $G_j$ be the inertia group of $y_j$ in $G$, and let $\iota_j: G_j \hookrightarrow \Aut_{k, \text{cont}} k[[u_j]]$ be the induced local action 
on the complete 
local ring of $y_j$.  If $R/W(k)$ is finite, then the curve $Y$ (with $G$-action) lifts over $R$ iff each of the local $G_j$-actions lifts over $R$.
\end{theorem}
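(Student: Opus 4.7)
The \emph{only if} direction is immediate: given a lift $Y_R \to \Spec R$ as in the theorem, each completed local ring $\hat{\mc{O}}_{Y_R, y_j}$ is isomorphic to $R[[U_j]]$ and carries a $G_j$-action by continuous $R$-automorphisms whose reduction modulo the maximal ideal of $R$ recovers $\iota_j$; a single sentence suffices to dispose of it.

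For the converse, the plan is to build $Y_R$ by \emph{formal patching}. First I would apply Proposition \ref{Ptrivgroup} (followed by base change to $R$) to lift $X = Y/G$ to a smooth projective relative curve $X_R/R$, and fix sections $x_{R,1}, \ldots, x_{R,s}$ of $X_R \to \Spec R$ specializing to the branch points $x_j = f(y_j)$ of $f \colon Y \to X$. Since $f$ is \'{e}tale away from the $x_j$, Grothendieck's theory of \'{e}tale lifting (applied infinitesimally along the special fiber) produces a canonical \'{e}tale $G$-cover of the formal completion of $X_R \setminus \{x_{R,j}\}_j$ along its special fiber that reduces to the restriction of $f$ over $X \setminus \{x_j\}_j$.

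At each branch point I would use the hypothesized local lift $\iota_{R,j}$ to form the $G_j$-cover $\Spec R[[U_j]] \to \Spec R[[U_j]]^{G_j}$, then identify $R[[U_j]]^{G_j}$ with $\hat{\mc{O}}_{X_R, x_{R,j}}$ (the fixed ring is again a one-variable formal power series ring over $R$, and reduction modulo the maximal ideal of $R$ pins down the identification). Inducing from $G_j$ up to $G$ yields a formal $G$-cover over the formal disk $\Spf \hat{\mc{O}}_{X_R, x_{R,j}}$, whose restriction to the punctured formal disk is \'{e}tale and has the same reduction as the restriction of the global \'{e}tale lift constructed above. By the rigidity of \'{e}tale covers, the two restrictions agree canonically on the punctured disks, which supplies the overlap data needed for formal patching.

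The hard part will be algebraizing this patched formal $G$-cover into an honest scheme-theoretic cover $Y_R \to X_R$. This is exactly the role of Grothendieck's formal existence theorem (formal GAGA), which applies because $X_R/R$ is projective, or equivalently of the formal/rigid patching machinery described in Appendix \ref{Sgeometry}; the purely local deformation theory used up to this point cannot produce a global algebraic lift, so the projectivity of $X$ is what is being used in an essential way here. Once algebraization is granted, the resulting $Y_R$ is automatically a flat relative curve over $R$ with a $G$-action by $R$-automorphisms whose special fiber is $Y$ with the prescribed $G$-action, completing the proof.
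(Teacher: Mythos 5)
Your proposal is correct and follows essentially the same route as the paper's sketch: lift $X = Y/G$ via Proposition \ref{Ptrivgroup}, lift the \'{e}tale part of $f$ by Grothendieck's \'{e}tale lifting theory, insert the hypothesized local lifts at the branch points, glue along the boundaries by formal patching (using that \'{e}tale covers of the overlap are rigid, i.e., determined by their common reduction), and algebraize by Grothendieck's Existence Theorem. One caution: the correct gluing locus is the boundary $\Spf R[[T_j]]\{T_j^{-1}\}$ of the formal disc (a complete, hence henselian, DVR with residue field $k((t_j))$, over which both covers are indeed \'{e}tale with the same reduction), not the punctured disc obtained by deleting the origin from $\Spec R[[T_j]]$, since the local lift typically acquires extra branch points in the interior of the disc and so is \emph{not} \'{e}tale there.
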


\begin{proof}[Sketch of proof] (see \cite[\S1.2]{Sa:fl}, or \cite[\S3]{Ga:pr} for many more details) 
Clearly, if the $G$-action on $Y$ lifts over $R$, then so do all the local actions.  Now, assume each of the local actions
lifts over $R$.  Consider the $G$-cover $f: Y \to X := Y/G$.  By Proposition \ref{Ptrivgroup}, $X$ has a smooth lift $X_R$ over $R$.  Let $\mc{X}$ 
be the formal completion of $X_R$ at $X$.
Let $B$ be the branch locus of $f$, set $V = X \backslash B$, and set $W = f^{-1}(V) = Y \backslash \{y_1, \ldots, y_s\}$.  Let $\mc{V} \subseteq \mc{X}$ 
be the formal subscheme associated to $V \subseteq X$ (\S\ref{Sformal}).
By Grothendieck's theory of \'{e}tale lifting (\cite[I, Corollaire 8.4]{sga1}), 
the $G$-cover $f|_{W}: W \to V$ lifts to an \'{e}tale $G$-cover of formal schemes $\mc{W} \to \mc{V}$ over $R$.  The boundary of $\mc{W}$ 
is isomorphic to disjoint union $\coprod_{j=1}^s \mc{B}_j$, where each $\mc{B}_j$ is isomorphic to the boundary of a disc (\S\ref{Sdiscs}).  
Each $\mc{B}_j$ corresponds to the point $y_j$, and $G_j$ acts on $\mc{B}_j$.

By assumption, each local $G_j$-action on $\hat{\mc{O}}_{Y, y_j}$
lifts over $R$ to a continuous $R$-linear $G_j$-action on the open disc $\mc{D}_j \cong \Spf R[[U_j]]$.  
The action of $G_j$ on $\mc{D}_j$ induces an action on its boundary $\partial \mc{D}_j$.  In fact, the $G_j$-action on $\partial \mc{D}_j$ is 
isomorphic to the action on $\mc{B}_j$.
Thus by identifying $\mc{B}_j$ and $\partial \mc{D}_j$, we can use formal patching to ``glue" each of these discs $\mc{D}_j$ to $\mc{W}$ in a 
$G_j$-equivariant way.  This yields a formal curve with $G$-action and projective special fiber 
(see, e.g., \cite[\S1.2]{Sa:fl}, or \cite[Ch.\ 3, \S4]{He:ht} for more details).  
By Grothendieck's Existence Theorem, this formal curve is the projective completion of a
smooth projective curve $Y_R$ with $G$-action.  This is the lift we seek.
\end{proof} 

\begin{remark}\label{Rbertinmezard}
Bertin and M\'{e}zard gave an alternate proof of Theorem \ref{Tlocalglobal} that relies on deformation theory (see \cite[\S3]{BM:df}).
There is also a proof by Green and Matignon (\cite[III]{GM:lg}).
\end{remark}

Because we have Theorem \ref{Tlocalglobal} at our disposal, the rest of this paper will focus exclusively on the local lifting problem.  For
interesting work involving global lifting, see \cite{Oo:ac}, \cite{OSS:ask}, and \cite{CGH:og}. 
For an overview of some of the techniques used in \cite{Oo:ac} and \cite{OSS:ask}, see \cite{Br:ac}, especially Chapter 3.

\section{Generalities on local extensions}\label{Sgeneralities}
\subsection{Power series rings}\label{Spowerseries}
Consider the power series ring $A = k[[u]]$.  A continuous $k$-automorphism $\gamma$ of $A$ (for the $u$-adic topology) is determined by the
image of $u$.  In particular, it is necessary and sufficient to have $\gamma(u) = \sum_{i=1}^{\infty} a_iu^i$, with
$a_1 \ne 0$.  A first, na\"{i}ve approach to the local lifting problem would be to try to write down a group $G$ of automorphisms of $A$ explicitly as 
power series, and then try to lift them explicitly.  However, this method is of limited usefulness.  It quickly becomes difficult even to write down 
power series of finite order in $k[[u]]$, let alone to write down a lift to $R[[U]]$, where $R/W(k)$ is finite.  For order $m$ prime to $p$, the 
series $\gamma(u) = \zeta_m u$, with $\zeta_m$ a primitive $m$th root of unity, works.  For order $p$, the series $\gamma(u) = \frac{u}{1+u} =
u - u^2 + u^3 - \cdots$
works.  But it is already not easy to see how to lift this to an order $p$ automorphism of 
$R[[U]]$ (note that lifting the power series $\gamma(u)$ to $R[[U]]$ na\"{i}vely will not in general result in an automorphism of order $p$).  

For automorphisms of order divisible by $p^2$, the situation is even worse.  In fact, the only explicit continuous 
$k$-automorphism of $k[[u]]$ of order $p^n$ known to the author, for $n \geq 2$, is an automorphism $\gamma$ of order $4$ given in \cite{CS:no} by
$$\gamma(u) = u + u^2 + \sum_{j=0}^{\infty}\sum_{\ell = 0}^{2^j-1}u^{6\cdot 2^j+ 2\ell}.$$
Trying to lift this automorphism while maintaining its order is a nightmare that we shall not attempt.  

In order to lift a $G$-action on $k[[u]]$, we must not only lift automorphisms with the correct orders, but we must also take into account the group
structure of $G$.  Since even lifting the automorphisms with the correct orders seems to be beyond our reach, we will not pursue this method further.

\subsection{Galois extensions}\label{Sgalois}
The more fruitful approach to the local lifting problem has been to view it from the perspective of Galois theory.  This is analogous to studying
the (global) lifting problem by studying branched covers of curves.
Suppose the group $G$ acts on $k[[u]]$ by continuous $k$-automorphisms.  Then $k[[u]]^G$ is a complete DVR with
residue field $k$, so by the structure theorem for complete DVRs (\cite[II, Theorem 2]{Se:lf}), it must be abstractly isomorphic to $k[[t]]$.  
Choosing an isomorphism gives $k[[u]]/k[[t]]$ the structure of a $G$-extension.  Conversely, if $A/k[[t]]$ is any $G$-Galois extension, 
then $A \cong k[[u]]$, again by the structure theorem.  Likewise, if $R/W(k)$ is finite, then
$R[[U]]^G \cong R[[T]]$ if $G$ acts by continuous $R$-automorphisms (\cite[Proposition 2.3.1]{Ra:sp}), 
and any $G$-extension $A_R/R[[T]]$ for which $A_R \otimes_R k \cong k[[u]]$ must satisfy
$A_R \cong R[[U]]$. 
We can now rephrase the local lifting problem as follows:
\begin{question}[Local lifting problem, Galois formulation]\label{Qgalois}
Suppose $G$ is a finite group, and $A/k[[t]]$ is a $G$-Galois extension, for a finite group $G$.  Does there exist $R/W(k)$ finite,
and a $G$-Galois extension $A_R/R[[T]]$ such that $A_R \otimes_R k \cong A$ and the $G$-action on $A_R$ reduces to the given $G$-action on $A$?
\end{question}
If such a lift exists, we say that $A_R/R[[T]]$ is a \emph{lift} of $A/k[[t]]$ over $R$, or that $A/k[[t]]$ \emph{lifts to characteristic zero} (or
\emph{lifts over $R$}).\\

\subsubsection{Properties of Galois extensions of complete DVRs.}\label{Sdvrextensions}
The following facts are from \cite[IV]{Se:lf}.
Let $F = k((t))$.  If $L/F$ is a finite $G$-Galois extension of complete discrete valuation fields, then 
$G$ must be of the form $P \rtimes \ints/m$, where $P$ is a $p$-group and $m$ is prime to $p$.  The group $G$ has a filtration $G 
= G_0 \supseteq G_i$ ($i \in \reals_{\geq 0}$) for the lower numbering, and $G
\supseteq G^i$ for the upper numbering ($i \in \reals_{\geq 0}$).  If $i \leq j$, then $G_i \supseteq G_j$ and $G^i
\supseteq G^j$.  
The subgroup $G_i$ (resp.\ $G^i$) is known as the \emph{$i$th higher ramification group for the lower numbering (resp.\ the upper 
numbering)}.  The two filtrations are related by Herbrand's formula (see \cite[IV, \S1, \S3]{Se:lf}).   

One knows that $G_0 = G^0 = G$, and that $G_1 = G^{\frac{1}{m}} = P$.   
For sufficiently large $i$, $G_i = G^i = \{id\}$.   Any $i$ such that $G^i \supsetneq G^{i + \epsilon}$ for all $\epsilon > 0$ is
called an \emph{upper jump} of the extension $L/F$.  Likewise, if $G_i \supsetneq G_{i+\epsilon}$ for $\epsilon > 0$, then $i$ is called a
\emph{lower jump} of $L/F$.  If $i$ is a lower (resp.\ upper) jump, $i > 0$, and $\epsilon > 0$ is sufficiently small,
then $G_i/G_{i + \epsilon}$ (resp.\ $G^i/G^{i + \epsilon}$) is an elementary abelian $p$-group.  The lower jumps are all integers.  

The above discussion applies equally to $G$-extensions of $k[[t]]$.  In particular, any group $G$ that arises in the local lifting 
problem is of the form $P \rtimes \ints/m$, where $P$ is a $p$-group and $p \nmid m$.  The degree $\delta$ 
of the different of a $G$-extension $A/k[[t]]$ is given by the formula
\begin{equation}\label{Ebasicdifferent}
\delta = \sum_{i=0}^{\infty} (|G_i| - 1).
\end{equation}  
The fact that 
the different can be determined from the higher ramification filtration for the lower numbering (and,
indeed, for the upper numbering also) will be important for determining whether a given extension 
of $R[[T]]$ for $R/W(k)$ finite is, in fact, a lift (see \S\ref{Sbirational}).\\

\begin{remark}\label{Rwildhigher}
Note that if $A/k[[t]]$ is a wildly ramified $G$-extension, then the degree of the different of $A/k[[t]]$ is always strictly greater than $|G| - 1$.  By the 
Hurwitz formula, this means that if $Y \to X$ is a wildly ramified cover of curves over $k$, then the genus of $Y$ is always higher than it would be 
if the cover had the same ramification points and indices, but was in characteristic $0$.
\end{remark}

\subsubsection{Cyclic Extensions.}\label{Scyclicjumps}
If $G \cong \ints/p^n$, then any $G$-extension must have $n$ different upper jumps $u_1 < \cdots < u_n$.  
\begin{lemma}\label{Ljumpsdifferent}
Set $u_0 = 0$.  If $L/K$ is a $\ints/p^n$-extension with upper jumps $u_1 < \cdots < u_n$, then the degree of the different of $L/K$ is 
$$p^n - 1 + \sum_{i=1}^n p^{i-1}(p^{n-i+1} - 1)(u_i - u_{i-1}).$$  
\end{lemma}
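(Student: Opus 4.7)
The plan is to compute $\delta$ by means of equation (\ref{Ebasicdifferent}), which expresses $\delta$ in terms of the orders of the \emph{lower} ramification groups, and then to translate the given upper jump data into lower jump data via Herbrand's formula. Since $G \cong \ints/p^n$ has exactly one subgroup of each order $p^k$ (for $0 \leq k \leq n$), the upper filtration is completely pinned down by $u_1 < \cdots < u_n$: setting $u_0 = 0$, the filtration $\{G^u\}$ takes the value $\ints/p^{n-i+1}$ on the half-open interval $(u_{i-1}, u_i]$, and is trivial for $u > u_n$.

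First I would use Herbrand's function $\psi$, which satisfies $\psi'(v) = [G_0 : G^v]$ on any interval where the upper filtration is constant, together with the identity $G_{\psi(v)} = G^v$. On $(u_{i-1}, u_i)$ this slope equals $p^n / p^{n-i+1} = p^{i-1}$, so setting $b_i = \psi(u_i)$ yields the recursion
$$b_i - b_{i-1} = p^{i-1}(u_i - u_{i-1}) \qquad (b_0 = 0).$$
The $b_i$ are then exactly the lower jumps of $L/K$ (all integers, as recorded before the statement of the lemma), and the lower filtration is determined by $|G_i| = p^n$ for $0 \leq i \leq b_1$, $|G_i| = p^{n-k+1}$ for $b_{k-1} < i \leq b_k$ (when $2 \leq k \leq n$), and $|G_i| = 1$ for $i > b_n$.

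Summing equation (\ref{Ebasicdifferent}) by grouping integer indices $i$ according to the size of $G_i$ then gives
$$\delta = (b_1 + 1)(p^n - 1) + \sum_{k=2}^{n} (b_k - b_{k-1})(p^{n-k+1} - 1),$$
which, after peeling off the leading $p^n - 1$ and reindexing with $b_0 = 0$, rearranges to
$$\delta = (p^n - 1) + \sum_{k=1}^{n} (b_k - b_{k-1})(p^{n-k+1} - 1).$$
Substituting the recursion $b_k - b_{k-1} = p^{k-1}(u_k - u_{k-1})$ yields the claimed formula. The only delicate point is the bookkeeping of endpoints: one must verify that each integer $i$ with $0 \leq i \leq b_n$ is counted in exactly one stratum, which is the reason the initial ``$+1$'' arises for the block where $G_i = G$ and is then absorbed into the leading $p^n - 1$. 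Beyond this, the argument is entirely routine once Herbrand's formula and equation (\ref{Ebasicdifferent}) are granted.
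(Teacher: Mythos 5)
Your proof is correct and follows exactly the route the paper takes: the paper's proof is simply the observation that the formula follows from equation (\ref{Ebasicdifferent}) together with Herbrand's formula, and your argument fills in precisely that computation (converting the upper jumps $u_i$ to the lower jumps $b_i$ via $\psi$ and summing the lower filtration). The endpoint bookkeeping you flag is handled correctly, since the lower jumps are integers and the block $0 \leq i \leq b_1$ indeed contributes $(b_1+1)(p^n-1)$.
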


\begin{proof} This follows from (\ref{Ebasicdifferent}) and Herbrand's formula. \end{proof}

\subsection{Birational lifts and the different criterion}\label{Sbirational}
Usually, when dealing with Galois extensions of $k[[t]]$, it will be more convenient to deal with extensions of fraction fields than extensions of rings.
For instance, by Artin-Schreier theory, one knows that any $\ints/p$-extension $L/k((t))$ is given by an equation of the form $y^p - y = f(t)$.  
But writing down equations for the
integral closure of $k[[t]]$ in $L$ is much more difficult.  So we will often want to think of a Galois ring extension in terms of the associated extension
of fraction fields.  In particular, we have the following \emph{birational local lifting problem}:

\begin{question}[Birational local lifting problem]\label{Qbirational}
Let $A/k[[t]]$ be a $G$-extension.  Does there exist $R/W(k)$ finite, and a $G$-extension
$M/\Frac(R[[T]])$ such that:
\begin{enumerate}
\item If $A_R$ is the integral closure of $R[[T]]$ in $M$, then the integral closure of $A_R \otimes_R k$ is isomorphic to $A$,
\item The $G$-action on $\Frac(A) = \Frac(A_R \otimes_R k)$ induced from that on $A_R$ restricts to the given $G$-action on $A$?
\end{enumerate}
\end{question}
If so, we say that $A/k[[t]]$ \emph{lifts birationally to characteristic zero} and that
$A_R/R[[T]]$ is a \emph{birational lift} of $A/k[[t]]$ to characteristic zero.  

The birational local lifting problem evidently requires less than the local lifting problem to answer in the affirmative.  In fact, Garuti has 
proved the following:

\begin{theorem}[\cite{Ga:pr}]\label{Tbirational}
Any $G$-extension $A/k[[t]]$ lifts birationally to characteristic zero.
\end{theorem}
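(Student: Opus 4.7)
The strategy is to work at the level of fraction fields throughout, constructing a $G$-Galois extension of $\Frac(R[[T]])$ whose reduction modulo $p$ is birationally $\Frac(A)/k((t))$; the integral closure $A_R$ is then automatically finite over $R[[T]]$ and furnishes the desired birational lift. The flexibility of working birationally, rather than demanding that $A_R$ be isomorphic to a power series ring, is exactly what makes the problem tractable: one no longer needs to control the special fiber integrally, so the sort of obstructions discussed in \S\ref{Sobstructions} are sidestepped.

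First, I would reduce to the wild case. By the structure result in \S\ref{Sdvrextensions}, $G = P \rtimes \ints/m$ with $P$ a $p$-group and $\gcd(p, m) = 1$. The tame sub-extension corresponding to the $\ints/m$-quotient lifts uniquely over $W(k)$ by Kummer theory (the local analogue of Proposition \ref{Ptame}), so it suffices to produce a birational lift of the wild $P$-extension and then combine. Choose a composition series of $P$, expressing $\Frac(A)$ over the tame intermediate field as a tower of $\ints/p$-extensions. Each such extension is Artin-Schreier, given by an equation $y^p - y = f$ for some $f$ in the intermediate field. The classical Artin-Schreier-to-Kummer construction lifts this layer to a degree-$p$ Kummer extension $Z^p = 1 + p\tilde{f}$ over a suitable finite extension of $R[[T]]$ containing $p$th roots of unity, via the substitution $Z = 1 + py$. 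Composing the lifts of successive layers produces a degree-$|P|$ field extension $M$ of $\Frac(R[[T]])$ whose reduction modulo $p$ recovers $\Frac(A)/k((t))$ birationally.

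The main obstacle is to ensure that $M/\Frac(R[[T]])$ is genuinely $G$-Galois (and not merely a compositum of cyclic pieces) with an action that reduces to the prescribed $G$-action on $\Frac(A)$. For abelian $P$ this is routine, but for non-abelian $P$ one must verify that the lifted actions of the successive quotients combine into a faithful action of $G$ respecting the group extension data. Garuti handles this using rigid/formal geometric patching (in the spirit of Appendix \ref{Sgeometry}): view $\Spf R[[T]]$ as a formal disc and $\Spec R((T))$ as its boundary, where a $G$-torsor refining the extension lives. The surjectivity of the natural map from $G$-torsors over $\Spec R((T))$ to $G$-torsors over $\Spec k((t))$ follows because the relevant non-abelian $H^2$-obstruction vanishes for dimension reasons (after inverting $p$, one is working over a ring of dimension one). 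Thus any $G$-torsor downstairs genuinely lifts with its group structure intact.

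Once $M$ has been constructed as a $G$-Galois extension of $\Frac(R[[T]])$ with the correct reduction, let $A_R$ be the integral closure of $R[[T]]$ in $M$. Since $R[[T]]$ is excellent and $M/\Frac(R[[T]])$ is finite and separable, $A_R$ is module-finite over $R[[T]]$; since $M$ is a field, $A_R$ is an integrally closed integral domain. By construction, $\Frac(A_R \otimes_R k) \cong \Frac(A)$ as $G$-extensions of $k((t))$, so the normalization of $A_R \otimes_R k$ is $A$ with the prescribed $G$-action. Hence $A_R/R[[T]]$ is a birational lift of $A/k[[t]]$.
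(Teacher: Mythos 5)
The decisive gap is in your third paragraph, which is exactly where the entire content of Garuti's theorem lives. You acknowledge that the lifted cyclic layers must be assembled into a genuine $G$-Galois extension with the prescribed reduction, but the justification you offer --- that ``the relevant non-abelian $H^2$-obstruction vanishes for dimension reasons (after inverting $p$, one is working over a ring of dimension one)'' --- does not work. The embedding problems that arise layer by layer (already for $G \cong \ints/p^2$: the naive compositum of two Kummer layers is in general not even Galois over $\Frac(R[[T]])$, let alone cyclic of order $p^2$; this is why Green--Matignon and Sekiguchi--Suwa need such delicate equations) are governed by the Galois cohomology of the field $\Frac(R[[T]])$, whose absolute Galois group contains $\Gal(\ol{K}/K)$ and is far from having cohomological dimension $\leq 1$; the Krull dimension of the ring $R[[T]][1/p]$ is not the relevant invariant. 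Worse, even an abstract solution of the embedding problem over the generic fibre gives no control of the reduction: for the birational lifting problem you must know that the special fibre of the integral closure $A_R$ is integral with fraction field $\Frac(A)$ and that the induced $G$-action restricts to the given one, and a torsor over $\Spec R((T))$ or over $\Frac(R[[T]])$ carries no such information (e.g.\ $Z^p = T$ is a perfectly good degree-$p$ torsor on the generic fibre whose reduction is purely inseparable). The correct control point is the boundary ring $R[[T]]\{T^{-1}\}$, a complete DVR with residue field $k((t))$, over which the given $G$-extension lifts uniquely to an unramified $G$-extension by Henselianness; the genuinely hard step is to extend that cover of the boundary across the open disc, after a finite extension of $W(k)$ and at the cost of a possibly singular special fibre. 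That step is precisely \cite[Corollaire 2.14]{Ga:pr}, which your cohomological claim silently replaces without proof (cf.\ Remark \ref{Rconvoluted}: ``most of the real work is contained in'' that corollary).

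For comparison, the paper's argument does not attempt a layer-by-layer construction at all: it globalizes via the Katz--Gabber cover, lifts the tame part of the cover of $\proj^1$ by Grothendieck's tame lifting theory, and then invokes Garuti's extension theorem to fill in the disc; the closest existing result to your tower strategy is Sa\"{i}di's refinement (Remark \ref{Rtowers}), and it too is built on Garuti rather than on an obstruction-vanishing argument. A smaller point: your explicit substitution $Z = 1 + py$ with $Z^p = 1 + p\tilde{f}$ does not reduce to the Artin--Schreier equation; one must use $Z = 1 + \lambda Y$ and $Z^p = 1 + \lambda^p \tilde{f}$ with $\lambda = \zeta_p - 1$, as in the proof of Theorem \ref{Tzplifting}, since the inequality $v(\lambda^{p-1} + p) > 1$ is what makes the reduction come out as $y^p - y = f$.
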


\begin{proof}[Sketch of proof]
If $A/k[[t]]$ is a $G$-extension, then by \cite[Theorem 1.4.1]{Ka:lg}, there is a $G$-cover $f: Y \to X := \proj^1_k$ such that $f$ is totally
ramified at a point $y \in Y$, and tamely ramified away from $y$.  The extension of complete local rings associated to $y \mapsto f(y)$ is isomorphic to $A/k[[t]]$.  Write
$U = X \backslash \{f(y)\}$ and $V = Y \backslash \{y\}$.   
Let $\mc{X}$ be the formal completion of $\proj^1_{W(k)}$ at its special fiber $X$, let $\mc{U}$ be the natural formal lift of $U$ inside $\mc{X}$
(see \S\ref{Sformal}), and let $\mc{D}$ be the formal open disc $\mc{X} \backslash \mc{U}$.  Thus $\mc{D} \cong \Spf W(k)[[T]]$.  

By Grothendieck's theory of tame lifting (\cite[XIII, Corollaire 2.12]{sga1}), the $G$-cover $f: V \to U$ 
lifts to a tame $G$-cover $\mc{F}: \mc{V} \to \mc{U}$ of formal schemes.  Applying \cite[Corollaire 2.14]{Ga:pr} shows that there exists
$R/W(k)$ finite such that $\mc{F}$, when base changed to $R$, extends to a $G$-cover $\mc{Y} \to \mc{X}$ of normal formal schemes,
where the special fiber of $\mc{Y}$ is possibly singular.
The equation of this $G$-cover over $\mc{D}$ is a $G$-extension $A_R/R[[T]]$ that is a birational lift of $A/k[[t]]$.
\end{proof}

\begin{remark}\label{Rconvoluted}
Of course, most of the real work is contained in \cite[Corollaire 2.14]{Ga:pr}.  For an overview of this corollary that uses less group theory than 
\cite{Ga:pr}, see \cite[Ch.\ 4]{Br:ac}, especially section 1.
The argument given above is somewhat artificial, as it is not actually necessary to globalize the problem by building the branched cover $f$.  
However, in \cite{Ga:pr}, the viewpoint is global, so we globalize in order to cite it.
\end{remark}

\begin{remark}\label{Rtowers}
Sa\"{i}di has refined the arguments of \cite{Ga:pr} to show that the birational local lifting problem is solvable in \emph{towers}.  More
specifically, let $B/k[[t]]$ be a $G$-Galois extension, let $\Gamma$ be a normal subgroup of $G$, and let $A = B^{\Gamma}$.  
If $A_R/R[[T]]$ is a birational lift of $A/k[[t]]$, then there is a finite extension $R'/R$ and a birational lift $B_{R'}/R'[[T]]$ of $B/k[[t]]$ 
such that $(B_{R'})^{\Gamma} \cong A_R \otimes_R R'$.  See
\cite[Theorem 2.5.5]{Sa:fl}, where birational lifts are called ``Garuti lifts." 
\end{remark}

The following criterion is extremely useful for seeing when a birational lift is actually a lift (i.e., when $A_R \otimes_R k$ is already integrally 
closed, thus isomorphic to $A$, in the language of Question \ref{Qbirational}).

\begin{prop}[The different criterion]\label{Pdifferent}
Suppose $A_R/R[[T]]$ is a birational lift of the $G$-Galois extension $A/k[[t]]$.  Let $K = \Frac(R)$, let $\delta_{\eta}$ be the degree of the different 
of $(A_R \otimes_R K)/(R[[T]] \otimes_R K)$, and let $\delta_s$ be the degree of the different of $A/k[[t]]$.  Then $\delta_s \leq \delta_{\eta}$, and 
equality holds if and only if $A_R/R[[T]]$ is a lift of $A/k[[t]]$. 
\end{prop}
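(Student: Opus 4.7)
My plan is to realize both $\delta_\eta$ and $\delta_s$ as the degrees of a single Fitting ideal attached to the flat family $A_R/R[[T]]$, and then to account for any discrepancy on the special fiber via the normalization inclusion $B := A_R \otimes_R k \hookrightarrow A$. Since $R[[T]]$ is a regular $2$-dimensional local ring (in fact Henselian, being complete) and $A_R$ is its integral closure in a finite field extension, $A_R$ is a finite local $R[[T]]$-algebra which is a $2$-dimensional normal domain, hence Cohen--Macaulay; miracle flatness then gives that $A_R$ is a free $R[[T]]$-module of rank $|G|$. Consequently the coherent $A_R$-module $\Omega^1_{A_R/R[[T]]}$ and its initial Fitting ideal $\mathfrak{d} \subseteq A_R$ (the relative different) commute with arbitrary base change on $R$.

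Writing the discriminant $\text{disc}(A_R/R[[T]]) = N_{A_R/R[[T]]}(\mathfrak{d})$ as a principal ideal $(D(T)) \subseteq R[[T]]$, its multiplicity at $(T)$ on the generic fiber is $\delta_\eta$---here I use that the Garuti-type birational lift of Theorem \ref{Tbirational} is \'etale outside the boundary of the formal disc, so that all ramification of $A_R/R[[T]]$ is horizontal above $T = 0$---while $D \bmod \pi$ is a nonzero element of $k[[t]]$ (since $B$ is generically separable over $k[[t]]$) whose multiplicity at $(t)$ is the degree $\delta_B$ of the different of $B/k[[t]]$. Hence $\delta_\eta = \delta_B$. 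Next, I compare $\delta_B$ with $\delta_s = \delta_A$ after localizing at the unique prime of $B$ above $t = 0$, so that $B \hookrightarrow A$ becomes an inclusion of $1$-dimensional complete local domains with common fraction field and with $A$ a DVR. The trace-dual description
\[ \mathfrak{d}_{B/k[[t]]}^{-1} = \{x \in \Frac(B) : \trace_{\Frac(B)/k((t))}(xB) \subseteq k[[t]]\}, \]
and its analog for $A$, yields the fractional-ideal inclusion $\mathfrak{d}_{A/k[[t]]}^{-1} \subseteq \mathfrak{d}_{B/k[[t]]}^{-1}$ in $\Frac(A)$, with equality if and only if the two trace pairings agree, i.e., if and only if $B = A$. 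The classical conductor--different formula then translates this into $\delta_B \geq \delta_A$, equality iff $B = A$. Assembling: $\delta_\eta = \delta_B \geq \delta_A = \delta_s$, with equality iff $A_R \otimes_R k$ is already normal, i.e., iff $A_R/R[[T]]$ is a lift.

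The main obstacle is the middle step: controlling that the discriminant of $A_R/R[[T]]$ is a horizontal divisor above $(T)$, so that its reduction mod $\pi$ is nonzero and has the expected multiplicity. This depends on the concrete Garuti-type construction exhibiting $A_R/R[[T]]$ as \'etale outside the disc, which rules out any vertical component of the discriminant supported along the special fiber. Once this is in place, the normalization comparison in the final step is a classical conductor--different computation.
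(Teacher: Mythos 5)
The decisive problem is your middle step, the claimed equality coming from ``all ramification of $A_R/R[[T]]$ is horizontal above $T=0$.'' First, the proposition concerns an \emph{arbitrary} birational lift: it is precisely the tool used later to test whether some explicitly constructed extension of $R[[T]]$ is an actual lift (Theorem \ref{Tzplifting}, Proposition \ref{Pbirationalactual}), so you cannot import features of the particular Garuti construction of Theorem \ref{Tbirational} into its proof. Second, and worse, the geometric claim is false in every wild case: if the generic fiber $(A_R\otimes_R K)/(R[[T]]\otimes_R K)$ were ramified only over $T=0$, then, ramification in characteristic zero being tame, one would have $\delta_\eta\le |G|-1$, whereas $\delta_s>|G|-1$ whenever $A/k[[t]]$ is wildly ramified (Remark \ref{Rwildhigher}); that contradicts the very inequality $\delta_s\le\delta_\eta$ you are proving. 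Concretely, the lift of a $\ints/p$-extension in Theorem \ref{Tzplifting} is branched at $u_1+1$ points of the open disc. So the multiplicity of your discriminant $D$ at the single prime $(T)$ is not $\delta_\eta$, and the key identity $\delta_\eta=\delta_B$ is not established by your argument.

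The correct mechanism, and the paper's route, is Weierstrass preparation applied to the discriminant of the free $R[[T]]$-algebra $A_R$ (the image of the trace pairing on $\det(A_R/R[[T]])$): it is generated by $\pi^nP(T)$ with $P$ distinguished; over $K$ its degree is $\deg P=\delta_\eta$, collecting the contributions of \emph{all} branch points in the open disc; and because $P$ is distinguished its reduction is a unit times $t^{\deg P}$ --- the branch points need not lie at $T=0$, they merely all specialize to $t=0$, which is exactly what distinguishedness encodes. The nonvanishing of the reduced discriminant (your $D\bmod\pi\ne 0$, equivalently the normalization by $\pi^{-n}$ in the paper) comes from the generic separability of $A_R\otimes_R k$, which is built into the definition of a birational lift, not from the absence of vertical components of a branch divisor. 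Your first and last steps are sound and coincide with the paper's: freeness of $A_R$ over $R[[T]]$, and the conductor/normalization comparison between $B=A_R\otimes_R k$ and $A$ (the paper cites \cite[III, Prop.\ 5]{Se:lf}), giving $\delta_B=\delta_s+2\,\operatorname{length}(A/B)\ge\delta_s$ with equality iff $B=A$, i.e.\ iff the birational lift is a lift. A smaller caveat: identifying the different with the Fitting ideal of $\Omega^1_{A_R/R[[T]]}$ and writing $\operatorname{disc}=N(\mathfrak{d})$ requires $A_R$ to be monogenic (or at least a complete intersection) over $R[[T]]$, which is not known here; working directly with the trace form, as the paper does, sidesteps this entirely.
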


\begin{proof}[Sketch of proof](cf.\ \cite[I, 3.4]{GM:lg})
For any free module $C/B$ of rank $r$, write $\det(C/B) = \bigwedge^r_B(C)$.
Consider the trace map 
$$\tau: \det(A_R/R[[T]]) \times \det(A_R/R[[T]]) \to R[[T]]$$ induced by
the map $A_R \times A_R \to R[[T]]$ given by $(x, y) \mapsto \trace_{A_R/R[[T]]}(xy).$  If $\pi$ is a uniformizer of $R$, then by the Weierstrass 
Preparation 
Theorem, the image of $\tau$ in $R[[T]]$ is generated by $\pi^nP(T)$, with $n \geq 0$ and $P(T)$ a distinguished polynomial 
(i.e., $P$ has a unit leading coefficient, and all other coefficients are in the maximal ideal of $R$).  Tensoring the map $\tau$ with $K$
shows that $\delta_{\eta} = \dim_{R[[T]] \otimes K}(\coker(\tau \otimes_R K)) = \deg P$.  But setting $\tau' = (\pi^{-n}\tau) \otimes_R k$, we see that the 
image of 
$$\tau': \det((A_R \otimes_R k)/k[[t]]) \times \det((A_R \otimes_R k)/k[[t]]) \to k[[t]]$$ is generated by the reduction of $P(T)$ to characterstic $p$, 
so $\dim_{k[[t]]}(\coker \tau') = \deg P = \delta_{\eta}$.

Let $\tilde{\tau}$ be the trace map $\det(A/k[[t]]) \times \det(A/{k[[t]]}) \to k[[t]]$.  Then $\dim_{k[[t]]}(\coker \tilde{\tau}) = \delta_s$.
Since $A$ is the integral closure of $A_R \otimes_R k$, \cite[III, Proposition 5]{Se:lf} shows that $\delta_s \leq \delta_{\eta}$, with equality
if and only if $A_R \otimes_R k \cong A$.
\end{proof}

\begin{remark}\label{Rfakelifting}
In \cite{Sa:fl}, Sa\"{i}di introduces the idea of a \emph{fake lifting} of a $G$-extension.  A fake lifting of a $G$-extension $A/k[[t]]$ is a birational lift that is
not an actual lift, but such that the degree of the different on the generic fiber is minimal among birational lifts.  
By Proposition \ref{Pdifferent}, the extension $A/k[[t]]$
lifts to characteristic zero iff it has no fake liftings.  Thus, one can try to answer a lifting problem in the affirmative by showing that the properties
that fake liftings must satisfy are so restrictive that they cannot possibly simultaneously hold.  
In \cite{Sa:fl}, this is applied to give a proof that $\ints/p$ is a local Oort group, and
also to show that certain $\ints/p^2$-extensions lift to characteristic zero (these results were already known---see \S\ref{Scyclic}).
\end{remark}

\section{Obstructions to lifting}\label{Sobstructions}
In \S\ref{Slifting}, we discussed some obstructions to the (global) lifting problem.  In this section, we will discuss the 
\emph{Katz-Gabber-Bertin obstruction}, or \emph{KGB obstruction} (\cite{CGH:ll}), which is the most effective way of showing that a particular local 
lifting problem does not have a solution.   

\subsection{The KGB obstruction}\label{SKGB}
Let $A/k[[t]]$ be a $G$-extension, where $G \cong P \rtimes \ints/m$, with $P$ a $p$-group and $p \nmid m$.  A theorem of Katz and Gabber
(\cite[Theorem 1.4.1]{Ka:lg}) states that there exists a unique $G$-cover $Y \to \proj^1_k$ that is \'{e}tale outside $t \in \{0, \infty\}$, tamely ramified of 
index $m$ above $t = \infty$, and
totally ramified above $t = 0$ such that the extension of complete local rings at $t=0$ is given by $A/k[[t]]$.  This is called the 
\emph{Katz-Gabber cover associated to $A/k[[t]]$}.  By Theorem \ref{Tlocalglobal} and
Proposition \ref{Pprimetop} (which does not depend on anything in this section), 
the $G$-cover $f: Y \to \proj^1_k$ lifts to characteristic zero iff the extension $A/k[[t]]$ does.  

Let $R/W(k)$ be finite, and let $K = \Frac(R)$.  Suppose $f_R: Y_R \to \proj^1_R$ is a lift of $f$ to characteristic zero.  
Since genus is constant in flat families, the genus of $Y$ is equal to that of $Y_K := Y_R \times_R K$.  
Furthermore, if $H \leq G$ is any subgroup, then $Y_R/H$ is a lift of $Y/H$ with generic fiber $Y_K/H$, and the genus of $Y/H$ is equal to that of
$Y_K/H$.  

\begin{definition}\label{DKGB}
Let $A/k[[t]]$ be a $G$-extension with associated Katz-Gabber $G$-cover $Y \to \proj^1_k$.  Then the \emph{KGB obstruction 
vanishes for $A/k[[t]]$} if there exists a $G$-cover $X \to \proj^1$ over a field of characteristic zero such that, for each subgroup $H \subseteq G$, the
genus of $Y/H$ is equal to the genus of $X/H$.  Equivalently (by the Hurwitz formula), the degree of the ramification divisor of $Y/H \to \proj^1_k$ is 
equal to that of $X/H \to \proj^1$ for each subgroup $H \subseteq G$.  
Equivalently, the degree of the ramification divisor of $Y \to Y/H$ is equal to that of $X \to X/H$ for each subgroup $H \subseteq G$.
\end{definition} 

Clearly, if $A/k[[t]]$ lifts to characteristic zero, its KGB obstruction must vanish.  If $G \cong P \rtimes \ints/m$ as above and
the KGB obstruction vanishes for \emph{all} $G$-extensions $A/k[[t]]$, then $G$ is called a \emph{KGB group} for $k$.

The following classification of all the KGB groups is due to Chinburg, Guralnick, and Harbater.

\begin{theorem}[\cite{CGH:ll}, Theorem 1.2]\label{TKGB}
The KGB groups for $k$ consist of the cyclic groups, the dihedral group $D_{p^n}$ for any $n$, the group $A_4$ (for $\cf(k) = 2$),
and the generalized quaternion groups $Q_{2^m}$ of order $2^m$ for $m \geq 4$ (for $\cf(k) = 2$).
\end{theorem}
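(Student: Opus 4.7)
The theorem has two directions to prove: each listed group is a KGB group, and no other group is. My plan is to first extract a combinatorial reformulation of the KGB condition, then address each direction in turn.

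For the reformulation, fix a $G$-extension $A/k[[t]]$ with Katz--Gabber cover $Y \to \proj^1_k$. By the Hurwitz formula, matching $g(Y/H) = g(X/H)$ for every $H \leq G$ is equivalent to matching $\deg(\text{ram}(Y \to Y/H))$ with $\deg(\text{ram}(X \to X/H))$ for every $H$. Applying (\ref{Ebasicdifferent}) to the subextension cut out by $H$, the left-hand side splits as a contribution $\sum_{i \geq 0}(|G_i \cap H| - 1)$ from the totally ramified point over $t = 0$, plus a tame contribution from the $G$-orbit over $\infty$ with cyclic inertia of order $m$. In characteristic zero, every inertia group at a branch point of a $G$-cover is cyclic, so the ramification data of $X \to \proj^1$ is encoded by a multiset $\{(C_j, n_j)\}$ of pairs of conjugacy classes of cyclic subgroups $C_j \leq G$ with multiplicities $n_j \in \nats$. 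Then $\deg(\text{ram}(X \to X/H))$ becomes a linear function $\sum_j n_j \phi_{C_j}(H)$ in the $n_j$, where $\phi_{C_j}(H)$ depends only on how $H$ intersects conjugates of $C_j$. Thus the KGB obstruction vanishes for $A/k[[t]]$ iff this linear system admits a nonnegative integer solution compatible with the Riemann--Hurwitz constraint for a cover of $\proj^1$ (in particular, the $C_j$ with $n_j > 0$ must generate $G$ up to conjugation).

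For the ``if'' direction, I would treat each family separately. The cyclic case is essentially immediate: a single totally ramified point with inertia all of $G$ matches. For $D_{p^n}$, one constructs a characteristic zero $D_{p^n}$-cover of $\proj^1$ with a branch point of cyclic inertia $\ints/p^n$ together with appropriately placed $\ints/2$-inertia points, then verifies the matching by direct computation across the subgroup lattice. The cases $A_4$ (for $\cf(k) = 2$) and $Q_{2^m}$ (for $\cf(k) = 2$) demand specific characteristic zero constructions that exploit the particular subgroup lattices: in $A_4$ the Klein $4$-subgroup plays a special role, while in $Q_{2^m}$ one leans on the unique central $\ints/2$ and the fact that all proper subgroups are cyclic.

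For the ``only if'' direction, given $G$ not in the list, my plan is to exhibit a single $G$-extension $A/k[[t]]$ whose associated linear system has no nonnegative integer solution. A natural first attempt is the extension with the smallest possible upper jump, since large wild contributions tend to force large multiplicities $n_j$ that then clash with inequalities coming from subgroups of small index; alternatively, one can choose jumps so that a particular subgroup $H$ produces an overdetermined equation. I expect the main obstacle to be carrying out this analysis uniformly across all $G = P \rtimes \ints/m$ not on the list: one must rule out every remaining $p$-group structure on $P$ together with every admissible action of $\ints/m$. This ultimately reduces to a delicate group-theoretic classification of which subgroup lattices can support a matching by conjugacy classes of cyclic subgroups, and this step is where the heart of the proof lies.
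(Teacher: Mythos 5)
First, note that the paper does not prove this statement at all: it is quoted verbatim from \cite{CGH:ll} (Theorem 1.2 there), and the actual proof is a long group-theoretic classification carried out in that reference. So there is no in-paper argument to compare against; what can be assessed is whether your outline would plausibly yield the theorem, and it would not in its current form. Your combinatorial reformulation of the vanishing of the KGB obstruction (matching $\sum_{i\ge 0}(|G_i\cap H|-1)$ plus the tame contribution against a linear expression in multiplicities of conjugacy classes of cyclic inertia, subject to realizability of a $G$-cover of $\proj^1$ in characteristic zero) is the right starting point and is exactly the kind of computation the paper carries out in special cases in Propositions \ref{Pzpzp} and \ref{Pzpzm}. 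But beyond that the proposal is a plan, not a proof: for $D_{p^n}$, $A_4$, and $Q_{2^m}$ you only assert that suitable characteristic-zero covers can be constructed and verified, and for the ``only if'' direction you explicitly defer the decisive step (ruling out every $P\rtimes\ints/m$ not on the list), acknowledging that ``this step is where the heart of the proof lies.'' That heart is precisely the content of \cite{CGH:ll}; without it nothing is proved.

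There is also a concrete error in the one case you do claim to settle. For cyclic $G$ you say ``a single totally ramified point with inertia all of $G$ matches.'' It does not: by Remark \ref{Rwildhigher}, the degree of the different of a wildly ramified $G$-extension of $k[[t]]$ is strictly greater than $|G|-1$, whereas a characteristic-zero $G$-cover totally ramified at a single point contributes exactly $|G|-1$ to the ramification divisor for $H=G$. Already for $G=\ints/p$ with upper jump $u_1$ the characteristic-$p$ side has degree $(u_1+1)(p-1)$, so one needs $u_1+1$ branch points of index $p$ in characteristic zero, and in general the number and inertia of the branch points must be chosen as explicit functions of the upper jumps, subgroup by subgroup, exactly as in the proofs of Propositions \ref{Pzpzp} and \ref{Pzpzm}. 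So even the ``easy'' inclusion requires a jump-dependent construction, and the full theorem requires, in addition, the delicate classification from \cite{CGH:ll} that your proposal leaves untouched.
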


\begin{remark}
There is another obstruction, called the \emph{Bertin obstruction} (\cite{Be:ol}), 
that was proven in \cite{CGH:ll} to be strictly weaker than the KGB obstruction.
Since it is also more difficult to describe, we will not discuss it here.
\end{remark}

Since any local Oort group must be a KGB group, one can ask which of the KGB groups are local Oort groups.  
The generalized quaternion groups were shown \emph{not} to be local Oort groups in \cite{BW:ac}.  The obstruction developed in \cite{BW:ac} 
to show this is called the \emph{Hurwitz tree obstruction}, and has to do with the non-existence of a kind of generalized Hurwitz tree
(for basics on Hurwitz trees, see \S\ref{Shurwitz}).

The group $A_4$ was announced to be a local Oort group in \cite{BW:ll}. 
The group $D_p$ was shown to be a local Oort group by Bouw and Wewers
(\cite{BW:ll}) for $p$ odd and by Pagot (\cite{Pa:ev}) for $p=2$.  We will discuss dihedral groups more in \S\ref{Smetacyclic}.  

For cyclic groups, a major guiding problem in the field is the \emph{Oort conjecture}.
\begin{conjecture}[Oort Conjecture]\label{Coort}
Any cyclic group is a local Oort group for any field $k$.
\end{conjecture}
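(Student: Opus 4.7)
The plan is to proceed by induction on $n$, reducing the conjecture to the essential case $G \cong \ints/p^n$. (The tame direct factor of a cyclic group may be peeled off using Proposition \ref{Ptame} together with the local--global principle, Theorem \ref{Tlocalglobal}: for $G \cong \ints/p^n \times \ints/m$ with $p \nmid m$, one first takes invariants under the canonical $\ints/m$ subgroup to obtain a $\ints/p^n$-Galois subextension, lifts that, and recombines.) The base case $n=1$ is known: $\ints/p$ is a local Oort group, as in Sa\"{i}di's fake-lifting argument alluded to in Remark \ref{Rfakelifting}. The entire substance of the conjecture lies in the inductive step.

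So assume every $\ints/p^{n-1}$-extension of $k[[t]]$ lifts to characteristic zero, and fix a $\ints/p^n$-extension $A/k[[t]]$ with upper ramification jumps $u_1 < \cdots < u_n$. Set $B = A^{\ints/p}\subset A$; then $B/k[[t]]$ is $\ints/p^{n-1}$-Galois, so by hypothesis it admits a lift $B_R/R[[T]]$ for some finite $R/W(k)$. Applying Sa\"{i}di's tower refinement (Remark \ref{Rtowers}) of Garuti's theorem (Theorem \ref{Tbirational}) to the tower $k[[t]] \subset B \subset A$, after enlarging $R$ I obtain a birational lift $A_R/R[[T]]$ of $A/k[[t]]$ whose invariants under $\ints/p$ recover (the base change of) $B_R$. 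By the different criterion (Proposition \ref{Pdifferent}), $A_R/R[[T]]$ is an actual lift if and only if $\delta_s = \delta_\eta$; using Lemma \ref{Ljumpsdifferent} on both fibers together with the already-matched differents for $B_R$, this collapses to forcing the top upper jump on the generic fiber to equal $u_n$.

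To engineer this I would follow the Wewers--Obus strategy outlined in \S\ref{Szpnlifting}. Pass to the stable model of the Katz--Gabber $\ints/p^n$-cover attached to $A_R$; its special fiber is a tree of $\proj^1$'s along which the auxiliary branch points of the generic fiber distribute themselves, each component carrying compatible \emph{deformation data} in the sense of Appendix \ref{Sstablegraph}. The upper jumps of $A/k[[t]]$ can be read off combinatorially from the positions and depths of these components together with the differential residues of the deformation data. The lift $B_R$ fixes the bottom $n-1$ levels of the tree, and the inductive task is to attach a single additional level carrying a configuration of branch points whose deformation data realize exactly the jump $u_n$ and glue consistently with the data inherited from below.

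The hard part will be the simultaneous solvability of this gluing problem. On each component of the tree one must prescribe branch-point positions and residues satisfying ODE-like compatibility relations coming from the deformation data, and these local choices must be reconciled on the edges joining components, producing a coupled system of algebraic constraints. For $n=2$ (\cite{GM:lg}) and $n=3$ (\cite{OW:oc}, together with the earlier contribution of \cite{OSS:ask}) this system has been shown to be solvable by fairly explicit constructions, but both the combinatorial complexity of the tree and the algebro-geometric moduli of admissible configurations grow rapidly with $n$. Proving, uniformly in $n$ and in the upper-jump sequence $u_1 < \cdots < u_n$, that an admissible Hurwitz tree with the prescribed jumps and compatible deformation data always exists is, to my knowledge, the principal obstacle blocking a complete proof of Conjecture \ref{Coort}.
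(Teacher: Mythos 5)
You are attempting to prove Conjecture \ref{Coort}, which the paper states as an \emph{open conjecture} and does not prove; the paper only records partial progress (the cases $v_p(|G|) \leq 2$ via \cite{OSS:ask} and \cite{GM:lg}, and $v_p(|G|) \leq 3$, plus the conditional Theorem \ref{Tzpnlifting}, via \cite{OW:oc}). Your proposal is likewise not a proof, as you yourself concede in the final sentence: the uniform solvability of the inductive step is exactly the open content of the conjecture, so there is a genuine gap and it is the whole theorem. Within the sketch, the step that fails as written is the passage from Sa\"{i}di's tower theorem (Remark \ref{Rtowers}) to an actual lift via the different criterion: the tower theorem produces \emph{some} birational lift of $A/k[[t]]$ compatible with $B_R$, but gives no control whatsoever on the generic different $\delta_\eta$ — this is precisely the phenomenon of fake liftings (Remark \ref{Rfakelifting}). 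Moreover the generic fiber is tamely ramified, so there is no ``top upper jump'' on it to force; what must be engineered is the number and position of the branch points of the generic fiber, and arranging that is the entire difficulty, not a formality that ``collapses'' out of Lemma \ref{Ljumpsdifferent}. (Also, \cite{OSS:ask} is the $v_p(|G|)=1$ case, not an ingredient of the $p^3$ case; the paper's base case is Theorem \ref{Tzplifting}, with Sa\"{i}di's argument only an alternative proof.)

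Comparing with the paper's actual machinery: your reduction to cyclic $p$-groups matches Proposition \ref{Preducetop} in spirit (quotient covers, normalized fiber product, different criterion), but your description of the inductive strategy does not match \S\ref{Szpnlifting}. The Obus--Wewers approach does not construct or lift Hurwitz trees for the Katz--Gabber cover; indeed Remark \ref{Rbighurwitz} points out that the Hurwitz-tree gluing method breaks down as soon as the cyclic $p$-Sylow has order greater than $p$, for lack of an analogue of Lemma \ref{Lboundarystructure}. Instead one writes the lift in the explicit Kummer form (\ref{Egenform}), uses Proposition \ref{Pbirationalactual} to reduce to degree and branch-locus constraints on the polynomials $H_i$, controls separability of the reduction via the depth function (Lemma \ref{Ldepthslope}) and the smoothing result Proposition \ref{Psmoothing}, and then deforms $H_n$ to reach an arbitrary top jump (Propositions \ref{Preplace} and \ref{Pgeneralextend}). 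The genuine remaining obstruction is not ``existence of an admissible Hurwitz tree with prescribed jumps'' but the valuation condition on the zeroes of $H_1,\ldots,H_{n-1}$ needed to restart the induction, which Lemma \ref{Lbounds} guarantees only under the numerical hypothesis appearing in Theorem \ref{Tzpnlifting}. So your identification of where the difficulty lies is in the right spirit, but the mechanism you describe is the one the paper explains cannot be extended, rather than the one that yields the known partial results.
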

If the cyclic group $G$ has $v_p(|G|) = 0$ (resp.\ $v_p(|G|) = 1$, resp.\ $v_p(|G|) = 2$), then $G$ is a local Oort group by Proposition 
\ref{Pprimetop} (resp.\ \cite{OSS:ask}, resp.\ \cite{GM:lg}).  These results, along with the Oort conjecture in general, will be discussed in detail in
\S\ref{Scyclic}.  We note that the Hurwitz tree obstruction to the Oort conjecture was shown to vanish in \cite[Ch.\ 4]{Br:rt}.

We also have the following strengthening of the Oort conjecture:

\begin{conjecture}[Strong Oort Conjecture]\label{Cstrongoort}
If $G$ is a cyclic group, then any $G$-extension $A/k[[t]]$ lifts over $W(k)[\zeta_{|G|}]$, where $\zeta_{|G|}$ is a primitive $|G|$th
root of unity.
\end{conjecture}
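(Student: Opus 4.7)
The plan is to reduce to the case where $G$ is a cyclic $p$-group, and then to induct on $n = v_p(|G|)$, following the Obus--Wewers Hurwitz-tree strategy sketched in \S\ref{Szpnlifting} while tracking the field of definition carefully at each stage.

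For the reduction, write $G = P \times C$ with $P$ cyclic of $p$-power order and $C$ cyclic of order prime to $p$. A $C$-extension of $k[[t]]$ has the form $k[[u]]/k[[t]]$ with $t = u^{|C|}$, and this lifts canonically over $W(k)[\zeta_{|C|}]$ via $T = U^{|C|}$. Given a general $G$-extension $A/k[[t]]$, set $B = A^P$, so that $B/k[[t]]$ is a $C$-extension. Using Sa\"{\i}di's tower refinement (Remark \ref{Rtowers}) together with the different criterion (Proposition \ref{Pdifferent}), it suffices to lift the $P$-extension $A/B$ over $W(k)[\zeta_{|P|}]$ and then glue it $C$-equivariantly to the tame lift of $B/k[[t]]$, producing a lift over $W(k)[\zeta_{|P|}\cdot\zeta_{|C|}] = W(k)[\zeta_{|G|}]$.

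For the induction, the base case $n = 1$ is Oort--Sekiguchi--Suwa, whose construction produces a lift already over $W(k)[\zeta_p]$. For $n \geq 2$, let $A/k[[t]]$ be a $\ints/p^n$-extension, let $A^{\prime} = A^{\ints/p}$ denote its $\ints/p^{n-1}$-subextension, and let $A^{\prime}_R/R[[T]]$ be a lift of $A^{\prime}/k[[t]]$ over $R = W(k)[\zeta_{p^{n-1}}]$ furnished by the inductive hypothesis. Base-changing to $R^{\prime} = W(k)[\zeta_{p^n}]$, we seek a $\ints/p^n$-extension $A_{R^{\prime}}/R^{\prime}[[T]]$ whose $\ints/p^{n-1}$-subextension is $A^{\prime}_R \otimes_R R^{\prime}$ and whose reduction is $A/k[[t]]$. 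By Proposition \ref{Pdifferent} and Lemma \ref{Ljumpsdifferent}, the problem reduces to matching the top upper jump of $A/k[[t]]$ exactly. As in \S\ref{Szpnlifting}, one would encode such a lift by a Hurwitz tree of $\proj^1$'s together with equivariant deformation data, i.e., meromorphic differentials on the components whose pole orders and residues are prescribed by the lower upper jumps (already determined by $A^{\prime}_R$) and by the desired top jump.

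The main obstacle will be showing that this deformation datum can be chosen over $W(k)[\zeta_{p^n}]$ rather than over a strictly larger finite extension. Constructing it amounts to solving a system of algebraic equations with coefficients in $R^{\prime}$, whose solutions could a priori generate extra scalars. To avoid this, one would need a rigidity statement of the following form: the deformation datum is uniquely determined, up to the action of $\Aut(A/A^{\prime}) \cong \ints/p$ and the relevant tree automorphisms, by its prescribed pole orders and residues, and these are themselves $\Gal(\ol{K}/R^{\prime})$-invariant, because the $\ints/p^n$-action is compatible with the Teichm\"{u}ller character with values in $\mu_{p^n} \subseteq R^{\prime}$. Establishing this rigidity in the presence of many Hurwitz-tree components---where the residues at the nodes must satisfy a global balance condition that can force independently solved components to lie in different extensions of $R^{\prime}$---is where the core difficulty lies, and the expected strategy is to chain the components from the boundary inward so that each new component is determined by the already-fixed data on its neighbors.
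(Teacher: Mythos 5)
The statement you are trying to prove is stated in the paper as a \emph{conjecture} (the Strong Oort Conjecture), and the paper does not prove it; it only records that it is known for $v_p(|G|) \leq 2$ (\cite{OSS:ask}, \cite{GM:lg}), and Remark \ref{Rcyclic}(v) explicitly says that even in the cases where the Oort conjecture is now known via \cite{OW:oc}, the method gives no control over the ring $R$, and that lifting over $W(k)[\zeta_{p^n}]$ ``will require new techniques.'' Your proposal is not a proof but a strategy outline, and it leaves open precisely the points that make the conjecture hard. Concretely: (1) Your prime-to-$p$ reduction asserts that one can glue the tame part to a lift of the $p$-part over $W(k)[\zeta_{|P|}]$ and land in $W(k)[\zeta_{|G|}]$; the remark following Proposition \ref{Preducetop} points out exactly why this is unclear --- the construction needs the generic fiber of the $\ints/p^n$-lift to have a branch point of index $p^n$ defined over $\Frac(W(k)[\zeta_{mp^n}])$, and there is no known reason this holds. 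Saying ``glue $C$-equivariantly'' does not address this. (2) Your inductive step appeals to the \S\ref{Szpnlifting} machinery, but Proposition \ref{Psmoothing} is non-constructive (Remark \ref{Rnotexplicit}: no control over how large $R$ must be taken), requires the valuation hypothesis on the zeroes of $H_1,\ldots,H_{n-1}$, and the passage to arbitrary top jump (Lemma \ref{Lbounds}, Theorem \ref{Tzpnlifting}) only works under an arithmetic condition on the jumps; so even the unrefined Oort conjecture is not obtained this way for all extensions, let alone with the field of definition pinned down.

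(3) The ``rigidity statement'' you say one ``would need'' is exactly the missing new idea, not something that follows from anything in the paper; asserting its expected shape and naming where ``the core difficulty lies'' is an honest research plan but not a proof. Moreover, if you intend to run the argument through Hurwitz trees with deformation data for $\ints/p^n$-actions ($n>1$), Remark \ref{Rbighurwitz} notes that there is currently no analogue of Lemma \ref{Lboundarystructure} for cyclic $p$-Sylow subgroups of order greater than $p$, so the gluing step underlying Proposition \ref{Phurwitz2action} cannot be carried out in that setting. In short, each stage of your outline either reproduces a step the paper already identifies as unresolved or postulates a lemma that is not available, so the conjecture remains unproven by this proposal.
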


In fact, the results in \cite{OSS:ask} and \cite{GM:lg} prove the strong Oort conjecture for $v_p(|G|) \leq 2$.

\begin{remark}\label{Rdihedraloort}
In seems plausible that all $D_{p^n}$ might be local Oort groups for any $k$ of characteristic $p$, but no case
has been proven with $n > 1$ and no one has been willing to conjecture this.  See Question \ref{Qmetacyclic}.
\end{remark}

\begin{remark}\label{Roort}
The Oort conjecture was originally phrased in \cite[\S7]{Oo:ac} as the global statement that ``it seems reasonable to expect that [lifting] is
possible for every automorphism of an algebraic curve."
\end{remark}

\subsection{Non-KGB groups}\label{SnonKGB}
For a group $G$ that is not a KGB group, we can ask when a $G$-extension has vanishing KGB obstruction.  
In this section, we will focus on two of the simplest cases, namely $G = \ints/p \times \ints/p$, for
$p > 2$, and $G = \ints/p^n \rtimes \ints/m$, when $p \nmid m$.  We will use the fact that any $n$-tuple of generators
$(g_1, \ldots, g_n)$ of $G$ such that $g_1\cdots g_n = id$ gives rise to a $G$-cover of $\proj^1_K$ branched at $n$ points
for any algebraically closed field $K$ of characteristic zero (see, e.g., \cite{CH:hu}).  The branching indices of the $n$ branch points are the orders of 
$g_1, \ldots, g_n$, respectively (in fact, the inertia groups above these points are conjugates of the cyclic 
groups generated by the $g_i$).  The $n$-tuple $(g_1, \ldots, g_n)$ is called a \emph{branch cycle description} of the $G$-cover.

\begin{prop}\label{Pzpzp}
 Let $m_1$ and $m_2$ be the first and second lower jumps of a $\mathbb{Z}/p
\times \mathbb{Z}/p$-extension $A/k[[t]]$. If the KGB obstruction for
$A/k[[t]]$ vanishes, then $m_1 \equiv -1 \pmod{p}$.  The converse is
true unless $p = 3$ and $m_1 = m_2 = 2$.
\end{prop}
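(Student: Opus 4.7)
The plan is to reformulate the KGB vanishing condition as the existence of a particular branch cycle description of a $G$-cover in characteristic zero, and then attack this combinatorially.

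\textbf{Setting up.} The Katz-Gabber cover $Y \to \proj^1_k$ is branched only above $t=0$, since the prime-to-$p$ quotient of $G = \ints/p \times \ints/p$ is trivial. The ramification filtration of the inertia at this point is $G_0 = \cdots = G_{m_1} = G$, $G_{m_1+1} = \cdots = G_{m_2}$ (an order-$p$ subgroup, which we denote $H^\ast$ when $m_1 < m_2$), and $G_{m_2+1} = 1$. Since the lower filtration is compatible with subgroups, equation \eqref{Ebasicdifferent} gives, for any order-$p$ subgroup $H \leq G$,
\[
\delta(Y \to Y/H) = \begin{cases} (m_2+1)(p-1) & \text{if } H = H^\ast, \\ (m_1+1)(p-1) & \text{otherwise,} \end{cases}
\]
with all order-$p$ subgroups falling in the second case when $m_1 = m_2$.

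\textbf{Characteristic zero side.} By Riemann existence, a $G$-cover $X \to \proj^1$ over an algebraically closed field of characteristic zero is given by a tuple $(g_1, \ldots, g_n)$ of nontrivial elements of $G$, necessarily of order $p$, with $g_1 \cdots g_n = 1$ and generating $G$. For each order-$p$ subgroup $H$, the inertia group of $X \to \proj^1$ at a point above the $i$-th branch point meets $H$ nontrivially exactly when $g_i \in H$, in which case tame ramification contributes $p(p-1)$ to the different of $X \to X/H$ (orbit of size $p$, each with ramification index $p$). Thus $\delta(X \to X/H) = p(p-1)\, n_H$ where $n_H := \#\{i : g_i \in H\}$.

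\textbf{Forward direction.} If KGB vanishes, such an $X$ exists matching the differents. Comparing at any order-$p$ subgroup $H \neq H^\ast$ yields $(m_1+1)(p-1) = p(p-1)\, n_H$, forcing $p \mid m_1 + 1$. By Hasse-Arf applied to the abelian extension, the upper jumps $m_1$ and $m_1 + (m_2-m_1)/p$ are integers, so $m_2 \equiv m_1 \equiv -1 \pmod p$ automatically.

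\textbf{Converse.} Assume $m_1 \equiv -1 \pmod p$, and set $a = (m_1+1)/p$ and $b = (m_2+1)/p$, both positive integers. We must produce a tuple $(g_1, \ldots, g_n)$ with exactly $a$ entries in each of the $p$ order-$p$ subgroups other than $H^\ast$ and $b$ entries in $H^\ast$, summing to $0$ in $G$ (generation of $G$ is automatic, as all $p+1 \geq 4$ subgroups contribute). Grouping entries by their host subgroup, each subgroup contributes a partial sum $s_H \in H$ that may be freely prescribed when the count is $\geq 2$, but must be nonzero when the count is $1$. A case analysis separating $a \geq 2$ or $b \geq 2$ (which is easy) from the delicate case $a = b = 1$ reduces the latter to choosing one nonzero element in each of the $p+1$ lines of $\FF_p^2$ summing to $0$. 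Coordinatizing $H^\ast = \langle e_1\rangle$ and parametrizing the remaining lines as $\langle e_2 \rangle$ and $\langle e_1 + ce_2\rangle$ for $c = 1, \ldots, p-1$, this reduces to finding $\gamma_1, \ldots, \gamma_{p-1} \in \FF_p^\times$ with $\sum_c \gamma_c \neq 0$ and $\sum_c c\gamma_c \neq 0$. A simple counting argument produces such tuples for all $p \geq 5$, while enumerating the four elements of $(\FF_3^\times)^2$ shows no such tuple exists for $p = 3$; this is precisely the exception $m_1 = m_2 = 2$. The principal obstacle is isolating and verifying impossibility in this single degenerate case.
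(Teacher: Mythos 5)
Your proposal is correct and follows essentially the same route as the paper: it compares the degree of the different of $Y \to Y/H$ (computed via $H_j = H \cap G_j$ and (\ref{Ebasicdifferent})) with that of a characteristic-zero $G$-cover whose branch cycle description has $(m_1+1)/p$ entries in each order-$p$ subgroup other than $H^\ast$ and $(m_2+1)/p$ entries in $H^\ast$, and your analysis of the product-one condition, including the impossibility in the case $p=3$, $m_1=m_2=2$, fills in exactly what the paper leaves as an exercise. The only point to add is the (immediate) verification that the ramification divisors also match for $H = G$ — KGB vanishing requires every subgroup, not just those of order $p$ — which follows from a one-line computation since $(m_1+1)(p^2-1) + (m_2-m_1)(p-1) = (p-1)\bigl[(m_1+1)p + (m_2+1)\bigr] = p(p-1)\bigl(pa+b\bigr)$.
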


\begin{proof} (cf.\ \cite[I, Theorem 5.1]{GM:lg})
The degree of the ramification 
divisor of the Katz-Gabber cover $g: Y \to \proj^1_k$ associated to $A/k[[t]]$ is the same as the degree of the different of $A/k[[t]]$, which is 
$\delta = (m_1 + 1)(p^2-1) + (m_2 - m_1)(p-1)$ by (\ref{Ebasicdifferent}).  Now, each branch point of a $\ints/p \times \ints/p$-cover in characteristic 
zero is 
branched of index $p$, and thus contributes $p(p-1)$ to the degree of the ramification divisor.  Thus, for the KGB obstruction to vanish, we must
have $p(p-1) \, | \, \delta$.  This is equivalent to $p \, | \, m_2 + 1$.  Since $m_1 \equiv m_2 \pmod{p}$ (\cite[IV, Prop.\ 11]{Se:lf}), we have
$m_1 \equiv -1 \pmod{p}$.

Conversely, suppose $m_1 \equiv -1 \pmod{p}$ (and thus $m_2 \equiv -1 \pmod{p}$ as well).
Consider a branch cycle description of the form
$$(g_{0,1}, \ldots, g_{0, r_0}, g_{1,1}, \ldots, g_{1, r_2}, \ldots, g_{p, 1}, \ldots, g_{p, r_{p}})$$ 
corresponding to branch points
$$(x_{0,1}, \ldots, x_{0, r_0}, x_{1,1}, \ldots, x_{1, r_2}, \ldots, x_{p, 1}, \ldots, x_{p, r_{p}}),$$ 
where $r_0 = \cdots = r_{p-1} = \frac{m_1+1}{p}$ and $r_{p} = \frac{m_2+1}{p}$. 
Write $G := \ints/p \times \ints/p$ additively, such that $(1, 0)$ is an element of $G_{m_2}$. 
Then we can take each $g_{i,j}$ to be a non-identity multiple of $(i, 1)$ for $0 \leq i \leq p-1$, and each $g_{p, j}$ to be a 
non-identity multiple of $(1, 0)$.  
We leave it as an exercise to show that, unless $p=3$ and $m_1 = m_2 = 2$, these
choices can be made so that $\sum_{i,j} g_{i,j} = (0,0)$.
This gives rise to a $G$-cover $X \to \proj^1_K$, with $\cf(K) = 0$.

Suppose $H$ is a subgroup of $G$.  We calculate the degrees of the ramification divisors $\mc{R}_X$ of $X \to X/H$ and 
$\mc{R}_Y$ of $Y \to Y/H$.  If $H$ is trivial, then these degrees are both zero.
If $H = G$, then $|\mc{R}_Y| = (m_1 + 1)(p^2-1) + (m_2 - m_1)(p-1)$ by the same calculation as earlier 
in the proof, and it is easy to see that $|\mc{R}_X| = (m_1 + 1 + \frac{m_2 +1}{p})(p^2-p)$, which is equal to $|\mc{R}_Y|$.  

Now assume $H$ has order $p$. The $H$-cover $X \to X/H$ is ramified exactly at the ramification points above those $x_{i,j}$ for which $g_{i, j}$
generates $H$.  Since $p$ points of $X$ lie above each $x_{i,j}$, we obtain $|\mc{R}_X| = (m_2+1)(p-1)$ if $H$ is generated by $(1,0)$, and 
$(m_1+1)(p-1)$ if not.

If the $H_j$ are the higher ramification subgroups for the lower numbering for $Y \to Y/H$ at the ramified point, 
and the $G_j$ are the higher ramification groups for $Y \to \proj^1_k$, then $H_j = H \cap G_j$ (\cite[Prop.\ 2]{Se:lf}).  
By (\ref{Ebasicdifferent}), we have $|\mc{R}_Y| = \sum_{j\geq 0} |H_j - 1|$.  Note that $H \cap G_j \cong \ints/p$ for $j \leq m_1$, and
is trivial when $j > m_1$, except if $H$ is generated by $(1,0)$, in which case $H_j \cong \ints/p$ for $m_1 < j \leq m_2$.
So one sees that $|\mc{R}_Y| = (m_2 + 1)(p-1)$ if $H$ is generated by $(1, 0)$, or $(m_1 + 1)(p-1)$ if not.  Thus, the $G$-cover $X \to \proj^1$ causes
the KGB obstruction to vanish.
\end{proof}

\begin{prop}\label{Pzpzm}
Let $G = \ints/p^n \rtimes \ints/m$, where $G$ is \emph{not} cyclic and $p \nmid m$.  
Consider a $G$-extension $A/k[[t]]$ with first positive lower jump $h$.
Then the KGB obstruction for $A/k[[t]]$ vanishes iff $h \equiv -1 \pmod{m}$.  Furthermore, for this to happen, we must have that
the conjugation action of $\ints/m$ on $\ints/p^n$ is faithful.
\end{prop}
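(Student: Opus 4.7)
The plan is to model the argument on the proof of Proposition \ref{Pzpzp}: derive a congruence from the KGB obstruction applied to a single well-chosen cyclic subgroup $H \subseteq G$, and then establish the converse by exhibiting an explicit branch cycle description in characteristic zero. Throughout, let $h = h_1 < h_2 < \cdots < h_n$ be the positive lower jumps of $A/k[[t]]$; since $G_{h_l} \subseteq P$ is cyclic and $G_{h_l}/G_{h_l+1}$ is elementary abelian by \S\ref{Sdvrextensions}, each such quotient is isomorphic to $\ints/p$.

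The key preliminary step is an intrinsic congruence $h_1 \equiv h_2 \equiv \cdots \equiv h_n \pmod{m}$, independent of any KGB hypothesis. I would describe the $\ints/m$-action on $G_{h_l}/G_{h_l+1}$ in two ways. Group-theoretically it is the mod-$p$ reduction $\bar\rho$ of the semidirect-product action $\rho : \ints/m \to \Aut(P) = (\ints/p^n)^*$. Analytically, after choosing a uniformizer $u$ of $A$ diagonalizing the $\ints/m$-action as $\tau(u) = \chi(\tau) u$ (where $\chi$ is the cotangent character, of order $m$), the leading-coefficient map $\sigma \mapsto$ (coefficient of $u^{h_l+1}$ in $\sigma(u)-u$) embeds $G_{h_l}/G_{h_l+1}$ into $k$, and a direct computation shows conjugation by $\tau$ multiplies this coefficient by $\chi(\tau)^{h_l}$. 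Consequently $\chi^{h_l} = \bar\rho$ for every $l$, and since $\chi$ has order $m$, all $h_l$ share one residue modulo $m$.

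For the forward direction I apply the KGB obstruction to the unique subgroup $H \subset G$ of order $p$, which lies in $P$ because $P$ is the unique Sylow $p$-subgroup. The inclusion $H \subseteq G_i$ holds exactly for $i \leq h_n$, so Serre's formula $H_i = H \cap G_i$ together with (\ref{Ebasicdifferent}) yields $|\mc{R}_{Y \to Y/H}| = (h_n + 1)(p-1)$ (the contribution at $\infty$ vanishes since $H \cap \ints/m = \{\mathrm{id}\}$). On the characteristic-zero side, every element of $G$ of order divisible by $p$ lies in $P$, so the inertia group at a ramified point of $X \to X/H$ either contains $H$ or is trivial, and a branch cycle of order $p^{e_j}$ contributes $(|G|/p^{e_j})(p-1) = m(p-1)\,p^{n-e_j}$ while tame cycles contribute zero. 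Hence $|\mc{R}_{X \to X/H}|$ is a multiple of $m(p-1)$, forcing $m \mid h_n+1$; combined with the intrinsic congruence this gives $h \equiv -1 \pmod m$. The faithfulness assertion then follows from the preliminary step: $h \equiv -1 \pmod m$ forces $\gcd(h,m)=1$, so $\bar\rho = \chi^h$ has order $m$, and since the kernel of $(\ints/p^n)^* \to \FF_p^*$ has order $p^{n-1}$ (prime to $m$), faithfulness of $\bar\rho$ is equivalent to faithfulness of $\rho$.

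For the converse, assume $h \equiv -1 \pmod m$ (whence all $h_l \equiv -1 \pmod m$ and the action is faithful). I would write a branch cycle description $(g_1, g_2, g_3, \ldots, g_r)$ with $g_1, g_2$ tame of order $m$ lying in two complements of $P$ in $G$, chosen so that $g_1 g_2 \in P$, followed by wild cycles $g_3, \ldots, g_r$ in $P$ of various $p$-power orders; the multiplicities $N_l$ of wild cycles of order $p^l$ are determined by requiring $|\mc{R}_{X \to X/H}| = |\mc{R}_{Y \to Y/H}|$ for each cyclic $H = \ints/p^i \subseteq P$. A computation analogous to the $H = P$ case expands both sides in powers of $p$ and reduces the matching to a linear system whose solvability in nonnegative integers hinges on the divisibilities $m \mid h_l + 1$ (and the Hasse-Arf divisibilities $p^{l-1} \mid h_l - h_{l-1}$ coming from the abelian subextension $A/A^P$). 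Matching for the remaining subgroups of $G$, namely those conjugate to a subgroup of a complement and those of the form $\ints/p^i \rtimes \ints/m'$, then decomposes into the already-matched tame and wild contributions. The main obstacle will be assembling the branch cycle data so that the product is the identity and the tuple generates $G$, and verifying that the matching extends uniformly to all subgroups $H$ rather than merely the cyclic subgroups of $P$ used to pin down the $N_l$.
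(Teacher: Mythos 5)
Your forward direction has a genuine gap at the step ``every element of $G$ of order divisible by $p$ lies in $P$.'' That claim is \emph{equivalent} to faithfulness of the conjugation action of $\ints/m$ on $\ints/p^n$: if the action has nontrivial kernel $D$, then a nontrivial $\sigma \in P$ and a nontrivial $c' \in D$ commute, so $\sigma c'$ has order divisible by $p$ but does not lie in $P$. Since faithfulness is part of the conclusion you are trying to establish, you cannot assume it here, and without it the divisibility argument fails: a branch point of the characteristic-zero witness cover whose (cyclic) inertia has order $p^a m'$ with $m' > 1$ contributes $\frac{m}{m'}p^{n-a}(p-1)$ to $|\mc{R}_{X \to X/H}|$, which need not be a multiple of $m(p-1)$, so you cannot conclude $m \mid h_n + 1$. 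A telltale symptom is that your forward argument never uses the hypothesis that $G$ is not cyclic, yet the statement is false for cyclic $G$ (cyclic groups are KGB groups with no congruence condition on $h$). The missing ingredient is precisely how the paper begins: apply the KGB matching to $H = P = \ints/p^n$. Since $Y/P \to \proj^1_k$ is a $\ints/m$-cover branched at two points, $Y/P$ has genus $0$, so $X/P \to \proj^1_K$ is a genus-$0$ cyclic cover in characteristic zero and hence is branched at exactly two points of index $m$; then non-cyclicity of $G$ forces every element of order divisible by $m$ to have order exactly $m$, so the witness cover has exactly two tame branch points of index $m$ and all remaining branch points of pure $p$-power index. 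Only after this structural step does a quotient/subgroup comparison (the paper uses $X/(\ints/p^{n-1})$ versus $Y/(\ints/p^{n-1})$; your order-$p$ subgroup would then also work) yield $h \equiv -1 \pmod m$, and faithfulness follows as you say, or by the citation to Pries used in the paper.

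Your preliminary congruence $h_1 \equiv \cdots \equiv h_n \pmod m$ via the tame character acting on $G_{h_l}/G_{h_l+1}$ is correct (the paper instead quotes \cite{OP:wc} for the corresponding statement about upper jumps, and its forward direction does not even need it). Your converse follows the same route as the paper --- two tame cycles of order $m$ plus wild cycles in $P$ with multiplicities forced by the ramification count, then a subgroup-by-subgroup verification --- but as written it is a plan rather than a proof: the paper fixes the multiplicities explicitly as $r_1 = u_1 + \frac1m$ and $r_i = u_i - u_{i-1}$ in terms of the upper jumps, arranges the product-one condition by adjusting $\gamma_1$, and carries out the ramification-divisor comparison for an arbitrary subgroup of order $p^{n'}m'$, which are exactly the points you flag as remaining obstacles.
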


\begin{proof}
(cf. \cite[Prop.\ 1.3]{BW:ll}) 
Let $g: Y \to \proj^1_k$ be the Katz-Gabber cover associated to $A/k[[t]]$.  Let $f: X \to \proj^1_K$ be a $G$-cover in characteristic zero.
Now, $Y/(\ints/p^n) \to \proj^1_k$ is a $\ints/m$-cover, branched at two points, so $Y/(\ints/p^n)$ must have genus $0$.  
Thus, if $f$ is to be a witness to the 
vanishing of the KGB obstruction, then the $\ints/m$-cover $X/(\ints/p^n) \to \proj^1_K$ must have genus $0$.  Since any element of  
$PGL_2(K)$ of finite order is conjugate (after a possible extension of $K$) to a diagonal element when $\cf(K) = 0$, 
such an element acts on $\proj^1$ with exactly two fixed points.
Thus $X/(\ints/p^n) \to \proj^1_K$ must be branched at two points with order $m$.  
Since $G$ is not cyclic, any element of $G$ with order divisible by $m$ in fact has order $m$.  So the $G$-cover $f$ 
must have two branch points
of index $m$, along with some branch points of $p$-power index.  Consider $\tilde{f}: X/(\ints/p^{n-1}) \to \proj^1_K$.  
The degree of the ramification divisor
of this branched cover is $2(mp - p) + r(mp - m)$, where $r$ is the number of branch points of $f$ with index $p^n$ (these points are branched of index
$p$ in $\tilde{f}$).  

Now, the first positive upper jump for $A/k[[t]]$ is $h/m$, and since the upper numbering is preserved under quotients (\cite[Prop.\ 14]{Se:lf}), 
the first positive upper
jump for $A^{\ints/p^{n-1}}/k[[t]]$ is $h/m$.  So the first lower jump for $A^{\ints/p^{n-1}}/k[[t]]$ is $h$.  The degree of the different of this extension
is then $mp-1 + h(p-1)$, by (\ref{Ebasicdifferent}).  Thus the degree of the ramification divisor of $Y/(\ints/p^{n-1}) \to \proj^1_k$ is 
$(mp - p) + mp - 1 + h(p-1)$.  Equating this to $2(mp - p) + r(mp - m)$ yields $h = -1 + rm$, so $h \equiv -1 \pmod{m}$.  
Furthermore, it is well-known (see, e.g., \cite[Lemma 1.4.1(iv)]{Pr:fc}) that $(h, m) = 1$ implies that the action of $\ints/m$ on the $p$-Sylow
subgroup of $G/(\ints/p^{n-1})$ is faithful.  Thus the action of $\ints/m$ on $\ints/p^n \subseteq G$ is faithful.

Conversely, suppose $h \equiv -1 \pmod{m}$ (so the action of $\ints/m$ on $\ints/p^n$ is faithful).  
Let the positive upper jumps for $A/k[[t]]$ be $u_1 = h/m, u_2, \ldots, u_n$.  
By \cite[Theorem 1.1]{OP:wc}, we have that $mu_1 \equiv \cdots \equiv mu_n \equiv -1 \pmod{m}$.
Consider a branch cycle description of the form
$$(\gamma_1, \gamma_2, g_{1,1}, \ldots, g_{1, r_1}, g_{2,1}, \ldots, g_{2, r_2}, 
\ldots, g_{n, 1}, \ldots, g_{n, r_n})$$ corresponding to branch points
$$(z_1, z_2, x_{1,1}, \ldots, x_{1, r_1}, x_{2,1}, \ldots, x_{2, r_2}, 
\ldots, x_{n, 1}, \ldots, x_{n, r_n}),$$
where $r_1 = u_1 + \frac{1}{m}$ and $r_i = u_i - u_{i-1}$ for $2 \leq i \leq n$.
We choose $\gamma_1$ and $\gamma_2$ to be elements of $G$ of order $m$, and for $1 \leq i \leq n$, we choose each $g_{i, j}$ to be an element of
$G$ of order $p^{n-i+1}$.  By choosing $\gamma_1$ correctly, we can ensure that $\gamma_1\gamma_2\prod_{i,j}g_{i,j}$ is the identity.  
This gives rise to a $G$-cover $X \to \proj^1_K$, with $\cf(K) = 0$.

Suppose $H$ is a subgroup of $G$ of order $p^{n'}m'$.  We calculate the degrees of the ramification divisors $\mc{R}_X$ of $X \to X/H$ and 
$\mc{R}_Y$ of $Y \to Y/H$.
We get a contribution of $p^{\min(n', n-i+1)} - 1$ to $|\mc{R}_X|$ at each of the $mp^{i-1}$ points of $X$ lying above an $x_{i, j}$, for any $j$.  
We get a contribution of $m' - 1$ to $|\mc{R}_X|$ at each point above $z_1$ or $z_2$ whose inertia group intersects $H$ nontrivially.
The $p^n$ subgroups of $G$ of order $m$ are all conjugate and disjoint except for the identity element, and each one is the inertia group
at one point above $z_1$ and one point above $z_2$.  Furthermore, $H$ contains $p^{n'}$ subgroups
of order $m'$, each one contained in a unique subgroup of order $m$ of $G$.  So the total contribution to $|\mc{R}_X|$ for the points above
$z_1$ and $z_2$ is $2p^{n'}(m'-1)$.  We have shown:
$$|\mc{R}_X| = 2p^{n'}(m'-1) + \sum_{i=1}^n r_i mp^{i-1} (p^{\min(n', n-i+1)} - 1).$$ 

In characteristic $p$, 
the tamely ramified point of $Y \to \proj^1_k$ contributes $p^{n'}(m'-1)$ to $|\mc{R}_Y|$ for the same reasons as in the last paragraph.
Using Herbrand's formula, the higher ramification filtration of $Y \to \proj^1_k$ at the wildly ramified point for the lower numbering 
has one group of order
$p^nm$, and $mp^{i-1}(u_i - u_{i-1})$ groups of order $p^{n-i+1}$ for $1 \leq i \leq n$, where we set $u_0 = 0$.  If the $H_j$ are the 
higher ramification subgroups for $Y \to Y/H$ at this point, and the $G_j$ are the higher ramification groups for
$Y \to \proj^1_k$, then $H_j = H \cap G_j$ (\cite[Prop.\ 2]{Se:lf}).  By (\ref{Ebasicdifferent}), the contribution to $|\mc{R}_Y|$ at this point
is $\sum_{j\geq 0} |H_j - 1|$.  Note that if $j > 0$, then $|H_j|$ is the smaller of the order of the $p$-Sylow subgroups of $H$ and $G_j$.
We conclude that
$$|\mc{R}_Y| = p^{n'}(m'-1) + p^{n'}m' - 1 + \sum_{i=1}^n mp^{i-1}(u_i - u_{i-1})(p^{\min(n', n-i+1)} - 1).$$ 
Some rearrangement shows that $|\mc{R}_X| = |\mc{R}_Y|$.  Thus the KGB obstruction vanishes.
\end{proof}

\begin{remark}\label{Rdihedral}
If $p$ is odd, then any action of the dihedral group $D_{p^n}$ satisfies the condition of Proposition \ref{Pzpzm} 
(\cite[Lemma 1.4.1]{Pr:fc} or \cite[Theorem 1.1]{OP:wc}).  So Proposition \ref{Pzpzm} shows that $D_{p^n}$ is a KGB group for $p$ odd.
\end{remark}

\begin{question}\label{Qmetacyclic}
Does every $\ints/p^n \rtimes \ints/m$-action satisfying the condition of Proposition \ref{Pzpzm} lift to characteristic zero?   
\end{question}

Question \ref{Qmetacyclic} has a positive answer for $n=1$ (Theorem \ref{Tzpzm}).  For $n>1$, however, I am not aware of a single 
$G$-extension in this form that is known either to lift or not to lift.
But there are $D_4$-extensions that are known to lift from characteristic $2$ to characteristic zero (\cite{Br:D4}).

\subsection{Weak Oort groups}\label{Sweak}
A group $G$ is called a \emph{weak Oort group} for $k$ if there exists a $G$-extension $A/k[[t]]$ that lifts to characteristic zero.  For instance,
we mentioned above that $D_4$ is a weak Oort group for any algebraically closed field $k$ of characteristic $2$.  Clearly, any local Oort
group is a weak Oort group.  We will content ourselves here to state two results about weak Oort groups, one positive and one negative.
The first is due to Matignon:

\begin{prop}[\cite{Ma:pg}]\label{Pzpnweak}
The group $(\ints/p)^n$ is a weak Oort group for all primes $p$ and positive integers $n$.
\end{prop}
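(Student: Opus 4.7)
The plan is to exhibit one specific $(\ints/p)^n$-extension $A/k[[t]]$ together with an explicit lift to characteristic zero, making essential use of Kummer-Artin-Schreier equations over $W(k)[\zeta_p]$.

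First, I would choose $n$ Artin-Schreier equations $y_i^p - y_i = f_i(t)$ with $f_i \in k[t^{-1}]$ whose classes in $k((t))/\wp(k((t)))$ are $\FF_p$-linearly independent, so that the compositum is a $(\ints/p)^n$-Galois extension of $k((t))$. Let $A$ be the integral closure of $k[[t]]$ in this compositum. A natural choice is $f_i(t) = c_i\,t^{-m_i}$ with $c_i \in k^*$ and $m_i$ coprime to $p$; part of the construction consists in choosing the $m_i$ so that the ramification data of $A$ are compatible with the lift to be constructed.

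Next, set $R = W(k)[\zeta_p]$ with uniformizer $\pi = \zeta_p - 1$. For each $i$, I would lift $y_i^p - y_i = f_i(t)$ to the Kummer-Artin-Schreier equation
$$\frac{(1 + \pi Y_i)^p - 1}{\pi^p} = F_i(T),$$
for some $F_i \in \Frac(R[[T]])$ reducing to $f_i$ modulo $\pi$. Each such equation defines a $\ints/p$-Galois extension of $\Frac(R[[T]])$ whose reduction modulo $\pi$ recovers the Artin-Schreier equation $y_i^p - y_i = f_i(t)$. Since the $n$ Artin-Schreier extensions are independent on the special fiber, so are the Kummer-Artin-Schreier extensions on the generic fiber; their compositum is a $(\ints/p)^n$-Galois extension, and its integral closure $A_R$ inside $R[[T]]$ is a birational lift of $A/k[[t]]$ in the sense of Theorem \ref{Tbirational}.

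To conclude that $A_R$ is an actual lift (not merely birational), I would apply the different criterion (Proposition \ref{Pdifferent}): it suffices to show that the degree of the different of $A_R/R[[T]]$ on the generic fiber equals that of $A/k[[t]]$ on the special fiber. The special-fiber different is determined by the higher ramification filtration of $A$ via \eqref{Ebasicdifferent}; the generic-fiber different is determined by the joint branch locus of the $n$ Kummer-Artin-Schreier factors in characteristic zero.

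The main obstacle will be this different-matching. A naive choice with pairwise disjoint generic branch loci produces a generic different that is only linear in $n$, whereas the special-fiber different grows exponentially in $n$, reflecting the much heavier wild ramification in characteristic $p$ (cf.\ Remark \ref{Rwildhigher}). To close the gap, the $F_i$ must be chosen so that the branch points of different factors coincide on the generic fiber in a combinatorial pattern dictated by the higher ramification filtration of $A$ via Herbrand's formula: at a point shared by $j$ of the $F_i$, the compositum acquires ramification of index $p^j$ and a correspondingly larger contribution to the different. Constructing such coordinated lifts $F_i$ explicitly, and verifying the combinatorial identity that makes the generic and special differents match for at least one allowed choice of the $f_i$, is the technical heart of the argument of \cite{Ma:pg}.
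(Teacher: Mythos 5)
Your overall strategy---write down an explicit $(\ints/p)^n$-extension of $k[[t]]$, lift it factor by factor via Kummer--Artin--Schreier equations over $W(k)[\zeta_p]$, and certify the lift with the different criterion (Proposition \ref{Pdifferent})---is indeed the spirit of Matignon's argument (the paper itself gives no proof of Proposition \ref{Pzpnweak}, only the citation \cite{Ma:pg}). But the step you call the technical heart is wrong as stated. On the generic fiber we are in characteristic zero, so every inertia group of a $(\ints/p)^n$-cover is cyclic; since the group has exponent $p$, every branch point has index exactly $p$ (this is used explicitly in the proof of Proposition \ref{Pzpzp}). A point at which $j \geq 2$ of the $F_i$ are branched does \emph{not} acquire ramification of index $p^j$: its inertia is a diagonally embedded $\ints/p$, and it contributes $p^{n-1}(p-1)$ to $\delta_\eta$, exactly as a branch point of a single factor does. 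Your quantitative picture is also backwards. With pairwise disjoint branch loci, each factor lifted as in Theorem \ref{Tzplifting} and branched at $m_i+1$ points, one gets $\delta_\eta = \bigl(\sum_i (m_i+1)\bigr)p^{n-1}(p-1)$, which is not ``linear in $n$'' and is strictly \emph{larger} than $\delta_s$, never smaller (Proposition \ref{Pdifferent} forces $\delta_s \leq \delta_\eta$ in any birational lift); for $n=2$ with lower jumps $m_1 \leq m_2$ the excess is $(p-1)^2(m_2+1)$.

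Consequently the correct role of coinciding branch points is not to create bigger local contributions but to \emph{decrease the number} of branch points, each of which contributes the same amount $p^{n-1}(p-1)$; matching $\delta_s$ forces $p^{n-1}(p-1) \mid \delta_s$, i.e.\ congruence conditions on the jumps (for $n=2$ exactly the condition $m_1 \equiv m_2 \equiv -1 \pmod{p}$ of Proposition \ref{Pzpzp}), and it prescribes how many zeroes the various $F_i$ must share. The genuinely hard point, which your proposal defers entirely to the cited paper, is existence: one must produce $n$ Kummer--Artin--Schreier covers of the disc whose branch points coincide in the required pattern while the compositum still has separable, smooth reduction equal to the chosen Artin--Schreier data. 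This is Matignon's construction via $\FF_p$-vector spaces of logarithmic differential forms (cf.\ Pagot \cite{Pa:ev}), and it is delicate precisely because most $(\ints/p)^n$-extensions do not lift at all ($(\ints/p)^n$ is not a KGB group for $n \geq 2$, $p$ odd). As written, then, your outline rests on an incorrect different-matching mechanism and omits the existence step that constitutes the actual proof.
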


Note that, if $n > 1$ and $p$ is odd, then we can build a $(\ints/p)^n$-extension with a $(\ints/p)^2$-subextension that has nonvanishing KGB 
obstruction (Proposition \ref{Pzpzp}).  Thus $(\ints/p)^n$ is not an Oort group (or even a KGB group) by Theorem \ref{TKGB} 
(one also sees it is not an Oort group from the example in \S\ref{Slifting}).  For $p = 2$, we have that $(\ints/p)^n$ is not an Oort group for $n > 2$ by
the example in \S\ref{Slifting}. 
 
The second result follows from \cite[Theorem 1.8]{CGH:ll}.
\begin{prop}
If $G$ contains an abelian subgroup that is neither cyclic nor a $p$-group, then $G$ is not a weak Oort group. 
\end{prop}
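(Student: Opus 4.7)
The plan is to argue by contradiction. Suppose $G$ is a weak Oort group for $k$, so some $G$-extension $A/k[[t]]$ admits a lift $A_R/R[[T]]$ to characteristic zero over a finite $R/W(k)$. For the given abelian subgroup $H \leq G$, the invariant ring $A^H$ is a complete DVR with residue field $k$, hence of the form $k[[s]]$, and $A/A^H$ is an $H$-Galois extension. The invariants $A_R^H$ form a complete regular two-dimensional local ring reducing to $A^H$, so $A_R^H \cong R[[S]]$, and $A_R/A_R^H$ is a lift of $A/A^H$. Thus it suffices to show that $H$ itself is not a weak Oort group.

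Next, I would apply the structural constraints from \S\ref{Sdvrextensions}: the Galois group of any extension of $k[[s]]$ is of the form $P \rtimes \ints/m$ with $P$ a $p$-group and $\gcd(p,m)=1$. Since $H$ is abelian, this semidirect product is a direct product $H = H_p \times \ints/m$ with $H_p$ the $p$-Sylow. The hypotheses on $H$ force $H_p$ non-cyclic and $m>1$. For any $H$-extension $A'/k[[s]]$, I would pass to the Katz-Gabber $H$-cover $Y \to \proj^1_k$, which is wild at $0$ with inertia $H$ and tamely ramified at $\infty$ with inertia $\ints/m$. By the local-global principle (Theorem \ref{Tlocalglobal}), a lift of $A'/k[[s]]$ corresponds to a lift $Y_R \to \proj^1_R$, with generic fiber cover $Y_K \to \proj^1_K$ in characteristic zero. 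Because inertia groups in characteristic zero are cyclic and $H_p$ is non-cyclic, no single branch point of the generic fiber can realize the full inertia $H$; specialization therefore forces at least two branch points of $Y_K \to \proj^1_K$ to collapse to the wild point of $\proj^1_k$, their cyclic inertia subgroups jointly generating $H$.

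The heart of the proof is then a numerical contradiction obtained by combining the different criterion (Proposition \ref{Pdifferent}) with Hurwitz-style genus comparisons applied to the subcovers $Y \to Y/H'$ and $Y_K \to Y_K/H'$ for all subgroups $H' \leq H$, in the spirit of Propositions \ref{Pzpzp} and \ref{Pzpzm}. The different of the special-fiber extension is rigidly determined by the higher ramification filtration of $A'/k[[s]]$, while the generic-fiber different is constrained by the cyclic subgroups of $H = H_p \times \ints/m$ available as inertia. The main obstacle, in my view, is carrying this out uniformly: one must rule out lifts of every possible $H$-extension and at every subgroup level simultaneously. The argument genuinely requires the interaction of the wild factor $H_p$ with the nontrivial tame factor $\ints/m$, since Matignon's theorem (Proposition \ref{Pzpnweak}) shows that pure elementary abelian $p$-groups are weak Oort, so the obstruction cannot come from $H_p$ in isolation; it is the tame factor that rigidifies the admissible cyclic subgroups enough to force incompatibility. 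The complete technical argument is carried out in \cite[Theorem 1.8]{CGH:ll}.
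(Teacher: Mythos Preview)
Your proposal and the paper arrive at the same endpoint: both defer the substance of the argument to \cite[Theorem 1.8]{CGH:ll}. The paper's ``proof'' is literally the single sentence that the result follows from that reference, so your approach is not different from the paper's---you have simply unpacked more of the scaffolding. Your reduction from $G$ to the abelian subgroup $H$ via taking $H$-invariants is correct (and is indeed the natural first step), as is your structural analysis that $H \cong H_p \times \ints/m$ with $H_p$ non-cyclic and $m>1$. The Katz-Gabber and genus-comparison heuristics you sketch are in the right spirit, though as you acknowledge, the actual contradiction is not carried out and is left to the cited reference---exactly as the paper does.
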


\section{Cyclic groups}\label{Scyclic}
Throughout \S\ref{Scyclic}, we take $R$ to be a large enough finite extension of $W(k)$, and $K = \Frac(R)$. 
Let $\mc{D} = \Spec R[[T]]$ be the open unit disc.  If $L$ is any field of characteristic $p$, we say that $x \in L$ is a \emph{$\wp$th power}
if the equation $y^p - y = x$ has a solution in $L$.  In this case $y$ is called a \emph{$\wp$th root} of $x$.

The purpose of \S\ref{Scyclic} is to discuss progress toward the Oort conjecture (Conjecture \ref{Coort}).

\subsection{Reduction to case of cyclic $p$-groups}\label{Spgroup}
Cyclic groups of prime-to-$p$ order are local Oort groups:

\begin{prop}\label{Pprimetop}
Suppose $x$ is any $K$-point of $\mc{D}$.  
If $A/k[[t]]$ is a $\ints/m$-extension, with $p \nmid m$, then it lifts to a $\ints/m$-extension $A_R/R[[T]]$.  We can choose this lift so that if 
$f: \Spec A_R \to \mc{D}$ is the induced map on schemes, then the generic fiber of $f$ is branched only at $x$.
If no branching behavior is specified, we can take $R = W(k)$.
\end{prop}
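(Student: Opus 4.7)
The plan is to reduce $A/k[[t]]$ to a standard form and then write down an explicit lift. Since $p \nmid m$ and $k$ is algebraically closed, $k$ contains a primitive $m$-th root of unity $\overline{\zeta}_m$, and Hensel's lemma applied to $X^m - 1$ shows that $W(k)$ contains a primitive $m$-th root of unity $\zeta_m$ reducing to $\overline{\zeta}_m$. In particular, no ramified extension of $W(k)$ will be forced by roots of unity, and the final assertion of the proposition will follow from the main construction specialized to $x = 0$.

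First I would put $A/k[[t]]$ into normal form. Such an extension is totally ramified, since it is Galois with algebraically closed residue field, so $A \cong k[[u]]$ for some uniformizer $u$. By an averaging argument, available since $p \nmid m$, I may replace $u$ by an eigenvector for $\ints/m$, so that a chosen generator $\gamma$ acts by $\gamma(u) = \overline{\zeta}_m u$ (the eigencharacter must be faithful, else $\gamma$ would act trivially on $\mathfrak{m}_A/\mathfrak{m}_A^2$, which forces $\gamma$ to be the identity when its order is prime to $p$). Then $u^m \in A^{\ints/m} = k[[t]]$ is a uniformizer, so $u^m = c t$ for a unit $c \in k^{\times}$; extracting an $m$-th root of $c$ in $k$ and rescaling $u$, I may assume $u^m = t$.

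Next I would construct the lift. The $K$-point $x$ of $\mc{D}$ corresponds to some $a \in \mathfrak{m}_R$. Set $A_R := R[[U]]$, let $\ints/m$ act via $U \mapsto \zeta_m U$, and embed $R[[T]] \hookrightarrow R[[U]]$ by $T \mapsto U^m + a$. Then $R[[U]]^{\ints/m} = R[[U^m]] = R[[T]]$ and $\{1, U, \ldots, U^{m-1}\}$ is an $R[[T]]$-basis of $R[[U]]$, so $A_R/R[[T]]$ is $\ints/m$-Galois. Since $\overline{a} = 0$, its reduction modulo the maximal ideal of $R$ is the extension $k[[u]]/k[[u^m]]$ with $\gamma(u) = \overline{\zeta}_m u$, matching the normal form and hence $A/k[[t]]$. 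The different of $A_R/R[[T]]$ is generated by $\partial(U^m - (T-a))/\partial U = m U^{m-1}$; on the generic fiber, where $m$ is a unit, this cuts out only $U = 0$, which lies over $T = a$, so the generic fiber is branched exactly at $x$.

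There is no real obstacle; the only delicate step is Step 1, where one uses $p \nmid m$ (to diagonalize the local action via averaging) and the algebraic closedness of $k$ (to extract an $m$-th root of a unit). Step 2 is then a direct construction, and in the case $a = 0$ it is defined already over $W(k)$.
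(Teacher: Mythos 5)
Your proof is correct and is essentially the paper's argument: the paper normalizes $A$ to $k[[t]][y]/(y^m-t)$ (by Kummer theory, where you reach the same normal form via tame averaging) and then lifts it as $R[[T]][Y]/(Y^m-(T-x))$, which is precisely your construction $T \mapsto U^m + a$, with the same identification of the branch locus of the generic fiber via tameness of the different. The only slip is in your normalization: $u^m$ is $t$ times a unit \emph{power series} $c(t) \in k[[t]]^{\times}$, not a constant in $k^{\times}$; but since $p \nmid m$ and $k$ is algebraically closed, Hensel's lemma gives $c(t) = w(t)^m$ with $w(t)$ a Galois-invariant unit of $k[[t]]$, and replacing $u$ by $u/w(t)$ preserves the eigenvector property and yields $u^m = t$, so your argument goes through unchanged.
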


\begin{proof}
By Kummer theory, we may assume without loss of generality that $A$ is given by $k[[t]][y]/(y^m - t)$.  Think of $x$ as an element of $R$ with 
$v(x) > 0$ (see \S\ref{Sdiscs}).  Then choose $A_R$ to be given by $R[[T]][Y]/(Y^m - (T - x))$.  The reduction of $A_R$ is as desired.
If $x$ is not specified, then the lift $R[[T]][Y]/(Y^m - T)$ is defined over $W(k)$.
\end{proof}  

\begin{remark}
Proposition \ref{Pprimetop}, along with the local-global principle (Theorem \ref{Tlocalglobal}), gives another proof of Proposition 
\ref{Ptame}.
\end{remark}

Using Proposition \ref{Pprimetop}, we can reduce the local lifting problem for cyclic groups to the $p$-group case:
\begin{prop}\label{Preducetop}
Let $G \cong \ints/mp^n$, where $p \nmid m$.  If $\ints/p^n$ is a local Oort group, then $G$ is a local Oort group.
\end{prop}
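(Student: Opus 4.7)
The plan is to decompose the problem via the direct-product structure $G = \ints/p^n \times \ints/m$, lift each factor separately, and then combine the two actions. Writing $P = \ints/p^n$ and $C = \ints/m$, I first take the $P$-extension $A^C/k[[t]]$ and invoke the hypothesis that $P$ is a local Oort group to obtain a lift $B_R/R[[T]]$ with a $P$-action, for some finite $R/W(k)$.

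Next I normalize coordinates. Since $A/A^C$ is a totally ramified $C$-extension of complete DVRs with algebraically closed residue field, I can choose a uniformizer $u$ of $A$ so that $A = k[[u]]$, $A^C = k[[s]]$ with $s = u^m$, and the $C$-action is $u \mapsto \zeta_m u$. After enlarging $R$ to contain $\zeta_m$, I fix an isomorphism $B_R \cong R[[S]]$ whose reduction sends $S$ to $s$. I then define the candidate lift as $A_R = R[[U]]$ with $U^m = S$, and let $C$ act on $A_R$ by $U \mapsto \zeta_m U$; this is manifestly a lift of $A/A^C$ as a $C$-extension of $B_R$.

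The crux is to extend the $P$-action on $B_R$ to a commuting $P$-action on $A_R$ that reduces correctly modulo the maximal ideal of $R$. A generator $\sigma$ of $P$ acts on $B_R$ by $\sigma(S) = g(S)$ with $g(S)/S \in R[[S]]^{\times}$. Requiring $\sigma$ to commute with $C$ forces the form $\sigma(U) = U\cdot h(U^m)$, and then the identity $\sigma(U)^m = \sigma(S)$ forces $h(S)^m = g(S)/S$. Because $m \in R^{\times}$ and $g'(0) \in R^{\times}$ has an $m$-th root (Hensel, after possibly further enlarging $R$), such an $h \in R[[S]]^{\times}$ exists; among the $m$ choices differing by $\zeta_m$, I select the one whose reduction matches the prescribed action $\bar{\sigma}(u)/u$ of $\sigma$ on $A = k[[u]]$.

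The main obstacle, and the only nontrivial point, is to verify that the resulting $\sigma$ has order exactly $p^n$ as an automorphism of $A_R$, rather than some larger divisor of $mp^n$. Since $\sigma^{p^n}$ fixes $S = U^m$, we have $\sigma^{p^n}(U) = \zeta_m^{j} U$ for some $j \in \ints/m$. By the choice of $h$, the reduction of $\sigma$ has order $p^n$ on $A$, so reducing the equation $\sigma^{p^n}(U) = \zeta_m^j U$ modulo the maximal ideal yields $\bar{\zeta}_m^{j} = 1$ in $k$; since reduction is injective on the prime-to-$p$ roots of unity of $R$, this forces $\zeta_m^j = 1$ in $R$, hence $\sigma^{p^n} = \mathrm{id}$ on $A_R$. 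Thus $\langle \sigma, C\rangle \subseteq \Aut_R(A_R)$ is a subgroup isomorphic to $G$ whose action reduces to the original one, exhibiting $A_R/R[[T]]$ as the desired lift.
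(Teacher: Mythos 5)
Your overall strategy --- lift the $\ints/p^n$-subextension $A^C/k[[t]]$ by hypothesis, put the tame part on top as the Kummer cover $U^m=S$, and extend $\sigma$ by an $m$-th root of $g(S)/S$ --- is genuinely different from the paper's proof (which lifts the two cyclic quotient extensions of $k[[t]]$ separately, positions the branch point of the tame lift, takes the normalized fiber product over $R[[T]]$, and invokes the different criterion), and most of your argument is sound: the commutation computation forcing $\sigma(U)=U\,h(U^m)$, the choice of $h$ with prescribed reduction, and the order-exactly-$p^n$ argument via injectivity of reduction on prime-to-$p$ roots of unity are all fine. The genuine gap is the assertion that $\sigma(S)=g(S)$ with $g(S)/S\in R[[S]]^{\times}$. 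A continuous $R$-automorphism of $R[[S]]$ lifting an automorphism of $k[[s]]$ only satisfies $g(0)\in\mf{m}_R$, not $g(0)=0$. Requiring $g(0)=0$ says that $\sigma$ fixes the point $S=0$ of the generic fiber, i.e.\ that the given lift $B_R/R[[T]]$ has a totally ramified point (inertia equal to all of $\ints/p^n$) on its generic fiber, and that your coordinate $S$ --- already constrained to reduce to $s$ --- is centered at such a point. Nothing about ``the reduction of $S$ is $s$'' provides this, and if it fails then $g(S)/S$ is not a unit, no $h$ with $h^m=g(S)/S$ exists in $R[[S]]$, and $\sigma$ does not extend to $R[[U]]$ at all (it fails to preserve the branch locus $S=0$ of the Kummer cover). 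This is precisely the nontrivial geometric input of the whole proposition, and it is assumed silently in your write-up.

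The gap can be filled, but it needs an argument. For existence of a totally ramified point: if every generic-fiber inertia group of $B_R/R[[T]]$ were a proper subgroup of $\ints/p^n$, then the quotient $\ints/p$-extension $D=B_R^{H}/R[[T]]$, with $H\subset\ints/p^n$ of index $p$, would be unbranched on the generic fiber of the disc, while $D\otimes_R k$ is an order in the wildly ramified $\ints/p$-subextension of $A^C/k[[t]]$; this contradicts Proposition \ref{Pdifferent}, since it would give $0<\delta_s\le\delta_\eta=0$. Granting a fixed point $S=b$ (rational after enlarging $R$), replace $S$ by $S-b$: since $b\in\mf{m}_R$, the new coordinate still reduces to $s$, and the new $g$ satisfies $g(0)=0$ with $g'(0)$ a unit because the reduction of $\sigma$ is an automorphism of $k[[s]]$. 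With this inserted, your construction goes through and indeed avoids the different computation for the composite extension that the paper performs; note that the paper's proof uses the very same input at the corresponding spot, when it chooses the branch point of the $\ints/m$-lift to be branched of index $p^n$ in the generic fiber of the $\ints/p^n$-lift --- so the fact you need is true, but it is exactly the point that must be justified rather than taken for granted.
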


\begin{proof}
Let $H$ be the unique subgroup of $G$ of order $m$ and let $H'$ be the unique subgroup of order $p^n$.   
Given a $G$-cover $f: Y \to \Spec k[[t]]$, let $g: X \to \Spec k[[t]]$ (resp.\ $g': X' \to \Spec k[[t]]$) be the unique quotient $\ints/m$-extension 
(resp.\ $\ints/p^n$-extension).
Then the normalization of $X \times_{\Spec k[[t]]} X'$ is isomorphic to $Y$.  By assumption,
$g'$ lifts to a $\ints/p^n$-cover $g_R': X'_R \to \mc{D}$, with generic fiber $g_K'$.  
By Proposition \ref{Pprimetop}, $g$ lifts to a $\ints/m$-cover $g_R: X_R \to \mc{D}$, and
we can choose $g$ so that the unique branch point of the generic fiber $g_K$ of $g_R$ is branched of index $p^n$ 
in the generic fiber $g'_K$ of $g'$.

Let $X_R''$ be the normalization of $X_R \times_{\mc{D}} X_R'$.  Then the canonical map $f_R: X_R'' \to \mc{D}$ is clearly a birational
lift of $f$.  The degree of the different of $g$ (and of $g_K$) is $m-1$.  
Let $\delta$ be the degree of the different of $g'$ (and of $g_K'$).  
Using our assumptions on the branch loci of $g_K$ and
$g_K'$, one calculates the degree of the different of the generic fiber $f_K$ of $f_R$ to be
$m\delta + m - 1$.  On the other hand, the higher ramification groups $(H')^i$ for the upper numbering at the unique ramification point of
$g$ are the same as the corresponding groups $G^i$ for $f$, as long as $i > 0$ (\cite[IV, Prop.\ 14]{Se:lf}).  Also, $|(H')^0| = p^n$
and $G^0 = mp^n$.  Applying Herbrand's formula shows that $|H'_0| = p^n$, $|G_0| = mp^n$, and $|H'_i| = G_{mi}$ for $i \geq 1$.  Then
(\ref{Ebasicdifferent}) shows that the degree of the different of $f$ is also $m\delta + (m-1)$.  By Proposition 
\ref{Pdifferent}, we conclude that $f_R$ is a lift of $f$.
\end{proof}

\begin{remark}
For any $i \in \nats$, let $\zeta_i$ be a primitive $i$th root of unity.
One would like to claim that if the strong Oort conjecture (Conjecture \ref{Cstrongoort}) holds for $\ints/p^n$, then it also holds for $\ints/mp^n$, 
for any $m$ with $p \nmid m$.  However, if the lift of $g'$ in Proposition \ref{Preducetop} can be done over $W(k)[\zeta_{p^n}]$,
then the lift of $f$ in Proposition \ref{Preducetop} can be done over $W(k)[\zeta_{p^n}, \zeta_m] = W(k)[\zeta_{mp^n}]$ only if $g'_K$ has
a branch point of index $p^n$ defined over $\Frac(W(k)[\zeta_{mp^n}])$.  It does not seem clear that this must be the case.
\end{remark}

\subsection{$\ints/p^n$-extensions in characteristic $p$}\label{Scharp}
In attempting to prove the Oort conjecture, it is useful to have a somewhat explicit form for all $\ints/p^n$-extensions of $k((t))$.
It is well-known, by Artin-Schreier theory, that any $\ints/p$-extension of $k((t))$ is given by an equation $y^p - y = f(t)$,
where $f(t)$ is not a $\wp$th power in $k((t))$.  It is clear that $f(t)$ must have some term with negative degree, as otherwise, letting 
$\tilde{f} = f - f(0)$, we have that  
$a - \tilde{f} - \tilde{f}^p - \tilde{f}^{p^2} - \cdots$ is a $\wp$th root of $f$, where $a$ is a $\wp$th root of $f(0)$.  
In fact, after a change of variable in $t$, we may assume $f(t) = t^{-j}$, and in 
this case $j$ is the unique upper jump in the higher ramification filtration of the extension (see Proposition \ref{Ppnexplicit} below).  

It is less well-known that there is an explicit form for all $\ints/p^n$-extensions of $k((t))$, written as successive Artin-Schreier 
extensions.  The higher ramification filtration
(and thus the degree of the different) can also be read off from this form.  

\begin{prop}\label{Ppnexplicit}
There exist explicit polynomials $f_1, \ldots, f_{n-1}$ over $\FF_p$ in $1, \ldots, n-1$ variables respectively, such that for any given 
$\ints/p^n$-extension $L/k((t))$, there is a choice of $x_1, \ldots, x_n \in k((t))$ such that $L/k((t))$ is given (possibly after a change of variable in $t$)
by adjoining $y_1,\ldots, y_n$ satisfying
\begin{eqnarray*}
y_1^p - y_1 &=& x_1 \\
y_2^p - y_2 &=& f_1(y_1) + x_2 \\
&\vdots& \\
y_n^p - y_n &=& f_{n-1}(y_1, \ldots, y_{n-1}) + x_n.
\end{eqnarray*}
Furthermore, we can choose $x_1 = t^{-j}$, for some $j \geq 1$ not divisible by $p$, and we can choose $x_2, \ldots, x_n$ to be polynomials 
in $t^{-1}$ with no terms of degree divisible by $p$.  Conversely, every choice of $x_1, \ldots, x_n$ in this form gives a distinct 
$\ints/p^n$-extension.  
If $(u_1, \ldots, u_n)$ are the upper jumps of $L/k((t))$, then $u_1 = j$, and for $i > 1$, we have
$u_i = \max(\deg(x_i), pu_{i-1})$.

We can pick a generator $\sigma$ of $\ints/p^n$ such that $\sigma^{p^{i-1}}y_i = y_i + 1$ for $1 \leq i \leq n$. 
\end{prop}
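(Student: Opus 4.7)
The plan is to apply Artin-Schreier-Witt theory. By this theory, the group of continuous $\ints/p^n$-valued characters of $\Gal(\bar F/F)$ (where $F = k((t))$) is canonically isomorphic to $W_n(F)/\wp(W_n(F))$, where $W_n$ denotes the length-$n$ truncated Witt vectors and $\wp$ is the Witt-vector Artin-Schreier map (Witt Frobenius minus identity). A surjective character corresponds to a class whose additive order is exactly $p^n$, represented by some Witt vector $(a_1, \ldots, a_n) \in W_n(F)$; the associated $\ints/p^n$-extension $L$ of $F$ is generated by $(y_1, \ldots, y_n) \in W_n(\bar F)$ satisfying $\wp(y_1, \ldots, y_n) = (a_1, \ldots, a_n)$. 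Unravelling this identity componentwise---using that the Witt Frobenius in characteristic $p$ acts in coordinates as $(y_1, \ldots, y_n) \mapsto (y_1^p, \ldots, y_n^p)$, together with the universal Witt addition polynomials---yields $y_1^p - y_1 = a_1$ at the first slot and, at the $i$-th slot, a relation of the form $y_i^p - y_i = a_i + \tilde{f}_i(y_1, \ldots, y_{i-1}, a_1, \ldots, a_{i-1})$; substituting for $a_1, \ldots, a_{i-1}$ in favor of $y_1, \ldots, y_{i-1}$ via the preceding equations produces the claimed polynomials $f_{i-1} \in \FF_p[Y_1, \ldots, Y_{i-1}]$ and the system with $x_i := a_i$. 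A canonical generator $\sigma$ of $\ints/p^n$ acts via Witt translation $\sigma(y) = y + (1, 0, \ldots, 0)$, which unwinds to $\sigma^{p^{i-1}}(y_i) = y_i + 1$.

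The normalization exploits the observation that $\wp(0, \ldots, 0, b_i, 0, \ldots, 0) \in W_n(F)$ has its first $i-1$ components equal to zero and its $i$-th component equal to $b_i^p - b_i$. So one may inductively replace $a_i$ by any element of $a_i + \wp(F)$ without disturbing the previously normalized $a_1, \ldots, a_{i-1}$. Standard Artin-Schreier reduction---iteratively subtracting $\wp(c^{1/p} t^{-s})$ to eliminate a term $c t^{-ps}$ with $s \geq 1$, and using the surjectivity of $\wp: k \to k$ (valid since $k$ is algebraically closed) to kill any constant term---produces a unique representative for each coset of $\wp(F)$ in $F$ as a polynomial in $t^{-1}$ with no constant term and no term of degree divisible by $p$. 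This gives the desired form for $x_2, \ldots, x_n$, and the uniqueness of this representative forces distinct normal forms to give distinct extensions. For $x_1$, after the same reduction write $x_1 = t^{-j} \phi(t)$ with $j$ the highest nonzero index in the reduced representation (so $p \nmid j$) and $\phi \in k[[t]]^\times$ with $\phi(0) \neq 0$; Hensel's lemma applied to $Y^j - \phi(t)$ (whose derivative $jY^{j-1}$ is a unit at the root $Y \equiv \phi(0)^{1/j} \pmod t$ because $p \nmid j$) yields $\psi \in k[[t]]^\times$ with $\psi^j = \phi$, and setting $s := t/\psi$ (a new uniformizer of $F$) gives $x_1 = (\psi/t)^j = s^{-j}$; the remaining $x_i$ are then renormalized with respect to $s$.

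The upper jumps are computed by induction on $n$. By compatibility of the upper numbering with quotients (\cite[IV, Prop.\ 14]{Se:lf}), the first $n-1$ upper jumps of $L/F$ coincide with those of the subextension $L_{n-1} := F(y_1, \ldots, y_{n-1})$, which are $u_1, \ldots, u_{n-1}$ by the inductive hypothesis. The top upper jump $u_n$ is determined by analyzing the Artin-Schreier extension $L/L_{n-1}$, defined by $y_n^p - y_n = f_{n-1}(y_1, \ldots, y_{n-1}) + x_n$: its unique ramification jump $h_n$ equals the minimum value of $-v_{L_{n-1}}(R)$ over all representatives $R$ of $f_{n-1}(y_1, \ldots, y_{n-1}) + x_n$ modulo $\wp(L_{n-1})$, and this same $h_n$ is the top lower jump of $L/F$ by compatibility of the lower numbering with subgroups (\cite[IV, Prop.\ 2]{Se:lf}); Herbrand's formula $h_n = h_{n-1} + p^{n-1}(u_n - u_{n-1})$ then recovers $u_n$. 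The main technical step (essentially the content of Schmid's classical conductor formula for Witt-Artin-Schreier extensions) is to perform this reduction modulo $\wp(L_{n-1})$ until the $L_{n-1}$-valuation becomes coprime to $p$: terms whose $L_{n-1}$-valuation is divisible by $p$ can be saved by subtracting suitable $\wp$-images built from monomials in $y_1, \ldots, y_{n-1}$ and elements of $F$. Careful bookkeeping of how much valuation can be saved from the $-p^{n-1}\deg(x_n)$ contribution of $x_n$ versus the leading contribution of $f_{n-1}(y_1, \ldots, y_{n-1})$---combined with the inductive knowledge of $u_{n-1}$ and $h_{n-1}$---yields the stated formula $u_n = \max(\deg(x_n), p u_{n-1})$.
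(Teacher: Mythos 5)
Your proof follows essentially the same route as the paper, whose own ``proof'' is a citation: Schmid (see also \cite[\S3]{OP:wc}) for exactly the Artin--Schreier--Witt normal form, Galois action, and distinctness you derive by componentwise normalization of the Witt class, and Garuti \cite[Theorem 1.1]{Ga:ls} for the upper-jump formula, which you recover via the break of $L/L_{n-1}$ and Herbrand. The one caveat is that the ``careful bookkeeping'' yielding $u_n = \max(\deg(x_n), p\,u_{n-1})$ is the substantive content of Schmid's/Garuti's conductor computation, which you outline rather than carry out---precisely the step the paper outsources by citation.
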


\begin{proof}
That the equations and action of $\sigma$ can be chosen in this form is originally due to Schmid (\cite{Sc:af}), see also \cite[\S3]{OP:wc}.  
The formula for the upper jumps was proven by Garuti (\cite[Theorem 1.1]{Ga:ls}).
\end{proof}

Proposition \ref{Ppnexplicit} has the immediate corollary:
\begin{corollary}\label{Ccomparejumps}
The upper jumps $(u_1, \ldots, u_n)$ of the extension above satisfy $u_i \geq pu_{i-1}$ for $2 \leq i \leq n$.  If $u_i > pu_{i-1}$, then 
$p \nmid u_i$.
\end{corollary}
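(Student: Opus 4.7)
The plan is to deduce both assertions directly from the formula $u_i = \max(\deg(x_i), pu_{i-1})$ supplied by Proposition \ref{Ppnexplicit}, combined with the constraint on the $x_i$ given there.

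First I would handle the inequality $u_i \geq pu_{i-1}$: since $u_i$ is by definition a maximum one of whose arguments is $pu_{i-1}$, this is immediate.

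Second, I would consider the case $u_i > pu_{i-1}$. Then the maximum in $\max(\deg(x_i), pu_{i-1})$ must be achieved (strictly) by $\deg(x_i)$; in particular $u_i = \deg(x_i)$. Here Proposition \ref{Ppnexplicit} guarantees that $x_i$ is a polynomial in $t^{-1}$ in which no term has degree divisible by $p$. The degree $\deg(x_i)$ is by definition the exponent of some (nonzero) monomial $t^{-\deg(x_i)}$ appearing in $x_i$, so that exponent is not divisible by $p$. Hence $p\nmid u_i$.

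Since the statement is a one-line consequence of a result we are free to cite, there is no main obstacle; the only thing to be careful about is that the constraint ``no terms of degree divisible by $p$'' in Proposition \ref{Ppnexplicit} applies to the $x_i$ for $i\ge 2$, which is exactly the range in which the corollary is being asserted, so the argument goes through uniformly for all $2 \leq i \leq n$.
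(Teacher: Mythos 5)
Your argument is correct and is exactly the intended one: the paper gives no separate proof, calling the corollary an immediate consequence of Proposition \ref{Ppnexplicit}, and your two observations (the maximum formula gives $u_i \ge pu_{i-1}$, and strict inequality forces $u_i = \deg(x_i)$, which is prime to $p$ because $x_i$ has no terms of degree divisible by $p$) are precisely how that immediacy is meant to be unpacked.
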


\begin{remark}\label{Rwitt}
The \emph{Artin-Schreier-Witt theory} (detailed in \cite[\S3]{OP:wc} and \cite[pp.\ 330-331]{La:al})
says that for any field $L$ of characteristic $p$, there is an association between truncated Witt vectors 
$x \in W_n(L)$ of length $n$ and $\ints/p^n$-extensions of $L$. 
Furthermore, $x, y \in W_n(L)$ are associated to the same extension iff there exists $z \in W_n(L)$ with $x - y = F(z) - z$, 
where $F:W_n(L) \to W_n(L)$ is the Frobenius map.  
If $L = k((t))$, then the length $n$ Witt vector $(x_1, \ldots, x_n) \in W_n(L)$ is associated to the extension in 
\ref{Ppnexplicit}.
\end{remark}

\subsection{Lifting $\ints/p$-extensions}\label{Szplifting}
The following proposition solves the local lifting problem for $\ints/p$-extensions.

\begin{theorem}\label{Tzplifting}
The group $\ints/p$ is a local Oort group.  In particular, any $G$-extension $A/k[[t]]$ lifts over $W(k)[\zeta_p],$ where $\zeta_p$
is a primitive $p$th root of unity. 
\end{theorem}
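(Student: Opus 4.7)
By Proposition \ref{Ppnexplicit}, after a change of variable in $k[[t]]$ I may assume that $A/k[[t]]$ is given by $y^p - y = t^{-j}$ for some integer $j \geq 1$ coprime to $p$. The unique upper (equivalently, lower) jump is then $j$, so Lemma \ref{Ljumpsdifferent} gives the degree of the different of $A/k[[t]]$ as $\delta_s = (j+1)(p-1)$. I set $R = W(k)[\zeta_p]$ with uniformizer $\lambda = \zeta_p - 1$; the cyclotomic relation $\sum_{i=0}^{p-1} (1+\lambda)^i = 0$ gives $p = -\lambda^{p-1} u$ for some $u \in 1 + \lambda R$, so in particular $p/\lambda^{p-1} \equiv -1 \pmod{\lambda}$.

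As the candidate lift I propose the Kummer extension of $\Frac(R[[T]])$ defined by
\[ Y^p = 1 + \lambda^p T^{-j}, \]
equipped with the $\ints/p$-action $\sigma(Y) = \zeta_p Y$. Since $v_T(1 + \lambda^p T^{-j}) = -j$ is coprime to $p$, the right-hand side is not a $p$-th power, so this is genuinely a degree-$p$ Galois extension; let $A_R$ denote the integral closure of $R[[T]]$ in it. Substituting $Y = 1 + \lambda Z$ rewrites the defining relation as
\[ Z^p + \sum_{k=1}^{p-1} \frac{\binom{p}{k}}{\lambda^{p-k}} Z^k = T^{-j}, \]
and since $v_\lambda(\binom{p}{k}) = p-1$ for $1 \leq k \leq p-1$, the coefficient of $Z^k$ has $\lambda$-adic valuation $k-1$. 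Reducing mod $\lambda$ therefore kills every non-leading term except the linear one, whose coefficient $p/\lambda^{p-1}$ reduces to $-1$; hence I recover $z^p - z = t^{-j}$, and $\sigma(Z) = \zeta_p Z + 1$ reduces to $z \mapsto z + 1$. Thus $A_R/R[[T]]$ is a birational lift of $A/k[[t]]$ in the sense of Question \ref{Qbirational}.

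To promote this birational lift to an actual lift via Proposition \ref{Pdifferent}, I need only match the degree $\delta_\eta$ of the different of the generic fiber to $\delta_s$. The generic fiber is the Kummer extension cut out by $Y^p = g(T)$ with $g(T) = 1 + \lambda^p T^{-j}$, whose ramification places in the open unit disc $\{|T| < 1\}$ are precisely those at which $v(g) \not\equiv 0 \pmod{p}$: the order-$j$ pole at $T = 0$, and the $j$ simple zeros cut out by $T^j = -\lambda^p$, all of which satisfy $|T| = |\lambda|^{p/j} < 1$ and so lie inside the open disc. Each of these $j + 1$ branch places is tamely ramified with index $p$ and contributes $p-1$ to the different, yielding $\delta_\eta = (j+1)(p-1) = \delta_s$, and Proposition \ref{Pdifferent} completes the proof. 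The part requiring the most care is this last different computation: one must verify that the listed places exhaust the branch locus contributing to $\delta_\eta$ in the sense of Proposition \ref{Pdifferent}---in particular that no branch place lurks at the boundary $|T| = 1$ or at $T = \infty$---which is ensured respectively by $|\lambda|^{p/j} < 1$ and by $g(\infty) = 1$.
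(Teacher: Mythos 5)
Your proposal is correct and follows essentially the same route as the paper: reduce to $y^p - y = t^{-u_1}$ via Proposition \ref{Ppnexplicit}, lift by the Kummer equation $Z^p = 1 + \lambda^p T^{-u_1}$ over $W(k)[\zeta_p]$, check the reduction via the substitution $Z = 1 + \lambda Y$, and conclude with the tame different count $(u_1+1)(p-1)$ on the generic fiber together with the different criterion (Proposition \ref{Pdifferent}). The only differences are cosmetic (you spell out the binomial-coefficient valuations and the location of the zeros of $1+\lambda^p T^{-j}$ more explicitly).
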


\begin{proof}
By Proposition \ref{Ppnexplicit}, $A$ is the normalization of $k[[t]]$ in $L/k((t))$, where
$L/k((t))$ is given by $y^p - y = t^{-u_1}$, and $u_1$ is the upper jump.  Let $R= W(k)[\zeta_p]$, and let $\lambda = \zeta_p - 1$.
Then $v(\lambda^{p-1} + p) > 1$.  Consider the integral closure $A_R$ of $R[[T]]$ in the Kummer extension of $\Frac(R[[T]])$ 
given by $$Z^p = 1 + \lambda^p T^{-u_1}.$$  Making the substitution $Z = 1 + \lambda Y$, we obtain
$$(\lambda Y)^p + p\lambda Y  + o(p^{p/(p-1)})= \lambda^p T^{-u_1},$$ where $o(p^{p/(p-1)})$ represents terms with coefficients of valuation
greater than $\frac{p}{p-1}$.  This reduces to $y^p - y = t^{-u_1}$.  So we have constructed a birational lift.  

The degree of the different of $A/k[[t]]$ is $(u_1+1)(p-1)$, by Lemma \ref{Ljumpsdifferent}.
On the other hand, the generic fiber of $\Spec A_R \to \Spec R[[T]]$ is branched at exactly $u_1 + 1$ points in the 
unit disc ($T = 0$ and $T$ equals each $u_1$th root of $-\lambda^p$).  Since the ramification is tame, the degree of the different of $A/R[[T]]$ is 
$(u_1 + 1)(p-1)$ as well.  By Proposition \ref{Pdifferent}, our birational lift is an actual lift.
\end{proof}

\begin{remark}\label{Rzplifting}
Theorem \ref{Tzplifting}, along with Proposition \ref{Preducetop} and the local-global principle, shows that, for $G \cong \ints/pm$ with $p \nmid m$, 
all $G$-actions on curves lift to characteristic zero.  The original proof of this is due to Oort-Sekiguchi-Suwa (\cite{OSS:ask}), and is
global in nature.  In particular, the proof relies on intricate calculations involving extensions of group schemes and deformations of generalized 
Jacobians.  The local proof above, due to Green and Matignon (\cite{GM:lg}), is much simpler, once one admits the local-global principle.
\end{remark}

\subsection{Sekiguchi-Suwa Theory}\label{Ssstheory}
Underlying the calculation in the proof of Theorem \ref{Tzplifting} is the so-called ``Kummer-Artin-Schreier theory" in degree $p$.  Its generalization to 
degree $p^n$ by Sekiguchi and Suwa (\cite{SS:kasw1}, \cite{SS:kasw2}) is called the ``Kummer-Artin-Schreier-Witt theory."  While a full accounting of 
the theory is well beyond the scope of this paper, we will give a brief exposition below.  For a more detailed exposition (although still not as detailed as 
the papers of Sekiguchi and Suwa), see \cite{MRT:ss}.\\

\subsubsection{Kummer-Artin-Schreier theory.}\label{Skas}
Phrased in the language of group schemes, Kummer theory (of degree $p$) studies torsors under the group scheme $\mu_p$, whereas Artin-Schreier
theory studies torsors under the group scheme $\ints/p$ in characteristic $p$.  Now, if $A/k[[t]]$ is a $\ints/p$-extension with a lift 
$A_R/R[[T]]$, then $A_R/R[[T]]$ is a torsor under a finite, flat group scheme over
 $R$ whose generic fiber is $\mu_p$ and whose special fiber is $\ints/p$ (of course, if $R$ contains the $p$th roots of unity, then $\mu_p \cong \ints/p$,
 but it is more natural to think of the generic fiber as $\mu_p$).
 These are rank $p$ subgroup schemes of $\Gp_m$ and 
 $\Gp_a$, respectively.  So in order to understand lifts of $\ints/p$-extensions, one approach is to study group schemes
 over $R$ whose generic fiber is $\Gp_m$ and whose special fiber is $\Gp_a$.
 
For any $\mu \in \mf{m},$ the maximal ideal of $R$, consider the group scheme 
$$\mc{G}^{(\mu)} := \Spec(R[x, \frac{1}{1 + \mu x}])$$ with comultiplication law
$$(x, y) \mapsto \mu xy + x + y,$$ coinverse $$x \mapsto -\frac{x}{1 + \mu x},$$ and counit $x \mapsto 0.$
There is a map from $\mc{G}^{(\mu)} \to \Gp_m := R[x, \frac{1}{x}]$ given by $x \to 1+ \mu x$, which is an isomorphism on the generic fiber.  
Indeed, a good way to think about the $R$-points
of $\mc{G}^{(\mu)}$ is as elements of $R$ with multiplication defined by $x * y = z$, where $z$ is such that $1 + \mu z = (1 + \mu x) (1 + \mu y)$.
Clearly, the special fiber of $\mc{G}^{(\mu)}$ is isomorphic to $\Gp_a$.  

Suppose $R$ contains the $p$th roots of unity.  Let $\mu = \lambda$ as in the proof of Theorem \ref{Tzplifting}, so $\mu =  \zeta_p - 1$, 
where $\zeta_p$ is a nontrivial $p$th root of unity.  Then there is a group scheme 
morphism $\psi: \mc{G}^{(\lambda)} \to \mc{G}^{(\lambda^p)}$ given by
\begin{equation}\label{Epsi}
x \mapsto \frac{(1 + \lambda x)^p - 1}{\lambda^p},
\end{equation}
and this map is surjective in the \'{e}tale topology. 
Note that $\psi$ is the map that makes the following diagram commute, where the
vertical arrows are the maps to $\Gp_m$ in the previous paragraph:
\[
\xymatrix{
\mc{G}^{(\lambda)} \ar[r]^{\psi} \ar[d] & \mc{G}^{(\lambda^p)} \ar[d] \\
\Gp_m \ar[r]^{(\cdot)^p} & \Gp_m.
}
\]
So the generic fiber of $\psi$ is isomorphic to the $p$th power map on $\Gp_{m, K}$ and the special fiber of $\psi$, after a similar calculation
to the proof of Theorem \ref{Tzplifting}, is the map $x \mapsto x^p - x$ on $\Gp_{a, k}$.
Since $R$ contains the $p$th roots of unity, the kernel of $\psi$ is the constant group scheme $\ints/p$.  
Thus we have an exact sequence of \'{e}tale sheaves over $R$:
\begin{equation}\label{Ekas}
0 \to \ints/p \to \mc{G}^{(\lambda)} \to \mc{G}^{(\lambda^p)} \to 0.
\end{equation}
The generic fiber of (\ref{Ekas}) is just the Kummer exact sequence over $K$, whereas the special fiber is just the Artin-Schreier sequence over $k$.

\begin{prop}\label{Psszp}
Suppose $R$ contains the $p$th roots of unity.  If $B$ is a flat local $R$-algebra, then any unramified $\ints/p$-cover $\Spec C \to \Spec B$ is given by 
a Cartesian diagram of the form
\[
\xymatrix{
\Spec C \ar[r] \ar[d] & \mc{G}^{(\lambda)} \ar[d]^{\psi} \\
\Spec B \ar[r] & \mc{G}^{(\lambda^p)}
}
\]
That is, $C \cong B[Y]/(\frac{(1 + \lambda Y)^p - 1}{\lambda^p} - X))$, with $X \in B$.
\end{prop}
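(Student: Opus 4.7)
The strategy is to reformulate the statement cohomologically. An unramified $\ints/p$-cover of $\Spec B$ is an \'etale $\ints/p$-torsor, so it corresponds to a class in $H^1_{et}(\Spec B, \ints/p)$. The short exact sequence (\ref{Ekas}) of \'etale sheaves yields a long exact sequence
\[
\mc{G}^{(\lambda)}(B) \xrightarrow{\psi_*} \mc{G}^{(\lambda^p)}(B) \xrightarrow{\delta} H^1_{et}(\Spec B, \ints/p) \longrightarrow H^1_{et}(\Spec B, \mc{G}^{(\lambda)}),
\]
and an element $X \in \mc{G}^{(\lambda^p)}(B)$ maps under $\delta$ to exactly the class of the pullback torsor $\Spec B \times_{\mc{G}^{(\lambda^p)}} \mc{G}^{(\lambda)}$ described in the statement. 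So the proposition is equivalent to the surjectivity of $\delta$, which follows from $H^1_{et}(\Spec B, \mc{G}^{(\lambda)}) = 0$.

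For the vanishing, I would exploit that $\mc{G}^{(\lambda)}$ is a smooth affine commutative $R$-group scheme whose generic fiber is $\Gp_m$ and whose special fiber is $\Gp_a$. Over the generic fiber $B \otimes_R K$, $\mc{G}^{(\lambda)}$-torsors correspond to line bundles, which are trivial since $B$ is local; over the special fiber $B/\lambda B$, they correspond to Zariski $H^1$ of the structure sheaf, which vanishes on an affine. Under a henselianness hypothesis (tacitly in force for the applications of interest, notably $B = R[[T]]$), a fibre-by-fibre descent argument assembles these into $H^1_{et}(\Spec B, \mc{G}^{(\lambda)}) = 0$.

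An alternative, more hands-on route, which is really what underlies Sekiguchi--Suwa's construction, bypasses $H^1$ altogether. Given an \'etale $\ints/p$-cover $C/B$, reduce modulo the uniformizer $\lambda$: by Artin--Schreier theory, $\overline C \cong \overline B[y]/(y^p - y - \overline f)$ for some $\overline f \in \overline B$, with a chosen generator $\sigma \in \ints/p$ acting by $y \mapsto y + 1$. Lift $\overline f$ to any $X \in B$ and form $C' = B[Y]/\bigl(\frac{(1 + \lambda Y)^p - 1}{\lambda^p} - X\bigr)$, equipped with the $\ints/p$-action inherited from translation by $1 \in \ints/p \subset \mc{G}^{(\lambda)}$, which in the coordinate $Y$ works out to $\sigma(Y) = 1 + \zeta_p Y$. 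The binomial expansion, together with the estimates $v\bigl(\binom{p}{k}/\lambda^{p-k}\bigr) > 0$ for $2 \leq k \leq p-1$ and Wilson's theorem (which gives $p/\lambda^{p-1} \equiv -1 \pmod \lambda$), shows that modulo $\lambda$ the defining equation becomes $y^p - y - \overline f$ and the action becomes $y \mapsto y + 1$. Hence $\overline{C'} \cong \overline C$, and under a henselianness hypothesis on $B$ the equivalence between categories of \'etale $B$-algebras and \'etale $\overline B$-algebras lifts this to $C \cong C'$. The main subtlety is that the bare hypothesis ``flat local'' is a little weaker than either approach strictly wants; in every case of interest $B$ is henselian or complete, which is what makes the mod-$\lambda$ lifting step actually work.
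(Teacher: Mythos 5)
Your cohomological reduction is exactly the paper's: the long exact sequence attached to (\ref{Ekas}), together with the standard description of the coboundary map, identifies unramified $\ints/p$-covers of $\Spec B$ with pullbacks of $\psi$ along maps $\Spec B \to \mc{G}^{(\lambda^p)}$ once one knows $H^1(\Spec B, \mc{G}^{(\lambda)}) = 0$, and the paper proceeds in precisely this way, citing \cite[Theorem 2.2]{SS:kasw1} for that vanishing. The genuine gap is your justification of the vanishing. Triviality of torsors on the generic and special fibres does not assemble into triviality over $B$ without a real argument---that assembly step is the entire content of the statement---and a ``fibre-by-fibre descent argument'' under ``a henselianness hypothesis'' is not such an argument (note also that $B \otimes_R K$ need not be local, so ``line bundles are trivial since $B$ is local'' is not justified on the generic fibre). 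Worse, the henselian fallback is not available: the proposition assumes only that $B$ is a flat local $R$-algebra, and in the paper's actual use of it (Remark \ref{Rzpliftinginterp} and \S\ref{Szp2lifting}) one takes $B = R[[T]]_{(\pi)}$, which is neither complete nor henselian: for $p$ odd the monic polynomial $Y^2 - (1 + 4\pi T^{-1})$ has simple roots modulo the maximal ideal $\pi B$ but no root in $B$, because $(T+4\pi)/T$ has odd valuation at the prime $(T)$ of the UFD $R[[T]]$ and hence is not a square in $\Frac(R[[T]])$. The same objection defeats your ``hands-on'' alternative, whose lifting step requires $(B, \lambda B)$ to be a henselian pair.

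The vanishing does hold for every flat local $R$-algebra, and a complete argument (essentially the d\'{e}vissage behind \cite[Theorem 2.2]{SS:kasw1}) uses the embedding $\mc{G}^{(\lambda)} \hookrightarrow \Gp_m$, $x \mapsto 1 + \lambda x$, rather than the two fibres: on the small \'{e}tale site of $\Spec B$ one has an exact sequence of sheaves $1 \to \mc{G}^{(\lambda)} \to \Gp_m \to i_*\Gp_{m, B/\lambda B} \to 1$, where $i$ is the closed immersion defined by $\lambda$; exactness only needs $\lambda$ to be a nonzerodivisor on \'{e}tale $B$-algebras, which follows from flatness of $B$ over $R$. The resulting exact sequence $B^{\times} \to (B/\lambda B)^{\times} \to H^1(\Spec B, \mc{G}^{(\lambda)}) \to \Pic(B)$ then gives the vanishing from locality of $B$ alone: units of $B/\lambda B$ lift to units of $B$ because $B$ is local, and $\Pic$ of a local ring is trivial. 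Replacing your second and third paragraphs by this argument, or simply by the citation the paper uses, closes the gap; no completeness or henselian hypothesis is needed.
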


\begin{proof}
The short exact sequence (\ref{Ekas}) gives an exact sequence
$$0 \to \mc{G}^{(\lambda^p)}(B)/\mc{G}^{(\lambda)}(B) \to H^1(\Spec B, \ints/p) \to H^1(\Spec B, \mc{G}^{(\lambda)}).$$  
But $H^1(\Spec B, \mc{G}^{(\lambda)}) = 0$ (\cite[Theorem 2.2]{SS:kasw1}), so $\ints/p$-torsors over $\Spec B$ are in one-to-one
correspondence with elements $\mc{G}^{(\lambda^p)}(B)/\mc{G}^{(\lambda)}(B)$.  Such elements can be interpreted exactly as elements 
$X \in B$, where $X_1$ is considered equivalent to $X_2$ if $(1 + \lambda^p X_1)/(1 + \lambda^p X_2) = (1 + \lambda Y)^p$ for some $Y$ in $B$.
The proposition follows by the standard expression of the coboundary map from degree $0$ to degree $1$ in \'{e}tale cohomology.
\end{proof}

\begin{remark}\label{Rzpliftinginterp}
If $\pi$ is a uniformizer of $R$, and $B = R[[T]]_{(\pi)}$, then $B$ is a local $R$-algebra with residue field $k((t))$.
The lift exhibited in Theorem \ref{Tzplifting} comes (birationally) from choosing $X = T^{-u_1}$ in Proposition \ref{Psszp}.  So in some sense, we
can say that our lift of a $\ints/p$-extension comes from the Kummer-Artin-Schreier theory.  In \S\ref{Szp2lifting}, we will discuss how lifts of
$\ints/p^2$-extensions come from the Kummer-Artin-Schrier-Witt theory described below.
\end{remark}

\subsubsection{Kummer-Artin-Schreier-Witt theory.}\label{Skasw}
The point of Kummer-Artin-Schreier-Witt theory (or \emph{Sekiguchi-Suwa theory}) 
is to generalize the exact sequence (\ref{Ekas}) to a sequence of group schemes with kernel 
$\ints/p^n$, and to generalize Proposition \ref{Psszp} to $\ints/p^n$-covers.  In this section, we will concentrate on the theory itself, and in 
\S\ref{Szp2lifting}, we will show how this is applied to the lifting problem.  

Assume that $R$ contains the $p^n$th roots of unity.  
We seek an exact sequence of group schemes over $R$ whose generic fiber is the Kummer-like sequence 
\begin{equation}\label{Egenkummer}
1 \to \mu_{p^n} \cong \ints/p^n \to (\Gp_m)^n \stackrel{\phi_n}{\to} (\Gp_m)^n \to 1,
\end{equation} 
where the surjection $\phi_n$ is given by
$$(x_1, \ldots, x_n) \mapsto (x_1^p, x_2^p/x_1, x_3^p/x_2, \ldots, x_n^p/x_{n-1}),$$ 
and whose special fiber is the \emph{Artin-Schreier-Witt} sequence
\begin{equation}\label{Eartinschreierwitt}
1 \to \ints/p^n \to W_n \stackrel{\wp}{\to} W_n.
\end{equation}
Here $W_n$ is the scheme of length $n$ Witt vectors over $k$, which is an $n$-fold extension
of $\Gp_a$'s, and $\wp$ is the map $F - Id$, where $F$ is the Frobenius map.  In the Kummer-like sequence, since $R$ contains the $p^n$th 
roots of unity, we have $\mu_{p^n} \cong \ints/p^n$, and we can think of the injection as sending $1$ to $(\zeta_p, \zeta_{p^2}, \ldots, \zeta_{p^n})$,
where each $\zeta_{p^i}$ is a $p^i$th root of unity and $\zeta_{p^j}^{p^{j-i}} = \zeta_{p^i}$ for $j \geq i$.  

\begin{prop}[\cite{SS:kasw1}, Theorem 7.1 or \cite{SS:kasw2}, Theorem 8.1]\label{Pkaswtheory}
For each positive integer $n$, there exists a flat group scheme $\mc{W}_n$ over $R$ (called a \emph{Kummer-Artin-Schreier-Witt group scheme})
that fits into the exact sequence 
\begin{equation}\label{Ekasw}
0 \to \ints/p^n \to \mc{W}_n \stackrel{\psi_n}{\to} \mc{V}_n := \mc{W}_n/(\ints/p^n) \to 0
\end{equation}
of group schemes over $R$.
The special fiber of (\ref{Ekasw}) is isomorphic to the Artin-Schreier-Witt exact sequence (\ref{Eartinschreierwitt}) over $k$, and the generic fiber
is isomorphic to the Kummer-like exact sequence (\ref{Egenkummer}) over $K$.  Furthermore, $\mc{W}_n$ is an $n$-fold extension of 
$\mc{G}^{(\lambda)}$'s, and as a scheme is given by
$$\Spec R[Y_1, \ldots, Y_n, \frac{1}{1+\lambda Y_1}, \frac{1}{F_1(Y_1)+\lambda Y_2}, \ldots, \frac{1}{F_{n-1}(Y_1, \ldots, Y_{n-1}) + \lambda Y_n}],$$ 
for explicitly determined polynomials $F_1, \ldots, F_{n-1}$.  Also, $\mc{V}_n$ is an $n$-fold extension of 
$\mc{G}^{(\lambda^p)}$'s, and as a scheme is given by
$$\Spec R[X_1, \ldots, X_n, \frac{1}{1+\lambda^p X_1}, \frac{1}{G_1(X_1)+\lambda^p X_2}, \ldots, \frac{1}{G_{n-1}(X_1, \ldots, X_{n-1}) + 
\lambda^p X_n}],$$ for explicitly determined polynomials $G_1, \ldots, G_{n-1}$.
\end{prop}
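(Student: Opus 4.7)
The plan is to proceed by induction on $n$. The base case $n=1$ is exactly the content of Kummer-Artin-Schreier theory developed in \S\ref{Skas}: we take $\mc{W}_1 = \mc{G}^{(\lambda)}$, $\mc{V}_1 = \mc{G}^{(\lambda^p)}$, and $\psi_1$ as in (\ref{Epsi}), and then (\ref{Ekas}) is the required sequence. For the inductive step, assume that $\mc{W}_{n-1}$, $\mc{V}_{n-1}$, and $\psi_{n-1}$ have been constructed, together with explicit polynomials $F_1,\ldots,F_{n-2}$ and $G_1,\ldots,G_{n-2}$ describing them as iterated extensions of $\mc{G}^{(\lambda)}$'s and $\mc{G}^{(\lambda^p)}$'s respectively.

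To build $\mc{W}_n$ from $\mc{W}_{n-1}$, I would seek an extension of group schemes $0 \to \mc{G}^{(\lambda)} \to \mc{W}_n \to \mc{W}_{n-1} \to 0$ with the correct fibers. Such an extension is encoded by a symmetric Hochschild $2$-cocycle $c: \mc{W}_{n-1} \times \mc{W}_{n-1} \to \mc{G}^{(\lambda)}$. The key requirement is that $c$ interpolate between two known $2$-cocycles: on the special fiber (setting $\lambda = 0$), $c$ must reduce to the Witt vector addition polynomial $S_{n-1}$ that extends $W_{n-1}$ by $\Gp_a$ inside $W_n$; on the generic fiber, after the change of variable $y \mapsto (1+\lambda y - 1)/\lambda$ that identifies $\mc{G}^{(\lambda)}_K$ with $\Gp_{m,K}$, the cocycle must recover the extension of $(\Gp_m)^{n-1}$ by $\Gp_m$ appearing in the flag that underlies the Kummer-like sequence (\ref{Egenkummer}). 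The polynomial $F_{n-1}$ is then exactly the $R$-polynomial whose reduction is $S_{n-1}$ and whose generic fiber (after the change of variable) describes this flag extension; its existence is the heart of Sekiguchi-Suwa's construction, and an analogous argument with $\lambda$ replaced by $\lambda^p$ produces $G_{n-1}$ and $\mc{V}_n$.

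Having produced $\mc{W}_n$ and $\mc{V}_n$, the map $\psi_n$ is defined to extend $\psi_{n-1}$ on the first $n-1$ coordinates and to act on the new coordinate by essentially the same formula as (\ref{Epsi}), appropriately corrected by the $F$'s and $G$'s so that it is a group scheme homomorphism. Exactness of (\ref{Ekasw}) can then be checked fiberwise: on the generic fiber, the kernel of $\psi_n$ is the kernel of $(x_1,\ldots,x_n) \mapsto (x_1^p, x_2^p/x_1, \ldots)$ on $(\Gp_m)^n$, which is $\mu_{p^n} \cong \ints/p^n$; on the special fiber, it is the kernel of $F - \mathrm{Id}$ on $W_n$, which is also $\ints/p^n$. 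Since $\mc{W}_n$ is $R$-flat by construction and the kernel is an $R$-flat finite group scheme of the same rank $p^n$ on both fibers, the kernel is the constant group scheme $\ints/p^n$ over $R$.

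The main obstacle is the explicit construction and the verification that $F_{n-1}$ and $G_{n-1}$ actually yield commutative group schemes --- that is, verifying the $2$-cocycle conditions (associativity and symmetry of the resulting group law) as identities in $R[Y_1,\ldots,Y_n]$ rather than merely on the two fibers. This requires working with the Artin-Hasse--type exponential series that Sekiguchi and Suwa use to $\lambda$-deform the Witt addition polynomials, and checking that the congruences satisfied by these deformed polynomials are strong enough to force the cocycle identities to hold integrally over $R$. Once this technical input is granted, the rest of the proposition --- the description of $\mc{W}_n$ and $\mc{V}_n$ as explicit spectra and the identification of both fibers --- follows formally from the inductive construction.
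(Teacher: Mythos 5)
The paper does not actually prove this proposition: it is imported wholesale from Sekiguchi--Suwa (\cite{SS:kasw1}, \cite{SS:kasw2}), and Remark \ref{Rsscomplicated} explicitly warns that the proof requires extraordinarily complicated calculations (already for $n=3$, computing $G_2$ takes five pages). Measured against that, your proposal is a plausible description of the \emph{shape} of the Sekiguchi--Suwa construction --- induction on $n$, realizing $\mc{W}_n$ as an extension of $\mc{W}_{n-1}$ by $\mc{G}^{(\lambda)}$ classified by a symmetric $2$-cocycle, with the cocycle obtained by $\lambda$-deforming the Witt addition polynomials via Artin--Hasse-type exponentials --- but it is not a proof. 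The existence of polynomials $F_{n-1}$ and $G_{n-1}$ defined over $R$ that satisfy the cocycle identities \emph{integrally} (so that one gets an honest commutative group scheme over $R$, not just correct fibers) and that simultaneously reduce to Witt vector addition modulo $\pi$ and to the multiplicative flag underlying (\ref{Egenkummer}) on the generic fiber is precisely the content of the theorem; your proposal defers exactly this step as ``technical input to be granted.'' Since every other assertion in the statement (the explicit presentations of $\mc{W}_n$ and $\mc{V}_n$, the identification of the two fibers of (\ref{Ekasw})) is downstream of that construction, what remains after the deferral is essentially a restatement of the proposition rather than an argument for it.

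Two secondary gaps are worth flagging as well. First, $\mc{V}_n$ is \emph{defined} as the quotient $\mc{W}_n/(\ints/p^n)$; constructing an analogous group scheme with $\lambda$ replaced by $\lambda^p$ does not by itself show that this quotient has the stated explicit presentation. One must produce $\psi_n$ as a faithfully flat homomorphism onto that explicit scheme (or otherwise identify the quotient), and your phrase ``appropriately corrected by the $F$'s and $G$'s so that it is a group scheme homomorphism'' conceals a genuine computation. Second, the fiberwise exactness argument assumes without justification that $\ker\psi_n$ is finite and flat over $R$: a closed subgroup scheme of an affine (non-proper) group scheme with finite fibers need not be finite over $R$, so flatness and finiteness of the kernel (equivalently, that $\psi_n$ is an isogeny) require an argument beyond computing the two fibers. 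Granting finite flatness, the \'{e}tale special fiber together with $R$ being strictly henselian does force the kernel to be constant, so that part is repairable; the part that cannot be repaired at the level of an outline is the construction and verification of the $F_i$ and $G_i$ themselves, which is where the real work of \cite{SS:kasw1} and \cite{SS:kasw2} lies.
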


\begin{remark}\label{Rsscomplicated}
While there is an explicit algorithm to calculate the polynomials $F_i$ and $G_i$, the calculation gets extremely complicated as $n$ gets large.  Indeed,
for $n=3$, determining $G_2$ in \cite{SS:kasw2} takes five pages.  Likewise, the proof of Proposition \ref{Pkaswtheory} requires extraordinarily
complicated calculations.
\end{remark}  

\begin{remark}\label{Rss2gm}
There are morphisms $\mc{W}_n \to (\Gp_m)^n$ and $\mc{V}_n \to (\Gp_m)^n$, 
given by 
$$\alpha^{(n)}: (Y_1, \ldots, Y_n) \mapsto (1 + \lambda Y_1, F_1(Y_1) + \lambda Y_2, \ldots, F_{n-1}(Y_1, \ldots, Y_{n-1}) + \lambda Y_n)$$
and
$$\beta^{(n)}: (X_1, \ldots, X_n) \mapsto (1 + \lambda^p X_1, G_1(X_1) + \lambda^p X_2, \ldots, G_{n-1}(X_1, \ldots, X_{n-1}) + \lambda^p X_n),$$
respectively.  These maps are isomorphisms on the generic fibers, and the diagram
\begin{equation}\label{Essn}
\xymatrix{
\Spec \mc{W}_n \ar[r]^{\alpha^{(n)}} \ar[d]^{\psi_n} & (\Gp_m)^n\ar[d]^{\phi_n} \\
\Spec \mc{V}_n \ar[r]^{\beta^{(n)}} & (\Gp_m)^n
}
\end{equation}
commutes, where $\phi_n$ is given by (\ref{Egenkummer}) and $\psi_n$ is given by (\ref{Ekasw}).
\end{remark}

We state the generalization of Proposition \ref{Psszp}:

\begin{prop}[\cite{SS:kasw1}, Theorem 3.8 or \cite{SS:kasw2}, Theorem 3.8]\label{Psszpn}
Suppose $R$ contains the $p^n$th roots of unity.  If $B$ is a flat local $R$-algebra, then any unramified $\ints/p^n$-cover $\Spec C \to \Spec B$ is 
given by a Cartesian diagram of the form
\[
\xymatrix{
\Spec C \ar[r] \ar[d] & \mc{W}_n \ar[d]^{\psi_n} \\
\Spec B \ar[r] & \mc{V}_n = \mc{W}_n/(\ints/p^n)
}
\]
\end{prop}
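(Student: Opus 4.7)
The plan is to mimic the proof of Proposition \ref{Psszp}, but with the exact sequence (\ref{Ekasw}) replacing (\ref{Ekas}). The long exact sequence in \'{e}tale cohomology attached to (\ref{Ekasw}) reads
\[
0 \to \ints/p^n \to \mc{W}_n(B) \stackrel{\psi_n}{\to} \mc{V}_n(B) \to H^1(\Spec B, \ints/p^n) \to H^1(\Spec B, \mc{W}_n),
\]
so once one knows that $H^1(\Spec B, \mc{W}_n) = 0$, the $\ints/p^n$-torsors over $\Spec B$ are classified by the cokernel $\mc{V}_n(B)/\psi_n(\mc{W}_n(B))$.

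The main work will be to prove $H^1(\Spec B, \mc{W}_n) = 0$, which I would do by induction on $n$, using the fact from Proposition \ref{Pkaswtheory} that $\mc{W}_n$ is an $n$-fold extension of copies of $\mc{G}^{(\lambda)}$. Concretely, at each layer of the tower there is a short exact sequence of flat group schemes
\[
0 \to \mc{G}^{(\lambda)} \to \mc{W}_i \to \mc{W}_{i-1} \to 0,
\]
and the corresponding long exact sequence, combined with the vanishing $H^1(\Spec B, \mc{G}^{(\lambda)}) = 0$ invoked in the proof of Proposition \ref{Psszp}, gives $H^1(\Spec B, \mc{W}_i) = 0$ as soon as $H^1(\Spec B, \mc{W}_{i-1}) = 0$. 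The base case $i = 1$ is exactly the vanishing cited in the proof of Proposition \ref{Psszp}; the hypothesis that $B$ is a flat local $R$-algebra is what makes this vanishing available.

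To realize the resulting bijection as the Cartesian square in the statement, I would trace through the standard description of the coboundary map $\mc{V}_n(B) \to H^1(\Spec B, \ints/p^n)$: the class represented by a morphism $b \colon \Spec B \to \mc{V}_n$ is sent to the $\ints/p^n$-torsor obtained by pulling back $\psi_n \colon \mc{W}_n \to \mc{V}_n$ along $b$, i.e.\ to $\Spec B \times_{\mc{V}_n} \mc{W}_n$. This is precisely the Cartesian diagram claimed. The genuine obstacle is the inductive vanishing of $H^1(\Spec B, \mc{W}_n)$; once this is in hand, the rest is bookkeeping with the explicit layered presentation of $\mc{W}_n$ furnished by Proposition \ref{Pkaswtheory}.
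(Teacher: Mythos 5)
Your proposal is correct and follows essentially the same route as the paper: reduce everything to the vanishing of $H^1(\Spec B, \mc{W}_n)$ and then repeat the coboundary/pullback argument of Proposition \ref{Psszp} to identify $\ints/p^n$-torsors over $\Spec B$ with pullbacks of $\psi_n$. The only difference is that the paper simply cites \cite[Theorem 3.6]{SS:kasw1} for that vanishing, whereas you sketch a d\'{e}vissage proof of it using the filtration of $\mc{W}_n$ by copies of $\mc{G}^{(\lambda)}$ together with $H^1(\Spec B, \mc{G}^{(\lambda)}) = 0$, which is in the spirit of the cited result.
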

\begin{proof} One shows that $H^1(\Spec B, \mc{W}_n) = 0$ (\cite[Theorem 3.6]{SS:kasw1}), and then the proof is the same as that of Proposition
\ref{Psszp}.
\end{proof}

\subsection{Lifting $\ints/p^2$-extensions}\label{Szp2lifting}
The aim of this section is to sketch the proof of the following theorem of Green and Matignon:

\begin{theorem}[\cite{GM:lg}, Theorem 2]\label{Tzp2lifting}
The group $\ints/p^2$ is a local Oort group.  Moreover, any $\ints/p^2$-extension of $k[[t]]$ lifts over $W(k)[\zeta_{p^2}]$.
\end{theorem}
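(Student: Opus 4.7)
The plan is to combine Sekiguchi--Suwa theory for $n=2$ with the different criterion of Proposition \ref{Pdifferent}. By Proposition \ref{Ppnexplicit}, after a change of variables we may write the given $\ints/p^2$-extension $A/k[[t]]$ as generated by $y_1, y_2$ satisfying
\begin{align*}
y_1^p - y_1 &= t^{-u_1}, \\
y_2^p - y_2 &= f_1(y_1) + x_2,
\end{align*}
where $p \nmid u_1$, $x_2 \in k[t^{-1}]$ has no monomials of degree divisible by $p$, and the upper jumps are $(u_1, u_2)$ with $u_2 = \max(\deg x_2,\, p u_1)$. Set $R = W(k)[\zeta_{p^2}]$ and $\lambda = \zeta_p - 1$, and let $B = R[[T]]_{(\pi)}$, where $\pi$ is a uniformizer of $R$, as in Remark \ref{Rzpliftinginterp}. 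By Proposition \ref{Psszpn}, an unramified $\ints/p^2$-torsor over $\Spec B$ is determined by a class in $\mc{V}_2(B)/\psi_2(\mc{W}_2(B))$, represented by a pair $(X_1, X_2) \in B^2$; its reduction modulo $\pi$ is the Artin--Schreier--Witt datum whose components are obtained by expanding the specialization of $\beta^{(2)}$ in diagram \ref{Essn}. The task is therefore to exhibit $(X_1, X_2)$ whose reduction recovers $(t^{-u_1}, x_2)$ in $W_2(k((t)))$ and whose associated cover has generic branch divisor as small as possible.

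Following the $\ints/p$ case in Theorem \ref{Tzplifting}, I would take $X_1 = T^{-u_1}$, so that the first layer is the Kummer cover $Z_1^p = 1 + \lambda^p T^{-u_1}$, which reduces to $y_1^p - y_1 = t^{-u_1}$ and is branched at exactly $u_1 + 1$ points of the open unit disc ($T = 0$ and the $u_1$th roots of $-\lambda^p$). For $X_2$, I would take a carefully chosen lift to $R[T^{-1}]$ of $x_2$, then modify it by an element of $\psi_2(\mc{W}_2(B))$ (which does not change the torsor class) in order to eliminate unwanted poles and zeros in the expression $G_1(X_1) + \lambda^p X_2$ obtained from $\beta^{(2)}$. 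The goal is to produce a second-layer Kummer equation whose branch divisor on the generic fiber consists of a controlled number of points, each of index $p$ or $p^2$, chosen to match the target different below.

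Once $(X_1, X_2)$ is in hand, the different of the generic fiber can be computed directly from the branching data of the resulting $\ints/p^2$-cover. The target value, supplied by Lemma \ref{Ljumpsdifferent}, is
\[
\delta_s \;=\; p^2 - 1 + (p^2-1)u_1 + p(p-1)(u_2 - u_1).
\]
If the construction yields a generic fiber with precisely this degree of different, then Proposition \ref{Pdifferent} forces the birational lift to be an actual lift, proving the theorem. The main obstacle lies in choosing $X_2$: the polynomials $F_1, G_1$ appearing in $\mc{W}_2$ and $\mc{V}_2$ are only accessible through explicit but complicated computations (cf.\ Remark \ref{Rsscomplicated}), and it is not a priori clear that a suitable $X_2$ exists for every possible $x_2$. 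In particular, the second Witt-level contribution $f_1(y_1)$ interacts nontrivially with the choice $X_1 = T^{-u_1}$ through $G_1(X_1)$, and one must exhibit coboundary corrections absorbing the excess branching this interaction would otherwise create on the generic fiber. This delicate bookkeeping—rather than the formal framework of Sekiguchi--Suwa theory itself—is what makes Theorem \ref{Tzp2lifting} substantially harder than the $n=1$ case, and is the heart of the argument in \cite{GM:lg}.
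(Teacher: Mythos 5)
Your framework coincides with the paper's (and with Green--Matignon's): Sekiguchi--Suwa theory for $n=2$ plus the different criterion of Proposition \ref{Pdifferent}, and your target value $\delta_s = p^2-1+(p^2-1)u_1+p(p-1)(u_2-u_1)$ from Lemma \ref{Ljumpsdifferent} is correct. But the proposal stops exactly where the real proof begins. The whole content of the theorem is the \emph{existence} of a class $(X_1,X_2)$ whose reduction is the Witt vector $(t^{-u_1},x_2)$ and whose generic branch divisor is small enough that the different of the generic fiber equals $\delta_s$; you state this as ``the goal'' and concede it is not clear such an $X_2$ exists. Moreover, the specific mechanism you sketch --- lift $x_2$ to $R[T^{-1}]$ and then correct by elements of $\psi_2(\mc{W}_2(B))$ --- is essentially the na\"{i}ve approach that \S\ref{Szp2lifting} explicitly notes fails: sending $X_2$ to a lift of $x_2$ produces extra branching on the generic fiber, the different criterion is then violated in general, and no argument is given that coboundary corrections can remove the excess branching while keeping the reduction equal to $(t^{-u_1},x_2)$.

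The paper's (Green--Matignon's) argument has a two-step structure that your plan is missing. First one takes $X_1=T^{-u_1}$ and $X_2=0$, so the cover is given by $Z_1^p=1+\lambda^pT^{-u_1}$, $Z_2^p=Z_1\,G_1(T^{-u_1})$ with $G_1(X_1)=\exp_p(\mu^pX_1)$; an intricate computation shows this reduces to the extension with Witt vector $(t^{-u_1},0)$, whose upper jumps are $(u_1,pu_1)$, and here the branch locus is completely explicit ($u_1+1$ points of index $p^2$ and $(p-1)u_1$ points of index $p$), so Proposition \ref{Pdifferent} applies and one obtains an actual lift of this \emph{one} special extension. Only afterwards does one treat arbitrary $x_2$, by deforming the second Kummer equation: $H_2$ is replaced by a polynomial of degree $u_2-u_1$ in $T^{-1}$ that differs from the na\"{i}ve modification $H_2+\lambda^pQ$ (with $Q$ a lift of $x_2$, suitably normalized) by a $p$th power in $\Frac(R[[T]])$, so that the degree bound needed for the different criterion is restored (this is the mechanism recorded in Proposition \ref{Preplace} and Proposition \ref{Pbirationalactual}(i)). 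Without carrying out either the $x_2=0$ reduction computation or the deformation step --- or supplying a substitute argument for the existence of your $X_2$ --- the proposal is a correct plan with the central step, which is the theorem itself, left open.
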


Green and Matignon do this by making the diagram in Proposition \ref{Psszpn} more explicit in the case $n=2$.  
More specifically, they take $R = W(k)[\zeta_{p^2}]$ and $B = R[[T]]_{(\pi)}$, where $\pi$ is a uniformizer of $R$, 
and write down a specific morphism $\Spec B \to \mc{V}_2$.  By 
Proposition \ref{Psszpn}, this gives an unramified $\ints/p^2$-cover $\Spec C \to \Spec B$.  They then show that   
$C'/R[[T]]$, where $C'$ is the integral closure of $R[[T]]$ in $\Frac(C)$, reduces to a specific $\ints/p^2$-Galois extension $A/k[[t]]$
(in fact, it is an extension in the form of Proposition \ref{Ppnexplicit} where $x_2 = 0$).  
Thus  $C'/R[[T]]$ is a lift of $A/k[[t]]$.  Lastly, they show how, given a lift of $A/k[[t]]$, one can lift all other $\ints/p^2$-extensions of $k[[t]]$.

We start by constructing the map $\Spec C \to \Spec B$.
Let $\lambda = \zeta_p - 1$ where $\zeta_p$ is a nontrivial $p$th root of unity, let $\pi = \zeta_{p^2} - 1$, where $\zeta_{p^2}^p = 
\zeta_p$, and let $\mu = \pi - \pi^2/2 + \cdots + (-1)^p\pi^{p-1}/(p-1)$.  Note that $\mu$ and $\pi$ are both uniformizers of $R$.
Recall that $$\mc{W}_n \cong \Spec R[Y_1, Y_2, \frac{1}{1 + \lambda Y_1}, \frac{1}{F_1(Y_1) + \lambda Y_2}]$$ and
$$\mc{V}_n \cong \Spec R[X_1, X_2, \frac{1}{1 + \lambda^p X_1}, \frac{1}{G_1(X_1) + \lambda^p X_2}].$$
We have $F_1(Y_1) = \exp_p(\mu Y_1)$ and $G_1(X_1) = \exp_p(\mu^p X_1)$, where $\exp_p$ is the truncated exponential including terms up 
through degree $p-1$.  The formula for $F_1$ comes from \cite{SS:kasw1}, and Green and Matignon derive the formula for 
$G_1$, although it can also be determined from \cite{SS:kasw2}.  Moreover, by \cite[Theorem 7.1]{SS:kasw1}, there is a commutative diagram
of group schemes over $R$:
\[
\xymatrix{
\mc{W}_1 = \mc{G}^{(\lambda)} \ar[r] \ar[d]^{\psi_1} & \mc{W}_2 \ar[d]^{\psi_2} \ar[r]^{\alpha^{(2)}} & (\Gp_m)^2 \ar[d]^{\phi_2} \\
\mc{V}_1 = \mc{G}^{(\lambda^p)}  \ar[r] & \mc{V}_2 \ar[r]^{\beta^{(2)}} & (\Gp_m)^2,
}
\]
where the $\psi_i$'s are from Proposition \ref{Pkaswtheory}, the left horizontal arrows are given by $X_2 = 0$ and $Y_2 = 0$, the map $\phi_2$ is
from (\ref{Egenkummer}), the map $\alpha^{(2)}$ is given by $(Y_1, Y_2) \mapsto (1 + \lambda Y_1, F_1(Y_1) + \lambda Y_2)$, and
$\beta^{(2)}$ is given by $(X_1, X_2) \mapsto (1 + \lambda^p X_1, G_1(X_1) + \lambda^p X_2)$ (the right hand square is nothing but (\ref{Essn}), 
for $n=2$).  From this diagram, we see that the map
$\psi_2: \mc{W}_2 \to \mc{V}_2$ is given by the equations
\begin{eqnarray*}
(1 + \lambda Y_1)^p &=& 1 + \lambda^p X_1\\
(F_1(Y_1) + \lambda Y_2)^p(1 + \lambda Y_1)^{-1} &=& G_1(\psi_1(Y_1)) + \lambda^p X_2.
\end{eqnarray*} 
Note that $\psi_1$ is the same as $\psi$ from (\ref{Epsi}), that is, $\psi_1(Y_1) = \frac{(1 + \lambda Y_1)^p - 1}{\lambda^p}$.  
Let $\alpha: \Spec B \to \mc{V}_2$ be given by $X_1 \to T^{-u_1}$ and $X_2 \to 0$.  Then, in Proposition \ref{Psszpn}, we have
\begin{equation}\label{Ep2extension}
C \cong B[Y_1, Y_2]/(\psi_1(Y_1) - T^{-u_1}, (F_1(Y_1) + \lambda Y_2)^p - (1 + \lambda Y_1)G_1(T^{-u_1})),
\end{equation}
and $\Spec C \to \Spec B$ is an unramified cover, which must reduce to some $\ints/p^2$-cover of $k((t))$ if $t$ is the reduction of $T$.

An intricate calculation (\cite[Lemmas 5.2, 5.3]{GM:lg}) shows that (\ref{Ep2extension}) in fact reduces to the
the $\ints/p^2$-extension $A/k((t))$ in Proposition \ref{Ppnexplicit} with $x_1 = t^{-u_1}$ and $x_2 = 0$, which has upper jumps $(u_1, pu_1)$.  
By Lemma \ref{Ljumpsdifferent}, the 
degree of the different of this extension is $$(p^2 - 1)(u_1 + 1) + p(p-1)^2 u_1.$$  

Now, if we let $Z_1 = 1 + \lambda Y_1$ and $Z_2 = F_1(Y_1) + \lambda Y_2$, then we can write the equations for $C$ as 
\begin{equation}\label{Ep2general}
Z_1^p = 1 + \lambda^p T^{-u_1}, \ Z_2^p = Z_1 \cdot G_1(T^{-u_1}).
\end{equation}
Letting $C'$ be the normalization of $R[[T]]$ in $\Frac(C)$, we see that $\Spec (C' \otimes_R K) \to \Spec (R[[T]] \otimes_R K)$ is branched of 
order $p^2$ at $u_1 + 1$ points (the zeroes and poles of $1 + \lambda^p T^{-u_1}$ in the open unit disc around $T=0$) and of order $p$ at 
$(p-1)u_1$ points (the zeroes of $G_1(T^{-u_1})$, or $\exp_p(\mu^p T^{-u_1})$, in the open unit disc around $T=0$).  
Since $\cf(K) = 0$, the degree of the different of this extension is also $$(p^2-1)(u_1 + 1) + p(p-1)^2 u_1.$$  By the different 
criterion (Proposition \ref{Pdifferent}), $C'/R[[T]]$ is a lift of $A'/k[[t]]$, where $A'$ is the integral closure of $k[[t]]$ in $A$.

Green and Matignon then show (\cite[Lemma 5.4, Theorem 5.5]{GM:lg}) that $C$ can be deformed so as to find a lift of the integral closure of
$k[[t]]$ in the $\ints/p^2$-extension $A/k[[t]]$, corresponding to $x_1 = t^{-u_1}$ and $x_2$ \emph{arbitrary} in Proposition \ref{Ppnexplicit}.
This is a subtle calculation, and we remark that the na\"{i}ve approach (i.e., choosing $\alpha: \Spec B \to \mc{V}_2$ to send $X_2$ to a lift of $x_2$ 
instead of to $0$) fails because the different criterion is no longer satisfied in general.  We will discuss this further in \S\ref{Szpnlifting}.

\subsection{A different approach to lifting $\ints/p^n$-extensions}\label{Szpnlifting}
\subsubsection{The general form.}\label{Sgenform}
Since the explicit polynomials $F_n$ and $G_n$ involved in Sekiguchi-Suwa theory (Proposition \ref{Pkaswtheory}) get very complicated when $n$ gets
large, it is difficult to generalize the methods of Green and Matignon to show that $\ints/p^n$ is a local Oort group.  In (as of yet unpublished) work of
Stefan Wewers and the author, a different, less explicit approach is taken.  The starting point is equation (\ref{Ep2general}).  Generalizing this,
it is clear that if $R$ contains the $p^n$th roots of unity, then any $\ints/p^n$-extension of $\Frac(R[[T]])$ can be given by equations
\begin{equation}\label{Egenform}
\begin{split}
Z_1^p &= H_1(T) \\
Z_2^p &= Z_1 H_2(T) \\
&\vdots \\
Z_n^p &= Z_{n-1} H_n(T), 
\end{split}
\end{equation}
where $H_i(T) \in \Frac(R[[T]])$ for $1 \leq i \leq n$.  The Galois action of a generator of $\ints/p^n$ sends $Z_i$ to $\zeta_{p^i}Z_i$, where
$\zeta_{p^i}$ is a $p^i$th root of unity and $\zeta_{p^j}^{p^{j-i}} = \zeta_{p^i}$ for $j \geq i$.
This is isomorphic to the extension $Z^{p^n} = H_1(T)H_2(T)^p \cdots H_n(T)^{p^{n-1}}$ in standard Kummer form.

The next proposition gives a somewhat explicit criterion for when a birational lift of a cyclic extension is an actual lift.

\begin{prop}\label{Pbirationalactual}
\begin{enumerate}[(i)]
\item
Let $A/k[[t]]$ be a $\ints/p^n$-extension with upper jumps $(u_1, \ldots, u_n)$.  Suppose $A_R/R[[T]]$ is a birational lift of $A/k[[t]]$ given
by normalizing $R[[T]]$ in the extension of fraction fields given by 
(\ref{Egenform}).  If the $H_i$ are polynomials in $T^{-1}$, if $\deg H_1 = u_1$, and if $\deg H_i = u_i - u_{i-1}$ for $i > 1$, then 
$A_R/R[[T]]$ is a lift of $A/k[[t]]$.
\item Suppose that $A_R/R[[T]]$ is the normalization of $R[[T]]$ in the extension of fraction fields generated by (\ref{Egenform}), 
where the $H_i$ are polynomials in $T^{-1}$ with $\deg H_1 = u_1$, $\deg H_i = u_i - u_{i-1}$ for $2 \leq i \leq n-1$, and 
$\deg H_n = (p-1)u_{n-1}$.  
Suppose further that the $\ints/p^{n-1}$-subextension $B_R/R[[T]]$ of $A_R/R[[T]]$ reduces to a $\ints/p^{n-1}$-extension $B/k[[t]]$ with upper jumps 
$(u_1, \ldots, u_{n-1})$. 
If the reduction $A/k[[t]]$ of $A_R/R[[T]]$ to characteristic $p$ gives a separable extension of fraction fields, then $A$ is integrally closed and the 
upper jumps of $A/k[[t]]$ are $(u_1, \ldots, u_{n-1}, pu_{n-1})$.  Thus $A_R/R[[T]]$ is a lift of \emph{some} $\ints/p^n$-extension $A/k[[t]]$.
\end{enumerate}
\end{prop}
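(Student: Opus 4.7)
The plan is to apply the different criterion (Proposition~\ref{Pdifferent}): since it automatically yields $\delta_s \leq \delta_\eta$ for any birational lift, to conclude that $A_R/R[[T]]$ is an actual lift I need only prove the matching upper bound $\delta_\eta \leq \delta_s$, where $\delta_s$ is computed from the upper jumps via Lemma~\ref{Ljumpsdifferent}.

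For part~(i), I would first combine the tower~(\ref{Egenform}) into the single Kummer equation $Z^{p^n} = f$ with $f = H_1 H_2^p H_3^{p^2} \cdots H_n^{p^{n-1}}$. In characteristic zero this gives a tame cover of the open unit disc, so the contribution to $\delta_\eta$ at a closed point $T = a$ with $v(T) > 0$ is $p^n - \gcd(p^n, v_a(f))$. At $T = 0$, we have $v_0(f) = -\sum_{j=1}^n p^{j-1}(u_j - u_{j-1}) \equiv -u_1 \pmod{p}$, which is a unit mod $p$, giving the maximal contribution $p^n - 1$. At every other branch point, the contribution $p^n - p^{j-1}$ is realized only when exactly one $H_j$ has a simple zero at $a$, and one checks that coincidences of zeros or higher multiplicities strictly decrease the local contribution. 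Summing yields
$$\delta_\eta \leq (p^n - 1) + \sum_{j=1}^n (p^n - p^{j-1}) \deg H_j,$$
and substituting the degree hypotheses together with the identity $p^n - p^{j-1} = p^{j-1}(p^{n-j+1} - 1)$ shows the right-hand side matches the formula for $\delta_s$ from Lemma~\ref{Ljumpsdifferent}. Equality of differents then forces the lift.

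For part~(ii), I would first apply part~(i) to the subextension $B_R/R[[T]]$: its $H_1, \ldots, H_{n-1}$ satisfy the required degree conditions and its reduction has upper jumps $(u_1, \ldots, u_{n-1})$, so $B_R/R[[T]]$ is an actual lift and $\delta_s(B) = \delta_\eta(B)$ is known. Separability of the reduction $A/k[[t]]$ means it is a $\ints/p^n$-extension of $k((t))$ with upper jumps $(u_1, \ldots, u_{n-1}, u_n')$, and by Corollary~\ref{Ccomparejumps} one has $u_n' \geq p u_{n-1}$. Running the same generic-different bound on $A_R/R[[T]]$, the new contribution introduced by $H_n$ is at most $(p^n - p^{n-1}) \deg H_n = (p^n - p^{n-1})(p-1) u_{n-1}$, while $\delta_s(A) - \delta_s(B) = (p^n - p^{n-1})(u_n' - u_{n-1})$. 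The inequality $\delta_s(A) \leq \delta_\eta(A)$ of Proposition~\ref{Pdifferent} then forces $u_n' \leq p u_{n-1}$, and combined with Corollary~\ref{Ccomparejumps} we obtain $u_n' = p u_{n-1}$; equality of differents again yields a lift.

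The main obstacle I anticipate is the local analysis at branch points $T = a \ne 0$ needed to justify the generic-different upper bound. Concretely, one must verify the elementary but delicate $p$-adic inequality that for any multiplicity vector $(m_1, \ldots, m_n)$ with $m_j = v_a(H_j)$, the single-point contribution $p^n - \gcd(p^n, \sum_j p^{j-1} m_j)$ is bounded above by the ``fully separated'' total $\sum_j m_j(p^n - p^{j-1})$. The small-$p$ cases (especially $p = 2$) require care because coincidences behave differently modulo $p$ there, and one should also confirm that zeros of the $H_j$ lying outside the open unit disc simply contribute nothing to $\delta_\eta$, so their presence only tightens the bound rather than invalidates the matching with $\delta_s$.
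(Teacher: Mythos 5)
Your overall strategy is exactly the paper's: bound the generic different $\delta_\eta$ by the ``fully separated'' value, identify that value with $\delta_s$ via Lemma~\ref{Ljumpsdifferent}, and let Proposition~\ref{Pdifferent} (plus, in (ii), Corollary~\ref{Ccomparejumps} and the normalization inequality $\delta_s' \le \delta_s$) do the rest. For part (i) your variation through the single Kummer equation $Z^{p^n}=f$ is fine, and the ``delicate'' inequality you flag is in fact easy: if exactly one $H_j$ has a simple zero at $a$ the contribution is exactly $p^n-p^{j-1}$, while if $\sum_j m_j \ge 2$ the contribution is at most $p^n-1 \le 2(p^n-p^{n-1}) \le \sum_j m_j(p^n-p^{j-1})$, so coincidences and multiplicities only help. (The paper avoids even this by noting that if $i$ is minimal with $H_i(a^{-1})=0$, the degree-$p^{i-1}$ subcover is \'etale over $a$, so the ramification index there divides $p^{n-i+1}$.) Also note that the exact computation of $v_0(f)$ mod $p$ is unnecessary: only the upper bound $p^n-1$ at $T=0$ is needed, since the matching lower bound on $\delta_\eta$ comes for free from Proposition~\ref{Pdifferent}.

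In part (ii), however, both displayed identities are wrong as stated, because passing from the degree-$p^{n-1}$ cover to the degree-$p^n$ cover rescales the contributions of \emph{all} pre-existing branch points, not just adds the zeros of $H_n$: the point $T=0$ and the zeros of $H_1,\dots,H_{n-1}$ contribute up to $p^n-1+\sum_{i=1}^{n-1}p^{i-1}(p^{n-i+1}-1)(u_i-u_{i-1})$ to $\delta_\eta(A)$, which is strictly larger than $\delta_s(B)=p^{n-1}-1+\sum_{i=1}^{n-1}p^{i-1}(p^{n-i}-1)(u_i-u_{i-1})$; likewise $\delta_s(A')-\delta_s(B)=(p^n-p^{n-1})(1+u_n')$, not $(p^n-p^{n-1})(u_n'-u_{n-1})$. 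The two underestimates happen to differ by the same amount $(p^n-p^{n-1})(1+u_{n-1})$, so your final inequality $u_n'\le pu_{n-1}$ comes out correct only by cancellation of errors. The clean repair is the paper's: bound $\delta_\eta(A)$ directly by $(p^n-1)(u_1+1)+\sum_{i=2}^{n-1}(p^{n-i+1}-1)p^{i-1}(u_i-u_{i-1})+(p-1)^2p^{n-1}u_{n-1}$, compute $\delta_s'$ for the normalization $A'$ from Lemma~\ref{Ljumpsdifferent} with last jump $u_n'\ge pu_{n-1}$ (Corollary~\ref{Ccomparejumps}), and compare: $\delta_s'\le\delta_s=\delta_\eta\le$ the bound forces $u_n'=pu_{n-1}$ and $\delta_s'=\delta_s$, whence $A=A'$ is integrally closed and $A_R/R[[T]]$ is a lift. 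With that bookkeeping corrected, your argument coincides with the paper's.
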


\begin{proof}
\emph{To (i):}
If $T=x$ is in the open
unit disc $\mc{D}$, and $i$ is minimal such that $T=x$ is a zero of $H_i(T^{-1})$, then the branching index of $T = x$ in 
$\Spec (A_R \otimes_R K) \to \Spec (R[[T]] \otimes_R K)$ is at most $p^{n-i+1}$.  
Clearly $T = 0$ is the only pole of any $H_i(T^{-1})$, and has branching index at most $p^n$.  
Thus, the zeroes of $H_1(T^{-1})$ in $\mc{D}$, as well as $T=0$, are branched of index at most $p^n$.
The zeroes of $H_2(T^{-1})$ that are not zeroes of $H_1(T^{-1})$ are branched of index at most $p^{n-1}$, etc.  Since $H_i$ has at most $\deg H_i$ 
zeroes, it follows that the degree
$\delta_{\eta}$ of the different of $(A_R \otimes_R K)/(R[[T]] \otimes_R K)$ satisfies

\begin{equation}\label{Egenericdiff1}
\delta_{\eta} \leq (p^n - 1)(u_1 + 1) + \sum_{i=2}^n (p^{n-i+1} - 1)p^{i-1}(u_i - u_{i-1}).
\end{equation}
By Lemma \ref{Ljumpsdifferent}, the right-hand side is the degree $\delta_s$ of the different of $A/k[[t]]$.
By the different criterion (Proposition \ref{Pdifferent}), we have $\delta_{\eta} = \delta_s$, and our birational lift is an actual lift.
\\
\\
\emph{To (ii):} As in part (i), the degree $\delta_{\eta}$ of the different of $(A_R \otimes_R K)/(R[[T]] \otimes_R K)$ satisfies
\begin{equation}\label{Egenericdiff2}
\delta_{\eta} \leq (p^n - 1)(u_1 + 1) + \left(\sum_{i=2}^{n-1} (p^{n-i+1} - 1)p^{i-1}(u_i - u_{i-1})\right) + (p-1)p^{n-1}((p-1)u_{n-1}).
\end{equation}
Let $(u_1, \ldots, u_n)$ be the upper jumps of $A'/k[[t]]$, 
where $A'$ is the integral closure of $A$.  Then the degree of the different of
$A'/k[[t]]$ is $$\delta_s' := (p^n - 1)(u_1 + 1) + \sum_{i=2}^n (p^{n-i+1} - 1)p^{i-1}(u_i - u_{i-1}),$$ by Lemma \ref{Ljumpsdifferent}.  
If $\delta_s$ is the degree of
the different of $A/k[[t]]$, then $\delta_s' \leq \delta_s = \delta_{\eta}$.  But, by Corollary \ref{Ccomparejumps}, we have $\delta_s' \geq \delta_{\eta}$,
with equality iff $u_n = pu_{n-1}$.  So $\delta_s = \delta_s'$, which means that $A = A'$ and the $u_i$ are as desired.
\end{proof}

\begin{remark}\label{Requality}
Since $\delta_{\eta} = \delta_s$ in both cases above, we have equalities in (\ref{Egenericdiff1}) and (\ref{Egenericdiff2}), 
so each of the zeroes of each $H_i(T^{-1})$ must be simple and must lie in the open unit 
disc around $T=0$.  This means that, up to scaling by a constant, each polynomial $H_i(T^{-1})$ is of the form
$a_0 + a_1T^{-1} + \cdots + a_{N_i}T^{-{N_i}}$ with $v(a_0) = 0$ and $v(a_j) > 0$ for $j > 0$.  Furthermore, the zeroes of all of the
$H_i(T^{-1})$ must be pairwise distinct. 
\end{remark}

Note that Propostion \ref{Pbirationalactual}(ii) applies to Green and Matignon's lift in (\ref{Ep2general}), as $G_1(\psi_1(Y_1)) = 
\exp_p(\mu^pT^{-u_1})$.  In fact, Proposition \ref{Pbirationalactual}(i)
applies to Green and Matignon's lifts for general $\ints/p^2$-extensions (this is how they prove that the birational formulas they write down are actually 
lifts).  So it seems reasonable, given an $\ints/p^n$-extension, to look for a lift in the form of (\ref{Egenform}) satisfying the conditions of Proposition 
\ref{Pbirationalactual}(i).

Furthermore, Proposition \ref{Pbirationalactual} suggests a framework to attack the lifting problem for cyclic extensions:  

\begin{framework}\label{Fproof}
Suppose 
$A/k[[t]]$ is a $\ints/p^n$-extension with upper jumps $(u_1, \ldots, u_n)$.  Let $B/k[[t]]$ be the $\ints/p^{n-1}$-subextension.  Assume by induction
that we have a lift $B_R/R[[T]]$ of $B/k[[t]]$ given by equations in the form of (\ref{Egenform}), where the $H_i$ are polynomials in $T^{-1}$ 
with $\deg H_1 = u_1$, and $\deg H_i = u_i - u_{i-1}$ for $2 \leq i \leq n-1$.   

\emph{Step 1:} We seek a polynomial $H_n$ in $T^{-1}$ of degree $(p-1)u_{n-1}$ so that the reduction of (\ref{Egenform}) is separable.
By Proposition \ref{Pbirationalactual}(ii), the equations (\ref{Egenform}) then lift \emph{some} 
$\ints/p^n$-extension with upper jumps $(u_1, \ldots, u_{n-1}, pu_{n-1})$ whose $\ints/p^{n-1}$-subextension is $B/k[[t]]$.  

\emph{Step 2:} If
we can find such an $H_n$, then the goal is to replace $H_n$ by a polynomial of degree $u_n - u_{n-1}$ in order to give a birational
lift of $A/k[[t]]$, using an argument along the lines of that of Green and Matignon.  This will be an actual lift by Proposition \ref{Pbirationalactual}.
\end{framework}

\subsubsection{Depth and separability.}\label{Sdepthandsep}
Framework \ref{Fproof} forces us to understand when an extension of $R[[T]]$ has separable reduction.  In Appendix \ref{Sstablegraph}, the notion
of the \emph{depth} of a cyclic $\ints/p^n$-extension $A_R/R[[T]]$ is discussed.  This is a non-negative number that is $0$ iff the
extension has separable reduction.  

Fix a $\ints/p^n$-extension $A_R/R[[T]],$ corresponding to a morphism $f_R: \Spec A_R \to \Spec R[[T]]$, birationally given by equations in the form 
(\ref{Egenform}).  For each $i$, $1 \leq i \leq n$, let $(A_R)_i/R[[T]]$ be the unique $\ints/p^i$-subextension, and let $(f_R)_i$ be the corresponding
morphism.  
For each rational $r \geq 0$, let $a_r$ be an element of a finite extension of $R$ of valuation $r$, and enlarge $R$ so that it contains $a_r$.  
Then $\mc{D}_r := \Spec R[[a_r^{-1}T]]$ is the open disc of
radius $|a_r|$ (\S\ref{Sdiscs}).  Note that $\mc{D} = \mc{D}_0$.  The restriction of any $(f_R)_i$ to the disc $\mc{D}_r$ corresponds to making a 
substitution $U = a_r^{-1}T$ in the $H_1(T^{-1}), \ldots, H_i(T^{-1})$, and viewing the equations (\ref{Egenform}) as giving an extension of 
$R[[U]]$.  We denote the depth
of such an extension by $\delta_i(r)$, and we view $\delta_i$ as a function of $r$.  Our goal is to find $H_1, \ldots, H_n$ such that
$\delta_n(0) = 0$.

The following result is unpublished, but is known to the experts.  It can essentially be derived from \cite[\S5.3]{Ob:fm1}, where the depth 
is called the ``effective different."
\begin{lemma}\label{Ldepthslope}
Let $(f_R)_i$ and $a_r$ be as above.
For each $1 \leq i \leq n$, the depth $\delta_i$ is a piecewise-linear function of $r$.  The right-derivative of $\delta_i$ at $r$ is less than or
equal to $\nu_i(r) - 1$, where $\nu_i(r)$ is the number of branch points of the generic fiber of $(f_R)_i$ with valuation greater than $r$ (in 
terms of the coordinate $T$).  
Equality holds iff the special fiber of $(f_R)_i\left|_{\mc{D}_r} \right.$ is smooth (i.e., the integral closure of $R[[a_r^{-1}T]]$ in 
$\Frac((A_R)_i)$ has integrally closed reduction).
\end{lemma}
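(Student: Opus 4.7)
The plan is to express the depth $\delta_i(r)$ as the discrepancy, up to a fixed positive normalization, between the degree $\delta_\eta(r)$ of the different of $(f_R)_i|_{\mc{D}_r}$ on its generic fiber and the degree $\tilde\delta_s(r)$ of the different of the normalization of its special fiber. This is essentially the definition of the effective different given in Appendix~\ref{Sstablegraph} (cf.\ \cite[\S5.3]{Ob:fm1}), and by the different criterion (Proposition~\ref{Pdifferent}) we always have $\delta_\eta(r) \ge \tilde\delta_s(r)$, with equality iff the special fiber of $(f_R)_i|_{\mc{D}_r}$ is smooth. Once this identification is in place, the lemma reduces to tracking how each of these two functions varies with $r$.

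The first step is the generic side. Since the generic fiber of $(f_R)_i|_{\mc{D}_r}$ lives in characteristic zero, its ramification is tame, and hence $\delta_\eta(r) = \sum_j (e_j - 1)$, summed over those branch points of the generic fiber of $(f_R)_i$ that lie in $\mc{D}_r$, i.e.\ those with $v(T) \ge r$. This is a right-continuous step function of $r$ that drops by a fixed positive amount exactly when $r$ crosses the valuation of a branch point and is otherwise locally constant. In particular its right-derivative is identically zero.

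The second step is the special side, and this is where the geometry enters. The normalization of the special fiber of $(f_R)_i|_{\mc{D}_r}$ is a $\ints/p^i$-extension of $k[[u]]$ totally ramified at $u=0$, whose higher ramification filtration encodes the ``accumulation at the boundary'' of those branch points that have been pushed out as $r$ grew. Applying formula~(\ref{Ebasicdifferent}) to this filtration and then analyzing how it changes via the deformation-data formalism of Appendix~\ref{Sstablegraph}, one shows that $\tilde\delta_s(r)$ is piecewise linear, with right-slope at $r$ bounded below by $-(\nu_i(r)-1)$ times the normalization constant. Equality holds exactly when no ``wild ramification'' of the restricted cover leaks into the interior of $\mc{D}_r$, which is precisely the condition that the special fiber be smooth. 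Combined with the vanishing right-derivative of $\delta_\eta$, this yields the bound $\delta_i'(r^+) \le \nu_i(r) - 1$ together with the stated equality criterion; piecewise linearity of $\delta_i$ follows from the piecewise linearity of both $\delta_\eta$ and $\tilde\delta_s$ and the fact that $\nu_i$ is piecewise constant with only finitely many jumps.

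The main obstacle is the slope computation in the second step: one must quantify precisely how the ramification filtration at the boundary point of $\mc{D}_r$ responds to the loss of each branch point as $r$ varies, and pin down the constant $-1$ offset in the extremal slope (which reflects the contribution of the trivial ramification group $G_0$ in~(\ref{Ebasicdifferent})). This is carried out by the Hurwitz-tree machinery of Appendix~\ref{Sstablegraph}; once it is in hand, the rest of the argument is formal. It is worth noting that this is the same computation that underlies the ``effective different'' calculus in \cite[\S5.3]{Ob:fm1}, so the proof is essentially a repackaging of that analysis into the form needed here.
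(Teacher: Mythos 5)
The paper gives no proof of this lemma---it is stated as ``known to the experts'' and attributed to \cite[\S5.3]{Ob:fm1}---so your argument has to stand on its own, and its foundational step is false. You begin by asserting that the depth $\delta_i(r)$ is, up to a fixed positive constant, the discrepancy $\delta_\eta(r)-\tilde\delta_s(r)$ between the degree of the different on the generic fiber and that of the normalized special fiber, calling this ``essentially the definition of the effective different.'' It is not: by Proposition \ref{Pmupreduction} and Definition \ref{Ddepth}, the depth is a \emph{vertical} quantity, a multiple of the valuation of the different of the extension of discrete valuation rings at the generic points of the special fibers; it measures how far the reduction is from being separable. The horizontal quantities $\delta_\eta(r)$ and $\tilde\delta_s(r)$ are integer-valued step functions of $r$ (and $\tilde\delta_s(r)$ is not even finite when the reduction is inseparable, since the trace form vanishes---which is exactly the regime where the depth is positive), whereas the depth is rational-valued and varies continuously. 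Concretely, restrict the lift $Z^p=1+\lambda^pT^{-u_1}$ of $y^p-y=t^{-u_1}$ from Theorem \ref{Tzplifting} to $\mc{D}_r$: for $0<r<\frac{p}{(p-1)u_1}$ one computes from Proposition \ref{Pmupreduction} that $\delta_1(r)=ru_1$ (consistent with the value $\delta_1(\tfrac{1}{u_1(p-1)})=\tfrac{1}{p-1}$ invoked in \S\ref{Sdepthandsep}), while $\delta_\eta(r)=(u_1+1)(p-1)$ stays constant and the reduction is purely inseparable. No fixed normalization of $\delta_\eta-\tilde\delta_s$ can reproduce this, so your first step collapses. Note also that the equality criterion in the lemma is normality (integral closedness) of the reduction, not separability: in this very example the right-derivative equals $\nu_1(r)-1=u_1$ even though the reduction is inseparable, so reading equality as ``no wild ramification leaking into $\mc{D}_r$'' conflates two different conditions.

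Even leaving that aside, the step where the actual content should appear---your ``second step''---defers to ``the deformation-data formalism of Appendix \ref{Sstablegraph}'' and ``Hurwitz-tree machinery.'' Appendix \ref{Sstablegraph} contains no statement about how depths or differents vary with the radius $r$; the piecewise linearity of $\delta_i$, the identification of its one-sided slopes with boundary (Swan-conductor-type) invariants, and the vanishing-cycles-style bound by $\nu_i(r)-1$ with equality precisely when the integral closure of $R[[a_r^{-1}T]]$ in $\Frac((A_R)_i)$ has integrally closed reduction, are exactly what must be proven---this is the analysis carried out in \cite[\S5.3]{Ob:fm1}. As written, your argument assumes the lemma's content at that point, so the proposal has a genuine gap both in its starting identity and in its key estimate.
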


Let us now place ourselves in Framework \ref{Fproof} and use the notation therein.  We will make the further assumption (for the remainder of
\S\ref{Sdepthandsep}) that the zeroes of
$H_1, \ldots, H_{n-1}$ (and thus the branch points of the generic fiber of $(f_R)_{n-1}$) all have valuation greater than $\frac{1}{u_{n-1}(p-1)}$.  
This assumption is essential (in fact, its necessity is the reason that we do not have a full proof that $\ints/p^n$ is a local Oort group), as it has the 
consequence that $\delta_{n-1}(\frac{1}{u_{n-1}(p-1)}) = \frac{1}{p-1}$.  This follows from Lemma
\ref{Ldepthslope}.  Then, \cite[Proposition 6.1(iii)]{We:fr} shows that, no matter what we pick for $H_n$, we will have 
$\delta_{n}(\frac{1}{u_{n-1}(p-1)}) = \frac{p}{p-1}$ (here $\delta_{n-1}$ and $\delta_n$ play the roles of $\ol{\delta}$ and $\delta$ in \cite{We:fr}).  

\begin{lemma}\label{Lsepsmooth}
If we pick $H_n$ of degree $(p-1)u_{n-1}$ in $T^{-1}$ (as in Framework \ref{Fproof})
so that all of its zeroes are simple and have valuation $\frac{1}{u_{n-1}(p-1)}$, then $\delta_n(0) = 0$ iff the special fiber of $f_R$ is smooth. 
\end{lemma}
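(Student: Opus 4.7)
The plan is to apply Lemma \ref{Ldepthslope} on the interval $[0, r_0]$, where $r_0 := \frac{1}{u_{n-1}(p-1)}$, using as anchor the value $\delta_n(r_0) = \frac{p}{p-1}$ that was recorded just before the lemma. The first step is to compute $\nu_n(r)$ for $r \in [0, r_0)$. The branch points of the generic fiber of $f_R = (f_R)_n$ inside the open unit disc are $T = 0$ (coming from the pole of $H_1(T^{-1})$) together with the zeros of $H_1, \ldots, H_n$. By the standing assumption of \S\ref{Sdepthandsep} the zeros of $H_1, \ldots, H_{n-1}$ have valuation strictly greater than $r_0$, and by the hypothesis of the lemma the $(p-1)u_{n-1}$ zeros of $H_n$ are simple and have valuation exactly $r_0$. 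Hence every such branch point has valuation strictly greater than $r$ whenever $r < r_0$, so
\[ \nu_n(r) \;=\; 1 + \sum_{i=1}^{n-1}\deg H_i + \deg H_n \;=\; 1 + u_{n-1} + (p-1)u_{n-1} \;=\; 1 + pu_{n-1}. \]

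Lemma \ref{Ldepthslope} then bounds the right-derivative of the piecewise-linear function $\delta_n$ on $[0, r_0)$ by $\nu_n(r) - 1 = pu_{n-1}$, and integrating this over the interval gives
\[ \delta_n(r_0) - \delta_n(0) \;\leq\; pu_{n-1}\cdot r_0 \;=\; \frac{p}{p-1}. \]
Combining with the anchor value $\delta_n(r_0) = \frac{p}{p-1}$ produces $\delta_n(0) \geq 0$, with equality if and only if the slope bound is saturated at every $r \in [0, r_0)$.

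By the equality clause of Lemma \ref{Ldepthslope}, saturation at a given $r$ is equivalent to the special fiber of $f_R|_{\mc{D}_r}$ being smooth. So the iff in the statement reduces to checking that smoothness of the special fiber of $f_R$ itself (i.e., at $r=0$) is equivalent to smoothness at every $r \in [0, r_0)$. One direction is trivial since $r = 0$ lies in the interval; for the reverse, I would argue that restricting from $\mc{D}_0$ to a sub-disc $\mc{D}_r$ with $r < r_0$ preserves every branch point (each one has valuation at least $r_0 > r$), so the normalization over $R[[a_r^{-1}T]]$ is obtained from the normalization over $R[[T]]$ by a coordinate change $T = a_rU$, and integrally closed reduction of the latter implies the same for the former. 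I expect this smoothness-transfer to be the only nonformal point in the proof, but it is a standard descent statement in the formal/rigid framework of Appendix \ref{Sgeometry}; the real content of the lemma is the slope calculation that makes $\delta_n(0) = 0$ attainable rather than strictly positive.
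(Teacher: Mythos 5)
Your main line is the paper's own argument, made explicit: all $pu_{n-1}+1$ branch points of the generic fiber of $f_R$ (Remark \ref{Requality}) have valuation $> r$ for $r < r_0 := \frac{1}{u_{n-1}(p-1)}$, so $\nu_n(r) = pu_{n-1}+1$ there; integrating the slope bound of Lemma \ref{Ldepthslope} against the anchor $\delta_n(r_0) = \frac{p}{p-1}$ gives $\delta_n(0) \geq 0$, with $\delta_n(0)=0$ exactly when the bound is saturated, i.e.\ when the special fiber of $f_R|_{\mc{D}_r}$ is smooth for \emph{every} $r \in [0, r_0)$. In particular the direction ``$\delta_n(0)=0 \Rightarrow$ special fiber of $f_R$ smooth'' is complete in your write-up.

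The gap is in your converse, and your proposed fix does not work. You claim that because all branch points lie in $\mc{D}_r$, the normalization over $R[[a_r^{-1}T]]$ is ``obtained from the normalization over $R[[T]]$ by a coordinate change $T = a_rU$.'' The substitution $U = a_r^{-1}T$ identifies $\mc{D}_r$ with a standard open unit disc, but it does not identify the restricted cover with the original one: the normalization of $R[[a_r^{-1}T]]$ in $\Frac(A_R)$ has a genuinely different reduction than that of $R[[T]]$. If it were a mere coordinate change, the depth and the reduction type would be independent of $r$ on $[0,r_0)$ — yet in the very case you are proving, $\delta_n$ must climb from $0$ to $\frac{p}{p-1}$ on that interval while every branch point is ``preserved'' throughout. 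Nor is this a ``standard descent statement'': integrally closed reduction does not by itself imply separable reduction (e.g.\ the normalization of $R[[T]]$ in $Z^p = T + \pi^a$ is $R[[Z]]$, whose reduction $k[[z]]/k[[t]]$ with $t = z^p$ is integrally closed but purely inseparable), so smoothness of the special fiber of $f_R$ at $r=0$ alone gives nothing without the quantitative saturation over the whole interval. What you actually need is the implication ``special fiber of $f_R$ smooth $\Rightarrow$ special fiber of $f_R|_{\mc{D}_r}$ smooth for all $0 \le r < r_0$'' (upward-closedness in $r$ of the smoothness locus, which is also implicitly invoked in the proof of Proposition \ref{Psmoothing}); this requires an actual argument — for instance analyzing the preimage of the sub-disc inside the formal curve $\Spf A_R$, which is an open disc when the special fiber is smooth — and it is precisely this converse direction that is used downstream in Corollary \ref{Csmoothing}, so the gap is not cosmetic.
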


\begin{proof} 
By Remark \ref{Requality}, we see that $f_R$ has a total of $pu_{n-1} + 1$ branch points on the 
generic fiber (corresponding to the zeroes of $H_1(T^{-1}), \ldots, H_n(T^{-1})$, which are all simple and distinct, and the unique shared pole $T=0$).
Since $\delta_n(\frac{1}{u_{n-1}(p-1)}) = \frac{p}{p-1}$, Lemma \ref{Ldepthslope} shows that $\delta_n(0) = 0$ iff the special fiber of 
$f_R$ is smooth.  
\end{proof}

The next proposition is a major result in \cite{OW:oc}.

\begin{prop}\label{Psmoothing}
Given $H_n$ as in Lemma \ref{Lsepsmooth}, we can adjust $H_n$, while keeping its degree and the valuation of its zeroes the same, so that
the special fiber of $f_R$ becomes smooth.
\end{prop}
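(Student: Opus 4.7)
First I would reformulate the smoothness condition in terms of the depth function. By Lemma \ref{Ldepthslope}, the right-derivative of $\delta_n$ at any $r \in [0, \tfrac{1}{u_{n-1}(p-1)}]$ is bounded above by $\nu_n(r) - 1$. From Remark \ref{Requality} and the choice of $H_n$, there are exactly $p u_{n-1} + 1$ branch points of $(f_R)$ in the open unit disc, all with strictly positive valuation, so $\nu_n(r) - 1 \leq p u_{n-1}$ on this interval. Since $\delta_n(\tfrac{1}{u_{n-1}(p-1)}) = \tfrac{p}{p-1}$, demanding $\delta_n(0) = 0$ forces the average slope of $\delta_n$ to equal $pu_{n-1}$ and hence to saturate the upper bound \emph{at every} $r$ in this interval. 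By the equality clause of Lemma \ref{Ldepthslope}, this is equivalent to saying that the special fiber of $(f_R)|_{\mc{D}_r}$ is smooth for every $r$ in this range, which by Lemma \ref{Lsepsmooth} is what we need.

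Next I would translate this smoothness into a statement about deformation data on the closed annulus between radii $0$ and $\tfrac{1}{u_{n-1}(p-1)}$ (see Appendix \ref{Sstablegraph}). By induction, $B_R/R[[T]]$ has smooth reduction, so its deformation datum on this annulus is already determined; the $\ints/p$-step $Z_n^p = Z_{n-1} H_n(T)$ then contributes an additional deformation datum controlled by $H_n$. On the rim circle $r = \tfrac{1}{u_{n-1}(p-1)}$, a suitable normalization identifies this new datum with a meromorphic differential form $\omega$ on $\proj^1_k$ whose polar part is pre-assigned by the branching data of $(f_R)_{n-1}$, and whose zeroes are the images $\ol\alpha_i$ of the $(p-1)u_{n-1}$ zeroes $\alpha_i$ of $H_n$ under the specialization to the rim. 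Realizing smoothness throughout the annulus is equivalent to this $\omega$ being the rim datum of a compatible Hurwitz-tree-like configuration that extends inward to $r = 0$.

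The problem therefore becomes: can we choose the positions of the $\alpha_i$ on the circle of valuation $\tfrac{1}{u_{n-1}(p-1)}$ so that the associated $\omega$ is the rim datum of the desired configuration? The plan is to work inductively along the Hurwitz tree of $(f_R)_{n-1}$ and attach a single new "rim" vertex carrying the required differential. The vanishing of the Hurwitz tree obstruction for the Oort conjecture (\cite[Ch.~4]{Br:rt}) guarantees that a candidate tree with the prescribed invariants exists, so what must be verified is that the differential form predicted at the new vertex is \emph{actually realized} by a polynomial $H_n$ of degree $(p-1)u_{n-1}$ with simple zeroes on the rim circle. I expect this to reduce to a system of polynomial equations in the $\ol\alpha_i$ whose expected dimension of solutions matches the free parameters (the moduli of $(p-1)u_{n-1}$ marked points on a circle, modulo the relevant automorphism group).

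The hard part will be showing that this system of equations is actually solvable, i.e., that the parameter space of admissible configurations is nonempty. A naive dimension count is not enough, because the equations could in principle be degenerate or impose strictly more conditions than the generic count suggests; one must produce a genuine solution. I would expect this to be done via an inductive argument on $n$, together with a careful analysis of the tangent directions in which a generic rim configuration can be deformed, using the fact that the analogous result at level $n-1$ (and ultimately the base case $n=1$ from Theorem \ref{Tzplifting}) already furnishes a smooth configuration which can be "thickened" by one $\ints/p$-step. Establishing this rigidity-versus-flexibility balance is, in my view, precisely where the substantive content of Proposition \ref{Psmoothing} lies.
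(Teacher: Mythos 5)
Your first paragraph is fine, but it only re-derives Lemma \ref{Lsepsmooth} (the equivalence ``$\delta_n(0)=0$ iff the slope of $\delta_n$ is everywhere maximal iff the special fiber is smooth''), which the paper already has; it is not the content of Proposition \ref{Psmoothing}. The genuine gap is in your last two paragraphs: after translating the problem into choosing the positions of the zeroes $\ol\alpha_i$ so that the induced deformation datum fits a compatible configuration, you explicitly defer the solvability of the resulting system (``the hard part will be showing that this system of equations is actually solvable,'' ``I would expect this to be done via an inductive argument\dots''). That solvability \emph{is} the proposition; nothing in your proposal establishes it. Moreover, the tool you invoke to supply existence does not do the work you want: the vanishing of the Hurwitz tree obstruction in \cite[Ch.\ 4]{Br:rt} only shows that an abstract tree with the right numerical invariants exists, and for cyclic $p$-Sylow subgroups of order $>p$ there is no analogue of Lemma \ref{Lboundarystructure} that would let one glue such combinatorial data into an actual action or cover (this is exactly the obstacle recorded in Remark \ref{Rbighurwitz}). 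So even granting the existence of a candidate tree, you have no mechanism to realize its rim differential by a polynomial $H_n$ of degree $(p-1)u_{n-1}$ with simple zeroes of the prescribed valuation, and a dimension count of marked points cannot substitute for this.

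For comparison, the paper's argument (from \cite{OW:oc}) proceeds quite differently: one considers the minimal $r\geq 0$ such that $f_R|_{\mc{D}_r}$ has smooth special fiber, shows via \cite[Proposition 6.1]{We:fr} that $r\leq \frac{1}{u_{n-1}(p-1)}$, and then shows that whenever $r>0$ one can modify $H_n$ (keeping its degree and the valuation of its zeroes) so as to strictly decrease $r$. This ``decrease the critical radius'' step is where the real work happens: it amounts to solving certain differential equations over $k$, which reduce to systems of linear equations over $\FF_p$ that are overdetermined yet always solvable. Finally, a separate argument shows that the infimum of $r$ over admissible $H_n$ is attained by some $H_n$ defined over a finite extension of $W(k)$, so the minimum must be $0$. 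Neither of these two key ingredients---the explicit deformation-theoretic/linear-algebra mechanism for improving $H_n$, and the attainment of the minimum---appears in your proposal, so as written it is a reformulation of the problem rather than a proof.
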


\begin{proof}[Very rough sketch of proof] If $f_R$ is not smooth, there is a minimum $r \geq 0$ corresponding to a maximal disk $\mc{D}_r$
such that $(f_R)\left|_{\mc{D}_r}\right.$ is smooth.  One can show using 
\cite[Proposition 6.1]{We:fr} that $r \leq \frac{1}{u_{n-1}(p-1)}$.  One then shows that if $r > 0$ and $R$ is taken large enough, 
then one can adjust $H_n$ as in the proposition to decrease $r$ (we think of $r$ as depending on $H_n$).  This involves solving differential equations
over $k$, which in turn reduces to solving systems of linear equations over $\FF_p$.  Amazingly, these systems are overdetermined yet
still always have solutions!  
Lastly, one independently shows that there is some particular $H_n$ as in the proposition where $r$ takes a minimum (and $H_n$ can be realized
over some \emph{finite} extension $R/W(k)$).  This minimum must be $0$, and this is the $H_n$ we seek.
\end{proof}

\begin{remark}\label{Rnotexplicit}
Note that this proof is non-constructive.  In particular, we have no control over how large we must take $R$.
\end{remark}
\begin{corollary}\label{Csmoothing}
Suppose $B/k[[t]]$ is a $\ints/p^{n-1}$-extension with upper jumps $(u_1, \ldots, u_{n-1})$ that lifts to characteristic zero, and that the
lift to characteristic zero is in the form of (\ref{Egenform}) with $H_1, \ldots, H_{n-1}$ polynomials in $T^{-1}$ with $\deg H_1 = u_1$ and
$\deg H_i = u_i - u_{i-1}$ for $2 \leq i \leq n-1$.  Suppose further that all the zeroes of 
$H_1(T^{-1}), \ldots, H_{n-1}(T^{-1})$ have valuation
greater than $\frac{1}{u_{n-1}(p-1)}$.  Then there is a $\ints/p^n$-extension
$A_0/k[[t]]$ with upper jumps $(u_1, \ldots, u_{n-1}, pu_{n-1})$ and a $\ints/p^{n-1}$-subextension $B/k[[t]]$ that lifts to characteristic zero.
\end{corollary}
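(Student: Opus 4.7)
The plan is to follow Framework \ref{Fproof} precisely: adjoin one more Artin-Schreier-like layer to the given lift of $B/k[[t]]$, apply Proposition \ref{Psmoothing} to force the resulting total space to have smooth special fiber, and then invoke Proposition \ref{Pbirationalactual}(ii) to conclude.

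First, I would fix the given lift $B_R/R[[T]]$ of $B/k[[t]]$ in the prescribed form $Z_1^p = H_1(T)$, $Z_i^p = Z_{i-1}H_i(T)$ for $2 \leq i \leq n-1$. To extend this to a birational $\ints/p^n$-extension $A_R/R[[T]]$, I adjoin one more equation of the shape $Z_n^p = Z_{n-1}H_n(T)$, subject to the constraints imposed by Step 1 of Framework \ref{Fproof} together with Lemma \ref{Lsepsmooth}: after possibly enlarging $R$, choose $H_n(T^{-1})$ to be a polynomial of degree $(p-1)u_{n-1}$ whose zeroes are all simple and all lie on the circle of valuation $\frac{1}{u_{n-1}(p-1)}$. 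This is possible because, by hypothesis, the zeroes of $H_1(T^{-1}), \ldots, H_{n-1}(T^{-1})$ all have valuation strictly greater than $\frac{1}{u_{n-1}(p-1)}$, so we are in the situation where (by Lemma \ref{Ldepthslope}) $\delta_{n-1}(\frac{1}{u_{n-1}(p-1)}) = \frac{1}{p-1}$ and consequently $\delta_n(\frac{1}{u_{n-1}(p-1)}) = \frac{p}{p-1}$ regardless of the specific choice of $H_n$.

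With this setup, whether the induced morphism $f_R \colon \Spec A_R \to \Spec R[[T]]$ has smooth special fiber is, by Lemma \ref{Lsepsmooth}, exactly the question of whether $\delta_n(0) = 0$. This is the step where the heavy lifting is done by Proposition \ref{Psmoothing}: after possibly enlarging $R$ once more, I can modify $H_n$, keeping its degree and the valuation of its zeroes the same, so that the special fiber of $f_R$ becomes smooth. Equivalently, the reduction of the generic extension of fraction fields is separable. This is the main obstacle in the entire argument; once it is granted, the rest is bookkeeping.

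Finally, with separable reduction in hand, the hypotheses of Proposition \ref{Pbirationalactual}(ii) are satisfied: the $H_i$ are polynomials in $T^{-1}$ with $\deg H_1 = u_1$, $\deg H_i = u_i - u_{i-1}$ for $2 \leq i \leq n-1$, and $\deg H_n = (p-1)u_{n-1}$; the $\ints/p^{n-1}$-subextension $B_R/R[[T]]$ reduces to $B/k[[t]]$ with upper jumps $(u_1, \ldots, u_{n-1})$ by hypothesis; and the reduction of $A_R/R[[T]]$ is separable by the previous step. Therefore $A_R/R[[T]]$ is a genuine lift of a $\ints/p^n$-extension $A_0/k[[t]]$ whose upper jumps are $(u_1, \ldots, u_{n-1}, pu_{n-1})$. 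By construction, the first $n-1$ equations defining $A_R$ are exactly those defining $B_R$, so the $\ints/p^{n-1}$-subextension of $A_0/k[[t]]$ is the reduction of $B_R/R[[T]]$, namely $B/k[[t]]$, as required.
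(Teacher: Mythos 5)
Your proposal is correct and follows exactly the route the paper takes: Step 1 of Framework \ref{Fproof} (adjoin $H_n$ of degree $(p-1)u_{n-1}$ with simple zeroes of valuation $\frac{1}{u_{n-1}(p-1)}$), Lemma \ref{Lsepsmooth} to translate separability into smoothness of the special fiber, Proposition \ref{Psmoothing} to achieve that smoothness, and Proposition \ref{Pbirationalactual}(ii) to identify the reduction as a $\ints/p^n$-extension with upper jumps $(u_1, \ldots, u_{n-1}, pu_{n-1})$ and subextension $B/k[[t]]$. The paper's own proof is just a citation of these same ingredients, so your argument is a faithful (and more detailed) version of it.
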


\begin{proof}
Follows from Step 1 of Framework \ref{Fproof}, Lemma \ref{Lsepsmooth}, and Proposition \ref{Psmoothing}.
\end{proof}

\subsubsection{Lifting general extensions.}\label{Sgeneral}
By Corollary \ref{Csmoothing}, we can, under mild hypotheses, extend a lift of a $\ints/p^{n-1}$-extension $B/k[[t]]$ 
to a lift of some $\ints/p^n$-extension $A_0/k[[t]]$ with minimal possible $n$th upper jump.  
The extension $A_0/k[[t]]$ can be written in the form of Proposition \ref{Ppnexplicit}, where $\deg x_n < pu_{n-1}$.  Its lift can be written in the form of 
(\ref{Egenform}) with $H_n$ a polynomial of degree $(p-1)u_{n-1}$ in $T^{-1}$.  
Our goal is now to show that we can, in fact, extend our lift of $B/k[[t]]$ to lift \emph{any} $\ints/p^n$-extension,
$A/k[[t]]$ (with $B/k[[t]]$ as the $\ints/p^{n-1}$-subextension) no matter what the upper jump.  In particular, if the $n$th upper jump of $A/k[[t]]$
is $u_n$, we wish to replace $H_n$ with a polynomial of degree $u_n - u_{n-1}$ in $T^{-1}$ that yields a birational lift of $A/k[[t]]$.  By Proposition
\ref{Pbirationalactual} (i), this gives an actual lift of $A/k[[t]]$.

Let us recall that our lift of the $\ints/p$-extension given birationally by $y^p - y = t^{-u_1}$ was given by the equation
$Z^p = 1 + \lambda^pT^{-u_1}$ (\S\ref{Szplifting}).  Now, suppose we were to replace $t^{-u_1}$ by $t^{-u_1} + q(t^{-1})$, where $q$  
has no terms of degree divisible by $p$.
If $Q(T^{-1})$ is any lift of $q(t^{-1})$ to $R[T^{-1}]$ that preserves the degree, the argument of Theorem \ref{Tzplifting} easily carries
through to show that $Z^p = 1 + \lambda^p (T^{-u_1} + Q(T^{-1}))$ gives a lift of the extension given by $y^p - y = t^{-u_1} + q(t^{-1})$.

The general situation is similar.  In fact, in \cite{OW:oc}, we prove the following result: 

\begin{prop}\label{Preplace}
Suppose $A/k[[t]]$ is obtained from $A_0/k[[t]]$ as above by replacing $x_n$ in 
Proposition \ref{Ppnexplicit} with $x_n + q(t^{-1})$, where $q$ has no terms of degree divisible by $p$.  
If $Q(T^{-1})$ is any lift of $q(t^{-1})$ to $R[T^{-1}]$, then replacing $H_n$ by 
$H_n + \lambda^pQ(T^{-1})$ in (\ref{Egenform}) gives a birational lift of $A/k[[t]]$.
\end{prop}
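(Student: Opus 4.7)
The plan is to reinterpret the equations (\ref{Egenform}) within Sekiguchi--Suwa theory and track the effect of the modification $H_n \mapsto H_n + \lambda^p Q(T^{-1})$ after reducing modulo the uniformizer of $R$ via Artin--Schreier--Witt theory.

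Let $B = R[[T]]_{(\pi)}$, a DVR with residue field $k((t))$. Pulling the given lift $A_R/R[[T]]$ of $A_0/k[[t]]$ back to $B$ gives an unramified $\ints/p^n$-cover $\Spec C_0 \to \Spec B$. By Proposition \ref{Psszpn} this is the pullback of $\psi_n : \mc{W}_n \to \mc{V}_n$ along a morphism $\alpha_0 : \Spec B \to \mc{V}_n$, determined by elements $\xi_1, \ldots, \xi_n \in B$ (the images of the coordinates $X_1, \ldots, X_n$ on $\mc{V}_n$). Combining the generic-fibre isomorphism $\beta^{(n)}$ of Remark \ref{Rss2gm} with (\ref{Egenkummer}), the pullback has generic equations in the Kummer form (\ref{Egenform}) with
\begin{equation*}
H_1 = 1 + \lambda^p \xi_1, \qquad H_i = G_{i-1}(\xi_1, \ldots, \xi_{i-1}) + \lambda^p \xi_i \quad (2 \le i \le n).
\end{equation*}
Hence replacing $H_n$ by $H_n + \lambda^p Q(T^{-1})$ in (\ref{Egenform}) corresponds to the new morphism $\alpha : \Spec B \to \mc{V}_n$ obtained from $\alpha_0$ by replacing $\xi_n$ with $\xi_n + Q(T^{-1})$; pulling back $\psi_n$ along $\alpha$ produces an unramified $\ints/p^n$-cover $\Spec C \to \Spec B$ whose generic fibre is the $\ints/p^n$-extension defined by the modified (\ref{Egenform}).

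Next I would identify the reduction of $\Spec C$ modulo $\pi$ with $A/k[[t]]$ birationally. By Proposition \ref{Pkaswtheory}, the reduction of $\mc{V}_n$ is $W_n$ with $X_i$ reducing to the $i$th Witt coordinate, so the reduction of $\alpha$ is the Witt vector $(\bar\xi_1, \ldots, \bar\xi_{n-1}, \bar\xi_n + q(t^{-1})) \in W_n(k((t)))$. By assumption $\alpha_0$ reduces to a Witt vector defining $A_0$, which by Remark \ref{Rwitt} is Artin--Schreier--Witt equivalent to $(x_1, \ldots, x_n)$ from Proposition \ref{Ppnexplicit}. An elementary check using ghost components shows that Witt addition of $(0, \ldots, 0, q(t^{-1}))$ fixes the first $n-1$ coordinates and adds $q(t^{-1})$ to the last: the only nonzero ghost of $(0, \ldots, 0, q)$ is the $n$th, which equals $p^{n-1}q$, and this contributes exactly $q$ to the $n$th Witt coordinate. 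Hence the reduction of $\alpha$ is equivalent to $(x_1, \ldots, x_{n-1}, x_n + q(t^{-1}))$, the Witt vector defining $A$. Therefore the reduction of $\Spec C$ has fraction field $\Frac(A)$, and the integral closure of $R[[T]]$ in the modified extension is a birational lift of $A/k[[t]]$.

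The main obstacle is the structural identification $H_i = G_{i-1}(\xi_1, \ldots, \xi_{i-1}) + \lambda^p \xi_i$ uniformly in $n$. Proposition \ref{Pkaswtheory} merely asserts the existence of the $G_i$'s (and, as noted in Remark \ref{Rsscomplicated}, they are made fully explicit in \cite{SS:kasw2} only through $n = 3$, with rapidly growing combinatorial complexity thereafter). One handles this by induction on $n$, using that $\mc{V}_n$ is an iterated extension of $\mc{G}^{(\lambda^p)}$-torsors over $\mc{V}_{n-1}$ together with the compatibility square (\ref{Essn}) that identifies the generic fibre of $\psi_n$ with the standard Kummer map $\phi_n$. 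Once this Kummer-form description is available, the remainder of the argument is coordinate-matching under the reduction $\mc{V}_n \otimes k \cong W_n$, and is insensitive to the explicit shape of the $G_i$'s.
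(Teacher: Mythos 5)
Your reduction of the problem to ``the reduction of the modified extension differs from that of $A_0$ by Witt-vector addition of $(0,\dots,0,q)$'' is the right target, the unramifiedness of the cover over $B=R[[T]]_{(\pi)}$ is fine, and the ghost-component computation is correct. The proof breaks, however, at the sentence ``Hence replacing $H_n$ by $H_n+\lambda^pQ(T^{-1})$ \dots corresponds to the new morphism $\alpha$ obtained from $\alpha_0$ by replacing $\xi_n$ with $\xi_n+Q$.'' Proposition \ref{Psszpn} only guarantees that the given cover admits \emph{some} classifying map, i.e.\ \emph{some} presentation with $H_1=1+\lambda^p\xi_1$ and $H_i=G_{i-1}(\xi_1,\dots,\xi_{i-1})+\lambda^p\xi_i$, $\xi_i\in B$. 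It does not say that the particular polynomials $H_1,\dots,H_n$ fixed in the statement (those produced by Proposition \ref{Psmoothing} and Framework \ref{Fproof}) are of this form, and there is no reason they should be: solving $H_i=G_{i-1}(\xi_1,\dots,\xi_{i-1})+\lambda^p\xi_i$ with $\xi_i\in B$ amounts to a coefficientwise congruence $H_i\equiv G_{i-1}(\xi_1,\dots,\xi_{i-1})\pmod{\lambda^p}$ for the Gauss valuation, whereas all that is known about the $H_n$ coming from the smoothing step is Remark \ref{Requality} (prescribed degree, simple zeroes of valuation $1/(u_{n-1}(p-1))$), whose coefficients have valuations far below $v(\lambda^p)$. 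So the given presentation and an SS presentation of the same extension differ by a ``coboundary'' in $\psi_n(\mc{W}_n(B))$, and the operation $H_n\mapsto H_n+\lambda^pQ$ is not intertwined with $\xi_n\mapsto\xi_n+Q$ under such a change of presentation. Your closing remark identifies the obstacle as the inexplicitness of the $G_i$ and proposes to handle it by induction on the extension structure of $\mc{V}_n$; but that induction would only re-prove that SS pullbacks have Kummer equations of the displayed shape, not that the framework's $H_i$ do.

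The gap is not cosmetic, because the conclusion is genuinely presentation-sensitive: if $(H_1,\dots,H_n)$ and $(H_1',\dots,H_n')$ define the same extension, then $(H_1,\dots,H_n+\lambda^pQ)$ and $(H_1',\dots,H_n'+\lambda^pQ)$ differ by the order-$p$ character with Kummer class $(1+\lambda^pQ/H_n)(1+\lambda^pQ/H_n')^{-1}$, which in general has nontrivial reduction. Already for $n=1$: if $H_1=1+\lambda^pf$, then $H_1\mapsto H_1+\lambda^pQ$ adds $q$ to the Artin--Schreier class, while $H_1c^p\mapsto H_1c^p+\lambda^pQ$ adds $q\,\bar c^{-p}$, and these differ modulo $\wp$ in general. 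Hence any correct proof must use specific properties of the framework's $H_n$ (essentially knowledge of its reduction), which your argument never invokes. Note also that the paper itself does not prove this proposition but cites \cite{OW:oc}, where Sekiguchi--Suwa theory is deliberately avoided: there one writes the new Kummer class as the old one times $(1+\lambda^pQ/H_n)^{p^{n-1}}$ and exploits the fact that reduction is additive on characters with \'{e}tale reduction, together with control of the reduction of $H_n$ --- exactly the input your SS route is missing.
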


Unfortunately, we cannot apply Proposition \ref{Pbirationalactual}(i) in general.  For instance, if $\deg q > pu_{n-1}$, then $\deg q = u_n$, thus
$\deg Q \geq u_n$ and $\deg (H_n + \lambda^pQ) \geq u_n$.  But to apply Proposition \ref{Pbirationalactual}(i), we would need 
$\deg (H_n + \lambda^p Q) = u_n - u_{n-1}$.  However, it turns out that there is some polynomial $H_n'$ of degree $u_n - u_{n-1}$ in $T^{-1}$,
as well as some lift $Q$ of $q$ to $R[[T]]$, such that the ratio of $H_n'$ to $H_n + \lambda^p Q$ is a $p$th power in $\Frac(R[[T]])$.
Replacing $H_n$ by $H_n'$ must give the same extension of $\Frac(R[[T]])$ as replacing $H_n$ by $H_n + \lambda^p Q$.  By Proposition 
\ref{Preplace}, replacing $H_n$ by $H_n'$ gives a birational lift, and by Proposition \ref{Pbirationalactual}(i), this is an actual lift.  Thus
we have completed step $2$ of Framework \ref{Fproof}.  Explicitly:

\begin{prop}\label{Pgeneralextend}
Suppose $B/k[[t]]$ is a $\ints/p^{n-1}$-extension with upper jumps $(u_1, \ldots, u_{n-1})$ that lifts to characteristic zero, and that the
lift to characteristic zero is in the form of (\ref{Egenform}) with $H_1, \ldots, H_{n-1}$ satisfying the assumptions of Corollary 
\ref{Csmoothing}.  Then any $\ints/p^n$-extension
$A/k[[t]]$ with $\ints/p^{n-1}$-subextension $B/k[[t]]$ lifts to characteristic zero.
\end{prop}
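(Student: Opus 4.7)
The plan is to carry out Framework \ref{Fproof} using the tools just assembled. First, applying Corollary \ref{Csmoothing} to the given lift of $B/k[[t]]$ produces a specific $\ints/p^n$-extension $A_0/k[[t]]$ having $B/k[[t]]$ as its $\ints/p^{n-1}$-subextension and upper jumps $(u_1,\ldots,u_{n-1},pu_{n-1})$, together with an explicit lift of the form (\ref{Egenform}) in which $H_n \in R[T^{-1}]$ has degree $(p-1)u_{n-1}$ and all zeroes of the $H_i$ are simple and lie in the open unit disc. Writing $A_0$ in the form of Proposition \ref{Ppnexplicit} gives top Artin-Schreier--Witt datum $x_n^{(0)}$ with $\deg x_n^{(0)} < pu_{n-1}$.

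Next, let $A/k[[t]]$ be any $\ints/p^n$-extension with $\ints/p^{n-1}$-subextension $B$, and use Proposition \ref{Ppnexplicit} to write its top datum as $x_n$. Since $A$ and $A_0$ share the same subextension $B$, the difference $q(t^{-1}) := x_n - x_n^{(0)}$ is a polynomial in $t^{-1}$ with no terms of degree divisible by $p$. Choose any polynomial lift $Q(T^{-1}) \in R[T^{-1}]$ of $q$. Then Proposition \ref{Preplace} tells us that replacing $H_n$ by $H_n + \lambda^p Q(T^{-1})$ in (\ref{Egenform}) gives a birational lift $A_R/R[[T]]$ of $A/k[[t]]$.

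The remaining, and decisive, step is to convert this birational lift to an actual lift. The degree of $H_n + \lambda^p Q$ can be as large as $u_n$, so Proposition \ref{Pbirationalactual}(i) does not apply directly. However, the $\ints/p^n$-extension of $\Frac(R[[T]])$ defined by (\ref{Egenform}) is unchanged when $H_n$ is multiplied by a $p$th power in $\Frac(R[[T]])^\times$ (this corresponds to rescaling $Z_n$). Thus it suffices to produce a polynomial $H_n' \in R[T^{-1}]$ of degree exactly $u_n - u_{n-1}$, together with some lift $Q$ of $q$, such that $H_n'/(H_n + \lambda^p Q)$ is a $p$th power in $\Frac(R[[T]])$. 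Substituting $H_n'$ for $H_n + \lambda^p Q$ in (\ref{Egenform}) then yields a birational lift of the same extension $A/k[[t]]$ now satisfying every degree hypothesis of Proposition \ref{Pbirationalactual}(i), which upgrades it to an actual lift.

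The main obstacle is therefore the existence of the pair $(H_n',Q)$. The strategy is to construct $H_n'$ and correct $Q$ simultaneously by working term-by-term in decreasing degree in $T^{-1}$, absorbing each monomial of degree exceeding $u_n - u_{n-1}$ into a $p$th-power factor by adjusting $Q$ within its congruence class mod $p$. Two structural inputs make this feasible: $q$ itself contains no monomials of degree divisible by $p$, and by Corollary \ref{Ccomparejumps} one is either in the trivial case $u_n = pu_{n-1}$ or has $u_n > pu_{n-1}$ with $p \nmid u_n$, so the excess degrees fall into the precise congruence pattern needed for the $p$th-power cancellation. This bookkeeping is the one genuinely nontrivial calculation; everything else reduces to it via the propositions already proved.
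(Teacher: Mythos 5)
Your proposal follows the paper's own route exactly: Corollary \ref{Csmoothing} produces the lift of an extension $A_0/k[[t]]$ with jumps $(u_1,\ldots,u_{n-1},pu_{n-1})$, Proposition \ref{Preplace} gives a birational lift of $A$ with $H_n$ replaced by $H_n+\lambda^pQ$, and the replacement of $H_n+\lambda^pQ$ by a polynomial $H_n'$ of degree $u_n-u_{n-1}$ whose ratio to it is a $p$th power in $\Frac(R[[T]])$ lets Proposition \ref{Pbirationalactual}(i) upgrade this to an actual lift. The one genuinely hard point---the existence of such a pair $(H_n',Q)$, which in particular requires choosing $Q$ to be a non-obvious lift of $q$ so that the pole orders match modulo $p$---is precisely the step the paper itself only asserts and defers to \cite{OW:oc}, so your term-by-term sketch of it is at the same level of completeness as the exposition here.
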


However, in order to continue the induction, it is necessary that the zeroes of $H_n'(T^{-1})$ have valuation greater than
$\frac{1}{u_n(p-1)}$.  In our construction of $H_n'$, this is regrettably not always the case.  The best we can do so far is the following:

\begin{lemma}\label{Lbounds}
Suppose that there is no $a \in p\ints$ such that $u_n - pu_{n-1} < a \leq (u_n - pu_{n-1})\left(\frac{u_n}{u_n - u_{n-1}}\right)$.
Then the lift of $A/k[[t]]$ in Proposition \ref{Pgeneralextend} can be accomplished in the form of (\ref{Egenform}) with $H_n$ a polynomial
in $T^{-1}$ of degree $u_n - u_{n-1}$ and the zeroes of $H_n(T^{-1})$ having valuation greater than $\frac{1}{u_n(p-1)}$.
\end{lemma}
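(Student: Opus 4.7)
The plan is to produce the required polynomial $H_n'$ by starting from the birational lift supplied by Proposition \ref{Preplace} and then factoring out a suitable $p$-th power, so that the resulting $H_n'$ has the prescribed degree $u_n - u_{n-1}$ and, crucially, has all its zeroes of large enough valuation. The arithmetic hypothesis on $p\ints$ will enter precisely at the last step, when we analyze the Newton polygon of $H_n'$.

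First, I would invoke Proposition \ref{Preplace} to obtain a birational lift of $A/k[[t]]$ by replacing $H_n$ with $H_n + \lambda^p Q$ in (\ref{Egenform}), where $Q \in R[T^{-1}]$ is any lift of $q$, preserving the property that all monomials of $Q$ have degree coprime to $p$. Since $\deg H_n = (p-1)u_{n-1} < u_n$, the polynomial $H_n + \lambda^p Q$ has degree exactly $u_n$ in $T^{-1}$. To get back into the range of Proposition \ref{Pbirationalactual}(i), I would seek a rational function $g \in \Frac(R[[T]])$ with
\[
H_n + \lambda^p Q \;=\; g^p \, H_n',
\]
with $H_n'$ a polynomial in $T^{-1}$ of degree $u_n - u_{n-1}$. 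Substituting $Z_n \mapsto Z_n/g$ in (\ref{Egenform}) leaves the extension of $\Frac(R[[T]])$ unchanged and its $\ints/p^n$-action intact, so the resulting equations still define a birational lift of $A/k[[t]]$, and Proposition \ref{Pbirationalactual}(i) then promotes it to an actual lift.

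The construction of $g$ would proceed iteratively by peeling off the top-degree terms of $H_n + \lambda^p Q$. Write $g = 1 + g_1 T^{-1} + g_2 T^{-2} + \cdots$, and match coefficients of $T^{-j}$ in $g^p H_n'$ to those of $H_n + \lambda^p Q$ for $u_n \geq j > u_n - u_{n-1}$. At each degree one either absorbs the coefficient into $g^p$ (fixing a new $g_i$) or, when the degree in question is not amenable, forces a contribution into lower-degree terms of $H_n'$. The leading coefficients of $H_n + \lambda^p Q$ come from $\lambda^p Q$, so they have valuation at least $p/(p-1)$; this produces $g_i$ of reasonably large valuation and is what will eventually control the Newton polygon of $H_n'$.

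The hard part will be the control of valuations of the zeroes of $H_n'(T^{-1})$: for the induction to continue we need them all to have valuation strictly greater than $1/(u_n(p-1))$. This is where the hypothesis is used. The degrees of the absorbed terms sit between $u_n - u_{n-1} + 1$ and $u_n$, and the ``danger zone'' of Newton polygon slopes that can be introduced during the factoring corresponds precisely to the interval
\[
\bigl(u_n - p\,u_{n-1},\; (u_n - p\,u_{n-1}) \cdot \tfrac{u_n}{u_n - u_{n-1}}\bigr].
\]
If this interval contains some $a \in p\ints$, then the term of degree $a$ in the factorization must be absorbed by $g^p$, and a short valuation estimate shows that this forces one of the slopes of the Newton polygon of $H_n'(T^{-1})$ to be too shallow, producing a zero of valuation at most $1/(u_n(p-1))$. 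Conversely, when no such $a$ exists, every absorbable degree lies either above the critical value $u_n$ (and is hence trivial) or is matched by a term whose natural valuation forces the corresponding slope of $H_n'$ to exceed $-1/(u_n(p-1))$. I expect this Newton polygon computation, carried out degree by degree along the iterative construction of $g$, to be the technical heart of the proof.
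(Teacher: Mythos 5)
Your overall strategy is the one the paper itself sketches (the lemma is quoted from \cite{OW:oc} without proof here): start from the birational lift with $H_n + \lambda^p Q$ supplied by Proposition \ref{Preplace}, replace it by a polynomial $H_n'$ of degree $u_n - u_{n-1}$ differing from it by a $p$th power in $\Frac(R[[T]])$, and then invoke Proposition \ref{Pbirationalactual}(i). But as written your construction of the $p$th-power factor does not work, and the step that actually constitutes the lemma is missing. First, your ansatz $g = 1 + g_1T^{-1} + g_2T^{-2} + \cdots$ is an infinite series in $T^{-1}$, which in general is not an element of $\Frac(R[[T]]) \subseteq K((T))$; the substitution $Z_n \mapsto Z_n/g$ is only harmless if $g$ genuinely lies in $\Frac(R[[T]])$. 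Worse, a simple order count at $T=0$ shows the naive factorization cannot exist: if $H_n + \lambda^p Q$ has pole order $u_n$ and $H_n'$ has pole order $u_n - u_{n-1}$, then $g$ must satisfy $p\,\ord_T(g) = -u_{n-1}$, which is impossible whenever $p \nmid u_{n-1}$ (the generic case, by Corollary \ref{Ccomparejumps}). This is why the paper's sketch says one must also choose ``some lift $Q$ of $q$'' carefully, not an arbitrary polynomial lift as in Proposition \ref{Preplace}; the freedom in $Q$ (e.g.\ allowing poles at auxiliary points of positive valuation reducing to $t=0$) is part of the construction, and your proposal fixes $Q$ at the outset and never revisits it.

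Second, the actual content of Lemma \ref{Lbounds} is the quantitative conclusion: under the hypothesis that no $a \in p\ints$ lies in $\bigl(u_n - pu_{n-1},\,(u_n - pu_{n-1})\tfrac{u_n}{u_n - u_{n-1}}\bigr]$, the zeroes of $H_n'(T^{-1})$ all have valuation greater than $\tfrac{1}{u_n(p-1)}$. Your proposal reduces this to a Newton polygon estimate but then only asserts it (``a short valuation estimate shows\ldots'', ``I expect this Newton polygon computation\ldots to be the technical heart''), with no derivation of why the critical interval is precisely the one in the hypothesis, nor why its endpoints involve the factor $\tfrac{u_n}{u_n - u_{n-1}}$. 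Identifying the right frame (slopes of the Newton polygon of $H_n'$ versus the threshold $\tfrac{1}{u_n(p-1)}$, with the coefficients of $\lambda^p Q$ having valuation at least $\tfrac{p}{p-1}$) is a reasonable start, but without carrying out the coefficient-by-coefficient valuation bookkeeping in the modified factorization -- which is exactly where the divisible-by-$p$ degrees in the stated interval cause trouble -- the lemma has not been proved.
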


Using Lemma \ref{Lbounds}, we can state our strongest result about the local lifting problem for $\ints/p^n$.

\begin{theorem}[\cite{OW:oc}]\label{Tzpnlifting}
Let $A/k[[t]]$ be a $\ints/p^n$-extension with upper jumps $(u_1, \ldots, u_n)$.  Then $A/k[[t]]$ lifts to characteristic zero 
so long as, for each $i$, $3 \leq i \leq n-1$, there does not exist $a_i \in p\ints$ with
$$u_i - pu_{i-1} < a_i \leq (u_i - pu_{i-1})\left(\frac{u_i}{u_i - u_{i-1}}\right).$$
\end{theorem}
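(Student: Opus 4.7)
The plan is to prove the theorem by strong induction on $n$ via a strengthened statement $\text{IH}(k)$: every $\ints/p^k$-extension $A/k[[t]]$ whose upper jumps $(u_1, \ldots, u_k)$ satisfy the hypothesis of the theorem at every index $3 \leq i \leq k$ admits a lift in the form (\ref{Egenform}) with polynomials $H_j(T^{-1})$ of the degrees prescribed by Proposition \ref{Pbirationalactual}(i) and with every zero of every $H_j(T^{-1})$ having valuation strictly greater than $1/(u_k(p-1))$. The theorem itself then follows from $\text{IH}(n-1)$ together with one additional application of Proposition \ref{Pgeneralextend}, which produces a lift of the full $\ints/p^n$-extension from the small-zeros lift of its $\ints/p^{n-1}$-subextension. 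No control is needed over the zeros of $H_n$ for the theorem's conclusion, which is precisely why the theorem imposes no condition at $i = n$.

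The base case $\text{IH}(2)$ has vacuous hypothesis and reduces to a direct verification that Green and Matignon's construction from \S\ref{Szp2lifting} satisfies the small-zeros property: for the minimal-jump case $u_2 = pu_1$ this is immediate from the explicit form (\ref{Ep2general}), since the zeros of $1 + \lambda^p T^{-u_1}$ and of $\exp_p(\mu^p T^{-u_1})$ both lie at valuation $p/(u_1(p-1)) > 1/(u_2(p-1))$, and for general $u_2$ one must track the deformation argument used in the proof of Theorem \ref{Tzp2lifting} to confirm that the same estimate persists after the passage to the general Kummer form.

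For the inductive step $\text{IH}(k-1) \Rightarrow \text{IH}(k)$, given a $\ints/p^k$-extension $A/k[[t]]$ satisfying the hypothesis of $\text{IH}(k)$, its $\ints/p^{k-1}$-subextension $B/k[[t]]$ satisfies the hypothesis of $\text{IH}(k-1)$; applying $\text{IH}(k-1)$ yields a small-zeros lift of $B/k[[t]]$, which feeds into Proposition \ref{Pgeneralextend} to produce a lift of $A/k[[t]]$. Finally, Lemma \ref{Lbounds} applied at level $k$ — whose nonexistence-of-$a$ hypothesis at this level is precisely the theorem's hypothesis at $i = k$, supplied by $\text{IH}(k)$ — refines this construction so that the zeros of $H_k(T^{-1})$ have valuation strictly greater than $1/(u_k(p-1))$, closing the induction.

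The main obstacle, and the reason this method does not prove the Oort conjecture for $\ints/p^n$ unconditionally, is the hypothesis of Lemma \ref{Lbounds} at each intermediate index, together with the nonconstructive smoothing of Proposition \ref{Psmoothing} that underlies it and provides no effective control over the required finite extension $R/W(k)$. Granting the nontrivial results of \S\ref{Sgenform}--\S\ref{Sgeneral}, the induction itself is essentially formal bookkeeping; the genuinely nontrivial points are the base-case small-zeros verification and the confirmation that the hypothesis at index $i = k$ is exactly what is needed at exactly the right stage of the induction to feed Lemma \ref{Lbounds}.
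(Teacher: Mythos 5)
Your proposal is correct and is essentially the paper's own proof made explicit: the paper likewise takes the explicit lifts from Theorems \ref{Tzplifting} and \ref{Tzp2lifting}, in the form (\ref{Egenform}) with the small-zeros property, as the base case and then repeatedly applies Proposition \ref{Pgeneralextend} and Lemma \ref{Lbounds}, the hypothesis at $3 \leq i \leq n-1$ being exactly what Lemma \ref{Lbounds} requires at the intermediate levels, with no condition at $i=n$ because the final step uses only Proposition \ref{Pgeneralextend}. One harmless numerical slip: the zeros of $\exp_p(\mu^p T^{-u_1})$ have valuation $\frac{1}{u_1(p-1)}$ rather than $\frac{p}{u_1(p-1)}$, which is still greater than $\frac{1}{u_2(p-1)}$, so your base-case conclusion stands.
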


\begin{proof}
The cases $n=1$ and $n=2$ are given by Theorems \ref{Tzplifting} and \ref{Tzp2lifting}.  Furthermore, the lifts
given explicitly in Theorem \ref{Tzplifting} and by Green and Matignon for Theorem \ref{Tzp2lifting} are in the form of (\ref{Egenform}) where 
$\deg H_1 = u_1$, $\deg H_2 = u_2 - u_1$, and the zeroes of $H_1(T^{-1})$ and $H_2(T^{-1})$ have valuation greater than $\frac{1}{u_1(p-1)}$ 
and $\frac{1}{u_2(p-1)}$, respectively.  
The proof then follows by repeatedly applying Proposition \ref{Pgeneralextend} and Lemma \ref{Lbounds}
\end{proof}

\begin{remark}\label{Rcyclic}
\begin{enumerate}[(i)]
\item The condition in Theorem \ref{Tzpnlifting} is vacuous for $n=3$, so $\ints/p^3$ is a local Oort group.
\item The condition is satisfied if $u_i = pu_{i-1}$ for all $i \geq 3$.  In particular, it holds for all ``minimal" sets of upper jumps
$(u_1, pu_1, \ldots, p^{n-1}u_1)$.
\item The example $p=5$, $n=4$, $(u_1, \ldots, u_4) = (1, 5, 34, 170)$, and $a_3 = 10$ shows that the condition is not always satisfied. 
\item The condition is somewhat strange: for a $\ints/p^4$-extension, the only upper jumps that need to be checked are the second and third!
Because of this strangeness, it seems reasonable to believe that the condition is not necessary, and that the Oort conjecture still holds.
\item Our proof gives no insight into the ring $R$ necessary for lifting to characteristic zero.  To show that a lift is possible over
$R = W(k)[\zeta_{p^n}]$ will require new techniques.
\end{enumerate}
\end{remark}

\section{Metacyclic groups}\label{Smetacyclic}
Throughout \S\ref{Smetacyclic}, $G \cong \ints/p \rtimes \ints/m$ with $p \nmid m$ and $G$ \emph{not} cyclic.
Also, $R/W(k)$ is a large enough finite extension and $K = \Frac(R)$.  Let $\mc{D} = \Spec R[[T]]$ be the open unit disc.  
Bouw, Wewers, and Zapponi have proven the following theorem (although they did not phrase it in terms of the KGB obstruction).

\begin{theorem}[\cite{BWZ:dd}, Theorem 2.1]\label{Tzpzm}
A $G$-extension $A/k[[t]]$ lifts to characteristic zero iff its KGB obstruction vanishes.  In other words, $A/k[[t]]$ lifts exactly when the 
(unique) positive jump in the higher ramification filtration for the lower numbering of $A/k[[t]]$ is congruent to $-1$ (mod $m$).
\end{theorem}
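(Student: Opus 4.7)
The ``only if'' direction is immediate: any lift of $A/k[[t]]$ forces the KGB obstruction to vanish, and by Proposition \ref{Pzpzm} this gives $h \equiv -1 \pmod m$ (and incidentally forces $m \mid p-1$, with $\ints/m$ acting faithfully on $\ints/p$). For the ``if'' direction I would construct an explicit $G$-equivariant lift. Enlarge $R$ to contain $\zeta_p$ (set $\lambda = \zeta_p - 1$) and a primitive $m$-th root of unity; crucially, take $\zeta_m \in R$ to be the Teichm\"uller lift $\tilde c$ of the image $c \in \FF_p^{\times}$ of a chosen generator $\sigma \in \ints/m$ under $\ints/m \hookrightarrow \Aut(\ints/p)$. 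The literal equality $\zeta_m = \tilde c$ in $R$ is the key simplification unlocked by the congruence $h \equiv -1 \pmod m$.

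First I would normalize the extension. The $\ints/m$-subextension $B := A^{\ints/p}$ of $A/k[[t]]$ is tame, so one can choose uniformizers with $B = k[[s]]$, $k[[t]] = k[[s^m]]$, and $\sigma:s\mapsto\zeta_m s$. Put $A/B$ in Artin--Schreier form $y^p - y = g(s)$; the semidirect-product structure requires $\sigma(y) = \tilde c \cdot y$ and hence $g(\zeta_m s) = \tilde c \cdot g(s)$. After a further change of variable in $s$ and Artin--Schreier equivalence, the congruence $h \equiv -1 \pmod m$ permits the normalization $g(s) = s^{-h}$ with $h + 1 = qm$. Correspondingly set $B_R := R[[S]]$ with $R[[T]] = R[[S^m]]$, $\sigma : S \mapsto \zeta_m S$, and look for a lift of $A/B$ as a Kummer extension $Z^p = H(S)$ with $H \in R(S)^{\times}$.

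The heart of the proof is to produce $H(S)$ simultaneously achieving three things: (a) the reduction of $Z^p = H(S)$ recovers $y^p - y = s^{-h}$; (b) the $\ints/p$-cover is \'etale at $S=0$ on the generic fiber (forced because $G$ non-cyclic cannot occur as inertia in characteristic zero, so the total ramification of order $pm$ over $T=0$ must split into $p$ geometric points each of index $m$); and (c) the equivariance identity $H(\zeta_m S) \equiv H(S)^{a} \pmod{(K(S)^{\times})^{p}}$ holds for the integer $a \in \{1,\dots,p-1\}$ representing $c^{-1}$. A natural candidate is a product of linear factors clustered on a single critical circle around $S=0$,
\[
H(S) \;=\; \prod_{j=1}^{q}\prod_{i=0}^{m-1}\bigl(1 - \zeta_m^{\,i} S/\rho_j\bigr)^{a_i},
\]
whose $h+1 = qm$ simple zeros form $q$ orbits of size $m$ under $\sigma$, the $\rho_j \in R$ having a common small valuation and the integer exponents $a_i$ chosen to match the $\ints/m$-orbit structure dictated by $\zeta_m = \tilde c$. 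Once such an $H$ is in hand, Proposition \ref{Pdifferent} upgrades the birational lift to a genuine lift: the generic different is $(h+1)(p-1)$, contributed by the $h+1$ simple zeros of index $p$, matching the special different of $A/B$ computed via equation (\ref{Ebasicdifferent}). The main obstacle is the construction of $H$ itself---verifying the equivariance identity (c) to all orders in $\lambda$, where the equality $\zeta_m = \tilde c$ (and thus the hypothesis $h \equiv -1 \pmod m$, which is exactly what guarantees the orbit structure needed for the Kummer ratio to be a $p$-th power) is indispensable. This verification, carried out via a cocycle calculation analogous to the Kummer--Artin--Schreier formalism of \S\ref{Ssstheory}, is essentially the content of \cite{BWZ:dd}.
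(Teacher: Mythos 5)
Your ``only if'' direction is fine and agrees with the paper (lifting forces the KGB obstruction to vanish, and Proposition \ref{Pzpzm} translates this into $h\equiv -1\pmod m$). The ``if'' direction, however, has a genuine gap: everything hinges on producing an $H(S)$ whose reduction is separable and recovers $y^p-y=s^{-h}$ equivariantly, and your sketch offers no mechanism for this---it defers the verification to \cite{BWZ:dd} (``essentially the content of [BWZ]''). That mischaracterizes what \cite{BWZ:dd} and \cite{BW:ll} actually do: they do not exhibit an explicit Kummer equation and check congruences to all orders in $\lambda$. The paper's proof runs in the opposite direction. By Lemma \ref{Lzpzmform} one reduces to the single model extension $A_h/k[[t]]$ with lower jump $h$; one then constructs characteristic-$p$ data first---a Hurwitz tree of conductor $h$ together with its logarithmic/exact deformation data (Proposition \ref{Phurwitzconstruction}), whose existence for large $h$ is the genuinely hard content---and finally manufactures the characteristic-zero action by gluing $\ints/p$-covers of punctured discs and annuli along their boundaries, the gluing being made $G$-equivariant by the rigidity statement of Lemma \ref{Lboundarystructure} (Proposition \ref{Phurwitz2action}, Corollary \ref{Churwitz2action}). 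The step you leave to the literature is exactly the step your ansatz would have to replace, and nothing in the proposal addresses it; note also Remark \ref{Rmetacyclicexamples}, which records that explicit lifting equations are in general not known for these groups.

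Moreover, the specific ansatz is too rigid. Putting all $h+1$ zeros of $H$ on a single critical circle forces the fixed points of the order-$p$ element to specialize to a single component, i.e.\ forces the associated Hurwitz tree to have smooth $\ol{Z}$. Since the $\ints/p$-part cannot be branched at the $\ints/m$-fixed point $S=0$ (the inertia there would be the non-cyclic group $G$, impossible in characteristic zero---you correctly note this), the deformation datum on that single component would have to be a logarithmic differential with $h+1$ simple poles away from $0$ and $\infty$, lying in free $\ints/m$-orbits, and a zero of order $h-1$ at infinity. As the paper explains in the proof of Proposition \ref{Phurwitzconstruction}, this one-component picture is only available for $h<p$; for $h>p$ the tree cannot be taken smooth, i.e.\ the branch points must be spread over several circles at different depths. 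In that range your candidate $H$ cannot have smooth, integrally closed reduction, so the different criterion (Proposition \ref{Pdifferent}) can never be satisfied and the construction collapses precisely where the theorem is difficult. (By contrast, in the cyclic case the single-circle lift $Z^p=1+\lambda^pT^{-h}$ works for all $h$ only because $T=0$ is itself a branch point; that escape is unavailable here.) Finally, a small point: the congruence $h\equiv -1\pmod m$ enters not through the Teichm\"uller identification $\zeta_m=\tilde c$, but through $m\mid h+1$, which is what allows the $h+1$ branch points to form free $\ints/m$-orbits avoiding $S=0$.
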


The following corollary immediately follows from Theorems \ref{TKGB} and \ref{Tzpzm}.

\begin{corollary}\label{Czpz2}
For odd primes $p$, the dihedral group $D_p$ is a local Oort group.
\end{corollary}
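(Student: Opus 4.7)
The plan is to deduce the corollary directly by combining Theorem \ref{TKGB} and Theorem \ref{Tzpzm}. First, I would observe that for any odd prime $p$, the dihedral group $D_p$ is isomorphic to $\ints/p \rtimes \ints/2$ with $\ints/2$ acting by inversion, so the hypothesis $p \nmid 2$ holds precisely because $p$ is odd, and $D_p$ is noncyclic. Thus $D_p$ fits the standing hypotheses on $G$ in \S\ref{Smetacyclic}, and Theorem \ref{Tzpzm} applies: a $D_p$-extension $A/k[[t]]$ lifts to characteristic zero if and only if its KGB obstruction vanishes.

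Next, I would invoke Theorem \ref{TKGB}, which lists $D_{p^n}$ (for every $n \geq 1$) among the KGB groups of $k$. In particular $D_p$ is a KGB group, meaning by definition that the KGB obstruction vanishes for \emph{every} $D_p$-extension $A/k[[t]]$. Combined with the previous step, this shows that every $D_p$-extension of $k[[t]]$ lifts to characteristic zero.

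Finally, to verify the non-vacuity clause in the definition of local Oort group, I would exhibit a faithful local $D_p$-action on a power series ring. Concretely, over $k((s))$ with $s^2 = t$, consider the Artin--Schreier extension defined by $y^p - y = 1/s$; the automorphisms $\sigma(s) = -s$, $\sigma(y) = -y$, $\tau(s) = s$, $\tau(y) = y+1$ satisfy the dihedral relation $\sigma\tau\sigma^{-1} = \tau^{-1}$, yielding a faithful $D_p$-action on $k((s,y))$. The integral closure of $k[[t]]$ in $k((s,y))$ is then a power series ring $k[[u]]$ by the structure theorem for complete DVRs, providing the required faithful local $D_p$-action. The main obstacle is essentially nonexistent: all of the substantive content has been packaged into Theorems \ref{TKGB} and \ref{Tzpzm}, and this corollary is really just a remark on what their combination gives for a specific group.
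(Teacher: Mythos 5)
Your proposal is correct and follows exactly the paper's route: the corollary is stated there as an immediate consequence of Theorems \ref{TKGB} and \ref{Tzpzm}, which is precisely the combination you make. Your extra verifications (that $D_p \cong \ints/p \rtimes \ints/2$ meets the standing hypotheses of \S\ref{Smetacyclic}, and that faithful local $D_p$-actions exist, cf.\ Lemma \ref{Lzpzmform} and Proposition \ref{Pdihedralexample}) are sound but just make explicit what the paper leaves implicit.
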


Note that, as a consequence of Proposition \ref{Pzpzm}, the KGB obstruction can only vanish if the conjugation action of 
$\ints/m$ on $\ints/p$ is faithful.  In particular, we must have $m | (p-1)$.  In light of this, we view the $m$th roots of unity as 
living in $(\ints/p)^{\times} \subseteq k^{\times}$.

\subsection{An explicit lifting example}\label{Sexplicitzpzm}
We present an example of a $G$-extension that lifts to characteristic zero, due to Green and Matignon.

\begin{prop}[\cite{GM:op}, IV Prop.\ 2.2.1]\label{Pdihedralexample} 
Let $p$ be an odd prime.  The $D_p$-extension of $k[[t]]$ given birationally by the equations 
\begin{equation}\label{Edihedral0}
x^2 = t, \  y^p - y = x^{-1},
\end{equation} 
lifts to characteristic zero (in fact, one can take $R = W(k)[\zeta]$, where $\zeta$ is a primitive $p$th root of unity). 
\end{prop}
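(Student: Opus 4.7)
The approach is to produce an explicit $D_p$-Galois extension of $R[[T]]$ lifting $(x^2 = t,\ y^p - y = x^{-1})$, taking $R = W(k)[\zeta_p]$ and setting $\lambda = \zeta_p - 1$, and then to verify the different criterion (Proposition \ref{Pdifferent}). The tame quadratic part lifts trivially as $X^2 = T$ by Proposition \ref{Pprimetop}. For the wild $\ints/p$-part $y^p - y = x^{-1}$, the Kummer lift of Theorem \ref{Tzplifting} suggests $Z^p = 1 + \lambda^p X^{-1}$, but that formula is \emph{not} stable under the involution $X \mapsto -X$, so it does not yield a $D_p$-normal extension of $\Frac R[[T]]$.

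The key is to replace the asymmetric Kummer equation by the symmetric one
\[
Z^p = \frac{2X + \lambda^p}{2X - \lambda^p}.
\]
Under $X \mapsto -X$ the right-hand side is sent to its own reciprocal, so the involution extends as $\tau\colon X \mapsto -X,\ Z \mapsto Z^{-1}$. One verifies $\tau^2 = 1$ and $\tau \sigma \tau^{-1} = \sigma^{-1}$, where $\sigma\colon Z \mapsto \zeta_p Z$; thus $\Frac R[[T]](X,Z)/\Frac R[[T]]$ is $D_p$-Galois. Next, expanding $(1 + \lambda Y)^p$ modulo $\lambda^{p+1}$ via $p\lambda \equiv -\lambda^p$ gives $(1 + \lambda Y)^p \equiv 1 + \lambda^p(Y^p - Y)$, while $(2X + \lambda^p)/(2X - \lambda^p) \equiv 1 + \lambda^p X^{-1}$ modulo $\lambda^{2p}$. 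Equating the two and reducing mod $\lambda$ yields exactly $y^p - y = x^{-1}$, and the reduction of $\tau$ is $(x,y) \mapsto (-x,-y)$ as required.

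Finally I would compute differents. For the special fiber, transitivity of the different in the tower $k[[t]] \subseteq k[[x]] \subseteq A$ combined with the fact that $y^p - y = x^{-1}$ over $k[[x]]$ has upper jump $1$ (so different $2(p-1)$) gives $\delta_s = 2(p-1) + p \cdot 1 = 3p - 2$. For the generic fiber, the branch locus of $\Spec A_R \otimes_R K \to \Spec R[[T]] \otimes_R K$ consists of $T = 0$ (tame of index $2$, where $Z^p$ specializes to $-1$, so there are $p$ geometric points above, contributing $p$) and $T = \lambda^{2p}/4$, where $X = \pm \lambda^p/2$ produces the zero and the pole of the Kummer element $Z^p$ (tame of index $p$, lying over $2$ geometric points, contributing $2(p-1)$). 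Hence $\delta_\eta = p + 2(p-1) = 3p - 2 = \delta_s$, so Proposition \ref{Pdifferent} promotes the birational lift to an actual lift.

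The only conceptual obstacle in this argument is the identification of the symmetric Kummer form in which the tame involution lifts; once it is in hand the remainder is a direct verification, and the construction is visibly defined over $W(k)[\zeta_p]$.
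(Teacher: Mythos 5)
Your proposal is correct and is essentially the paper's own proof: the paper writes the lift as $(X + \frac{\lambda^p}{2})^2 = T$, $Z^p = 1 + \lambda^p X^{-1}$ with $c(X) = -X - \lambda^p$, $c(Z) = Z^{-1}$, which becomes exactly your symmetric form $X^2 = T$, $Z^p = \frac{2X+\lambda^p}{2X-\lambda^p}$ after the substitution $X \mapsto X + \frac{\lambda^p}{2}$. The verification is the same in both: check the $D_p$-action, reduce via $Z = 1 + \lambda Y$ to recover (\ref{Edihedral0}), and match the special different $3p-2$ against the generic one coming from the branch points $T=0$ (index $2$) and $T = \lambda^{2p}/4$ (index $p$), then apply Proposition \ref{Pdifferent}.
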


\begin{proof}
There are elements $\sigma, c \in D_p$ with 
$$\sigma(x) = x,\ \sigma(y) = y+1,\ c(x) = -x, \ c(y) = -y.$$
Let $\zeta_p$ be a primitive $p$th root of unity, and let $\lambda = \zeta_p - 1$.
We claim the extension of $R[[T]]$ given birationally by equations 
\begin{equation}\label{Edihedral2}
(X + \frac{\lambda^p}{2})^2 = T,\ Z^p = 1 + \lambda^p X^{-1}
\end{equation}
gives a lift, under the automorphisms 
$$\sigma(X) = X, \ \sigma(Z) = \zeta_p Z, \ c(X) = -X - \lambda^p, \ c(Z) = \frac{1}{Z}.$$  

A straightforward calculation verifies that the given actions of $\sigma$ and $c$ indeed preserve the equations (\ref{Edihedral2}).  It is also clear
that the actions of $\sigma$ and $c$ generate a dihedral group.  Upon making the substitution $1 + \lambda Y = Z$, we see that the equations
(\ref{Edihedral2}) reduce to the equations (\ref{Edihedral0}), as in the proof of Theorem \ref{Tzplifting}.  
Since $c(Y) = \frac{-Y}{1 + \lambda Y}$ and $\sigma(Y) = \zeta Y + 1$, we see that the action of $D_p$ in characteristic
zero reduces to the action of $D_p$ in characteristic $p$.  Lastly, one checks that the unique positive lower jump on the special fiber is $1$ 
(Proposition \ref{Ppnexplicit}), so the degree of the different in characteristic $p$ is $3p-2$ by (\ref{Ebasicdifferent}). 
The generic fiber is branched at $T = (\frac{\lambda^p}{2})^2$ of index $p$ and at $T = 0$ of index $2$, so the degree of the different is also 
$3p-2$.  By the different criterion (Proposition \ref{Pdifferent}), the equations (\ref{Edihedral2}) give a lift.
\end{proof}

\begin{remark}\label{Rmetacyclicexamples}
Similar examples are given in \cite[Proposition 2.2.2]{GM:op} of $\ints/p \rtimes \ints/(p-1)$-extensions that lift, for any $p$.
In general, however, it is not known how to find explicit equations for lifting $G$-extensions, where $G \cong \ints/p \rtimes \ints/m$.
\end{remark}

\subsection{Structure theorems}\label{Sstructure}
Our first step toward the proof of Theorem \ref{Tzpzm} is a structure result completely classifying $G$-extensions of $k((t))$.

\begin{lemma}[\cite{Pr:cw3}, Lemma 2.1.2, \cite{Pr:fc}, Lemma 1.4.1 (iv)]\label{Lzpzmform}
Suppose $L/k((t))$ is a $G = \ints/p \rtimes \ints/m$-extension with unique positive lower jump $h$.  Then, up to a change in the parameter $t$, 
we have $$L \cong k((t))[u, y]/(u^m - t,\ y^p - y - u^{-h}).$$  Furthermore, given a primitive $m$th root of unity $\zeta_m$ 
(viewed as an element of $(\ints/p)^{\times}$), we can choose elements $\sigma \in G$ of order $p$ and $c \in G$ of order $m$ such that 
the Galois action is given by $$\sigma(u) = u,\ \sigma(y) = y+1,\  c(u) = \zeta_m u,\  c(y) = \zeta_m^{-h} y.$$  Also, 
we have $c\sigma c^{-1} = \sigma^{\zeta_m^{h}}$.
\end{lemma}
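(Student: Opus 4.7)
The plan is to analyze the tower $k((t)) \subset M \subset L$, where $M := L^P$ is the fixed field of the unique (hence normal) $p$-Sylow subgroup $P = \ints/p$ of $G$. Since $M/k((t))$ is a tame $\ints/m$-Galois extension and $k$ is algebraically closed, it is Kummer: after replacing $t$ by a suitable uniformizer of $k[[t]]$, one has $M = k((u))$ with $u^m = t$, and by relabeling the generator $c$ of $\Gal(M/k((t)))$ we may arrange $c(u) = \zeta_m u$ for any prescribed primitive $m$th root of unity $\zeta_m$.

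Artin-Schreier theory then presents $L = M[y]/(y^p - y - g(u))$ for some $g \in k((u))$. Using the freedom $y \mapsto y + \alpha$ (which replaces $g$ by $g - \wp(\alpha)$), one may normalize $g$ to be a polynomial in $u^{-1}$ whose nonzero monomials $a_i u^{-i}$ all have $p \nmid i$. Fix $\sigma$ generating $P$ with $\sigma(y) = y+1$ and pick any lift $\tilde c$ of $c$ to an automorphism of $L$. Since $P$ is normal in $G$, one has $\tilde c \sigma \tilde c^{-1} = \sigma^k$ for some $k \in \FF_p^\times$, and this identity translates via Artin-Schreier theory into $c(g) \equiv k \cdot g \pmod{\wp(M)}$. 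Comparing coefficients of $c(g) = \sum a_i \zeta_m^{-i} u^{-i}$ with those of $k g$ -- and noting that no nontrivial $\wp$-correction mixes distinct monomials whose exponents are coprime to $p$ -- yields $\zeta_m^{-i} = k$ for every $i$ with $a_i \neq 0$. Hence the support of $g$ lies in a single residue class mod $m$.

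Let $h$ be the largest index with $a_h \neq 0$. A direct computation, using $v_L(y) = -h$ (forced by $v_L(y^p - y) = v_L(u^{-h}) = -hp$ together with $v_L(y) < 0$), shows that the unique positive lower jump of $L/k((t))$ equals $h$, so in particular $\gcd(h,p) = 1$. To reduce $g$ to the pure form $u^{-h}$, I would inductively kill the subleading monomials in order of increasing $j \geq 1$: the substitution $u \mapsto u(1 + b u^{jm})$ -- which corresponds to the legitimate change of uniformizer $t \mapsto t(1 + bt^j)^m$ -- perturbs the coefficient of $u^{-(h - jm)}$ by $-h b a_h$ while shifting all other perturbations into strictly smaller exponents, so the choice $b = a_{h-jm}/(h a_h)$ (well-defined since $p \nmid h$) kills $a_{h-jm}$ without touching any previously eliminated coefficient. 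Finiteness of the support guarantees termination, and a final scaling $u \mapsto \lambda u$ (with $\lambda \in k^\times$ chosen to normalize the leading coefficient to $1$, possible since $k$ is algebraically closed) yields $g(u) = u^{-h}$.

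It remains to verify the explicit Galois action. Setting $\tilde c(y) := \zeta_m^{-h} y$ is consistent with $y^p - y = u^{-h}$ because $(\zeta_m^{-h} y)^p - \zeta_m^{-h} y = \zeta_m^{-h}(y^p - y) = \zeta_m^{-h} u^{-h} = c(u^{-h})$, where the first equality uses $\zeta_m^{p-1} = 1$ (valid since $\zeta_m \in (\ints/p)^\times$). A direct calculation then gives $c \sigma c^{-1}(y) = c(\zeta_m^h(y+1)) = y + \zeta_m^h$, i.e., $c\sigma c^{-1} = \sigma^{\zeta_m^h}$. The main obstacle in the proof is the normalization step: one must verify that the iterative substitutions in $u$ assemble into a single legitimate reparametrization of $t$, and that perturbations of subleading coefficients at each step cannot resurrect coefficients killed earlier -- this last point is essentially forced by the fact that the substitution $u \mapsto u(1 + b u^{jm})$ shifts exponents by multiples of $jm$ downward, so strictly higher (previously eliminated) coefficients are untouched.
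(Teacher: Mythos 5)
The paper itself gives no proof of this lemma---it is quoted from \cite{Pr:cw3} and \cite{Pr:fc}---so there is no internal argument to compare with; your Kummer-plus-Artin--Schreier proof is the standard one underlying those references, and I find it correct. Three small points to tighten. First, the dictionary between conjugation and the Artin--Schreier class goes the other way: writing $\tilde c(y)=ay+m_0$ with $a\in\FF_p^{\times}$, $m_0\in M$, one gets $c(g)\equiv a\,g \pmod{\wp(M)}$ but $\tilde c\sigma\tilde c^{-1}=\sigma^{a^{-1}}$, so your displayed congruence should read $c(g)\equiv k^{-1}g$; this is harmless, since all you use is that $c(g)$ is a fixed $\FF_p^{\times}$-multiple of $g$ modulo $\wp(M)$, and you verify $c\sigma c^{-1}=\sigma^{\zeta_m^{h}}$ directly from the explicit formulas at the end. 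Second, the substitution $u\mapsto u(1+bu^{jm})$ expands $u^{-i}$ into an \emph{infinite} series, so the support of $g$ does not stay finite; what actually terminates the process is that all new terms stay in the residue class of $h$ modulo $m$ and have pole order at most $h-jm$, so only the finitely many positive pole orders $h-m,h-2m,\dots$ ever need to be cleared (previously cleared ones, and $a_h$ itself, are untouched exactly as you argue), while the accumulated regular tail lies in $\wp(k[[u]])$ because $k$ is algebraically closed, hence can be discarded at the end. Third, you should add one line explaining why the map defined by $c(u)=\zeta_m u$, $c(y)=\zeta_m^{-h}y$ is really an element of $G$ of order $m$: any lift $\tilde c\in G$ of the tame generator satisfies $\wp(\tilde c(y))=\zeta_m^{-h}u^{-h}=\wp(\zeta_m^{-h}y)$, so $\tilde c(y)=\zeta_m^{-h}y+j$ with $j\in\FF_p$, and replacing $\tilde c$ by $\tilde c\sigma^{-j}$ gives the asserted action, which visibly has order $m$.
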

In particular, up to isomorphism, there is only one $G$-extension of $k((t))$ with given positive lower jump.

A more general structure theorem, this one in characteristic zero, will be useful for the proof of Theorem \ref{Tzpzm}.  In particular, we can 
classify certain $G$-extensions $B/R[[T]]\{T^{-1}\}$.  Note that $R[[T]]\{T^{-1}\}$ 
is a complete discrete valuation ring with residue field $k((t))$, and if $\pi$ is a uniformizer of 
$R$, then $\pi$ is also a uniformizer of $R[[T]]\{T^{-1}\}$.  As we will see, we only care about such extensions for which $B$ is abstractly isomorphic to
$R[[U]]\{U^{-1}\}$ (these correspond to extensions in which $R$ is algebraically closed).  Fix such an isomorphism.

We need three invariants to classify such extensions.  The first is the \emph{different} of the extension.  For the second, let $\sigma \in G$ be 
an order $p$ element.  Then $\frac{\sigma(U)}{U} - 1$ can be written as $\pi^n f(U)$, where $f(U)$ has coefficients of nonnegative valuation and at
least one coefficient of valuation zero.  The \emph{conductor} of $B/R[[T]]\{T^{-1}\}$ is the valuation of the reduction of $f(U)$ to $k((u))$ 
(where $u$ has valuation $1$).  Lastly, if $c \in G$ generates a subgroup $C$ of order $m$, then 
$c(T) \equiv \alpha T \pmod{T^2}$, where $\alpha$ reduces to a nonzero element $\ol{\alpha} \in k^{\times}$.  
This gives a character $C \to k^{\times}$, which is called the \emph{tame inertia character}.

\begin{lemma}[\cite{BW:ll}, \S2, in particular Proposition 2.3]\label{Lboundarystructure} 
The different, conductor, and tame inertia character of $G$-extension of $R[[T]]\{T^{-1}\}$ are well-defined.
Up to a change in the parameter $T$, there is at most one $G$-extension $R[[U]]\{U^{-1}\}/R[[T]]\{T^{-1}\}$ with a given different, conductor, and
tame inertia character.  This works even if $G$ is cyclic.
\end{lemma}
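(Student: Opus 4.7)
The plan is to prove the two assertions in sequence: first establish that each of the three invariants is genuinely an invariant of the extension (rather than of the presentation), then classify the extensions by decomposing them into a tame part and a wild part and showing that each piece is pinned down by the relevant invariants.

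For well-definedness, the different is manifestly intrinsic (it is the length of the module of K\"{a}hler differentials). For the tame inertia character, note that any two uniformizers of $R[[T]]\{T^{-1}\}$ differ by a unit whose reduction to $k((t))$ is itself a unit with a nonzero residue in $k^{\times}$; since $c(T) = \alpha T \pmod{T^2}$ and $m$ is prime to $p$, averaging $T$ by $c$ shows that $\overline{\alpha}$ depends only on the action of $c$ on the maximal ideal modulo its square, hence only on the extension. For the conductor, I first observe that $v_{\pi}(\sigma(U)/U - 1)$ is independent of the choice of $\sigma$ among generators of the unique $p$-Sylow (different generators give powers of $\sigma$, which multiply $\sigma(U)/U$ to first order by a unit), and independent of the choice of $U$ (any other uniformizer differs by a unit, and the change affects only higher-order terms in $f(U)$); the residue $f(u) \in k((u))$ is therefore well-defined up to an $m$-th root of unity, and its valuation is intrinsic.

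For uniqueness, I would exploit that $\ints/p$ is normal in $G$ (or the unique $p$-Sylow in the cyclic case), so the extension factors as $R[[U]]\{U^{-1}\} \to R[[V]]\{V^{-1}\} \to R[[T]]\{T^{-1}\}$, where the bottom piece is the $\ints/m$-fixed field and is tamely ramified of degree $m$. By Kummer theory over $R$ (which contains the $m$-th roots of unity) the bottom piece is determined up to isomorphism by its degree, and the choice of isomorphism is controlled by the tame inertia character: indeed, choosing $T$ appropriately, the bottom extension is forced to be $V^m = T$ with $c(V) = \overline{\alpha} V$, where $\overline{\alpha}$ is the tame character evaluated at $c$. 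Thus up to a change of parameter $T$ the tame quotient is pinned down.

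It remains to classify the $\ints/p$-extension $R[[U]]\{U^{-1}\}/R[[V]]\{V^{-1}\}$ lying above it, together with the conjugation action of $c$. By Proposition \ref{Psszp} every such extension is Kummer--Artin--Schreier, of the form $(1 + \lambda Y)^p = 1 + \lambda^p X$ for some $X \in R[[V]]\{V^{-1}\}$, and two choices $X_1, X_2$ give isomorphic extensions iff $(1 + \lambda^p X_1)/(1 + \lambda^p X_2)$ is a $p$-th power; the $c$-equivariance of $\sigma$ forces $c^*X$ to lie in the $\overline{\alpha}^{-h}$-eigenspace for $c$ on $R[[V]]\{V^{-1}\}$ modulo $p$-th powers, where $h$ is the lower jump, so $X$ may be assumed to be a Laurent polynomial in $V$ supported in degrees congruent to $-h \pmod{m}$. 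The key step is then to show that the valuation of $X$ (i.e., the most negative exponent appearing) is determined by the conductor, while the unit part of $X$ modulo $\mathfrak{m}_R$ and modulo $p$-th powers is determined (up to change of parameter $V \mapsto \zeta V$) by the different, via the counting of generic branch points in the open disc as in the proof of Theorem \ref{Tzplifting}. Combining these, $X$ is determined up to $\ints/m$-equivariant change of $V$, and hence the whole $G$-extension is determined up to change of $T$.

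The main obstacle I anticipate is the last sentence: after fixing the tame character and the different, one still has a moduli of possible $X$'s parametrized roughly by the shape of the Newton polygon in $\lambda$, and the conductor is a single integer that must cut this moduli down to a single orbit under the group of admissible parameter changes. Making this precise requires a careful analysis of which $X$ give rise to the same valuation of $f(U) \bmod \pi$ and showing that any two are related by a substitution $V \mapsto V \cdot u$ with $u \in R[[V]]^{\times}$ respecting the $c$-action; this is the content implicitly carried out in \cite{BW:ll}, and is where the cyclic case must be handled in parallel by replacing ``$c$ of order $m$'' with the tame part of the unique cyclic generator.
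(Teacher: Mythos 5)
Your proposal has the right skeleton (split off the tame quotient, then classify the wild degree-$p$ piece $C$-equivariantly), but the uniqueness half contains two genuine gaps. First, the appeal to Proposition \ref{Psszp} is not justified: that proposition classifies \emph{unramified} $\ints/p$-covers, whereas the degree-$p$ subextension of $R[[V]]\{V^{-1}\}$ you must classify has different $\delta$ that is itself one of the three classifying invariants and is strictly positive in exactly the situations where the lemma gets used (the boundary types $(\delta, h, \alpha)$ with $0 < \delta \leq 1$ appearing in the gluing argument for Proposition \ref{Phurwitz2action}). For $\delta > 0$ the extension is not \'{e}tale, so the normal form $(1+\lambda Y)^p = 1 + \lambda^p X$ with $X \in R[[V]]\{V^{-1}\}$ is not available; one needs instead the case analysis of Proposition \ref{Pmupreduction} (multiplicative reduction $y^p = u$ when $\delta$ is maximal, additive reduction otherwise), together with the uniqueness statements for the unit $u$ given there. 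Relatedly, ``counting generic branch points in the open disc as in Theorem \ref{Tzplifting}'' has no meaning here: $R[[V]]\{V^{-1}\}$ is a complete DVR, the different in question is that of a DVR extension, and it must be read off from the normal form, not from a branch locus. Second, and more fundamentally, the step you yourself flag as the ``main obstacle''---showing that fixing $(\delta, h, \alpha)$ cuts the set of possible $X$ (or $u$) down to a single orbit under admissible $C$-equivariant substitutions $V \mapsto V\cdot(\text{unit})$---\emph{is} the content of the lemma, and you do not carry it out; you defer it to \cite{BW:ll}. As written, the proposal is a strategy outline rather than a proof, and the cyclic case (which requires the same analysis with trivial $C$, i.e., classifying $\ints/p^n$-extensions of the boundary by different and conductor alone) is likewise not addressed beyond the assertion that it should run in parallel.

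For calibration: the paper itself gives no proof of this statement---it is quoted from \cite{BW:ll}, \S 2 (in particular Proposition 2.3)---and the argument there proceeds by exactly the kind of normal-form analysis for extensions of the boundary ring sketched above, so your plan is pointed in the right direction even though its key step is missing. Two smaller corrections: the bottom piece of your tower is the fixed field of the normal subgroup $\ints/p$ (an $\ints/m$-extension of the base), not ``the $\ints/m$-fixed field''; and $U$ is not a uniformizer of $R[[U]]\{U^{-1}\}$ (the uniformizer is $\pi$), so the well-definedness of the conductor must be argued in terms of different lifts of a uniformizer of the residue field $k((u))$, and in terms of replacing $\sigma$ by another generator of the $p$-Sylow, rather than ``uniformizers differing by a unit''; these points are repairable but need more care than the parentheticals you give.
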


A $G$-extension of $R[[T]]\{T^{-1}\}$ with degree of different $\delta$, conductor $h$, and tame inertia character $\lambda$ is said to be
\emph{of type $(\delta, h, \lambda)$}.

\begin{remark}\label{Rneedp}
Unfortunately, Lemmas \ref{Lzpzmform} and \ref{Lboundarystructure} do \emph{not} hold if $\ints/p \rtimes \ints/m$ is replaced by 
$\ints/p^n \rtimes \ints/m$, for $n > 1$.  See Remark \ref{Rbighurwitz}.
\end{remark}

\subsubsection{Outline of proof.}\label{Szpzmoutline}

We outline the proof of Theorem \ref{Tzpzm}, which will depend on results from \S\ref{Shurwitz}.

\begin{proof}[Proof of Theorem \ref{Tzpzm}]
Recall from \S\ref{Sgalois} 
that if $A_R/R[[T]]$ is a lift of the $G$-extension $A/k[[t]]$, then $A_R$ is abstractly isomorphic to $R[[U]]$, the ring of functions on an 
open unit disc.  Furthermore, for any faithful 
$G$-action on $R[[U]]$ by continuous $R$-linear automorphisms (that is, a $G$-action on the open unit disc), 
the fixed ring $R[[U]]^G$ is abstractly isomorphic to $R[[T]]$.
In Proposition \ref{Paction2hurwitz} (using Definition \ref{Dgeneralhurwitz}), we show that a faithful 
$G$-action on the open unit disc $\Spec R[[U]]$ with separable reduction and positive conductor $h$ at the boundary gives rise to a so-called \emph
{Hurwitz
tree with conductor $h$}.  In Proposition \ref{Phurwitz2action}, we reverse this process, showing how a 
Hurwitz tree with conductor $h$ gives rise to a $G$-action on the open unit disc.  By Corollary \ref{Churwitz2action}, the $G$-extension 
$R[[U]]/R[[T]]$ associated to such a $G$-action reduces to a $G$-extension $A_h/k[[t]]$ with unique positive lower jump $h$.

In Proposition \ref{Phurwitzconstruction}, we construct, for each $h \equiv -1 \pmod{m}$, a 
Hurwitz tree of conductor $h$, thus yielding a lift of a $G$-extension $A_h/k[[t]]$ with unique positive lower jump $h$.
Note that any $G$-extension $A/k[[t]]$ with positive lower jump 
$h$ is isomorphic to $A_h/k[[t]]$ (Lemma \ref{Lzpzmform}). 
Since we have constructed a lift of $A_h/k[[t]]$, there
must also be a lift of $A/k[[t]]$.  This completes the proof of Theorem \ref{Tzpzm}.
\end{proof}

\subsection{Hurwitz trees}\label{Shurwitz}
Suppose $G$ acts faithfully on the open unit disc $\mc{D} = \Spec R[[U]]$ by $R$-automorphisms with no inertia above
a uniformizer of $R$ (that is, the action reduces to a faithful $G$-action on $\Spec k[[u]]$).  
The idea of a Hurwitz tree is to break the disc $\mc{D}$ up into smaller discs, punctured discs, and annuli upon which $G$ acts, to isolate the 
salient features of these actions in combinatorial form, and to keep track of how they connect with each other.  Henrio gave the first major
exposition of Hurwitz trees (\cite{He:ht}), dealing with the case of a $\ints/p$-action on an open disc.  The concept was extended by Bouw and
Wewers to encompass $G$-actions.  For the basic facts about discs 
and annuli that we will use, see Appendix \ref{Sdiscs}.  Our concept of Hurwitz tree is more restrictive than that of \cite{BW:ll}, 
in that what we call a Hurwitz tree is called a ``Hurwitz tree of different $0$" in \cite{BW:ll}. 

Throughout \S\ref{Shurwitz}, let $\mc{D}_K$ be the generic fiber of $\mc{D}$.  Clearly, $G$ acts on $\mc{D}_K$.
Let $\sigma$ be an element of $G$ of order $p$.  \\

\subsubsection{The Hurwitz tree associated to a $G$-action on the open unit disc.}\label{Shurwitzaction}

Let $\del \mc{D} = \Spec R[[U]]\{U^{-1}\}$ be the boundary of $\mc{D}$.  Then a $G$-action on $\mc{D}$ gives a $G$-action on
$\del \mc{D}$, with some conductor $h$.
Then \cite[I, Claim 3.3]{GM:lg} shows that the number of (geometric)
fixed points of the action of $\sigma$ on $\mc{D}_K$ is $h+1$.  We may assume that $R$ (and $K$) are large enough so that the fixed points 
$y_1, \ldots, y_{h+1}$ of $\sigma$ are all defined over $K$.  We assume further that $h > 0$, so there are at least two fixed points. 
Lastly, we assume that $G$ acts without inertia above a uniformizer of $R$.
Now, let $Y^{st}$ be the stable model of $\proj^1_K$ corresponding to the marked disc
$(\mc{D}; y_1, \ldots, y_{h+1})$ (see \S\ref{Smarkeddiscs}---this is essentially the minimal stable model of $\proj^1_K$ 
that separates the specializations of the $y_i$ and $\infty$, where $\mc{D}_K \subseteq \proj^1_K$ is viewed as being the open unit disc centered 
at $0$).  Let $\ol{Y}$ be the special fiber of $Y^{st}$.
If $\ol{\infty}$ is the specialization of the point $\infty \in \proj^1_K$ to $\ol{Y}$, then let $\ol{V} = \ol{Y} \backslash \{\ol{\infty}\}$.  The set of points
of $\proj^1_K$ that specialize to $\ol{V}$ is a closed disc $\mc{E}$, strictly contained in $\mc{D}$.  In fact, $\mc{E}$ is the smallest closed disc 
containing all the points $y_1, \ldots, y_{h+1}$.

Let $\mc{E}^{st}$ be the model of $\mc{E}$
whose special fiber is $\ol{V}$.  Since the points $y_1, \ldots, y_{h+1}$ are permuted by $G$, we have that $G$ acts on $\mc{E}^{st}$.  
Furthermore, since $\sigma \in G$ fixes each $y_i$, it follows from the stability of $\ol{Y}$ 
that $\sigma$ must fix the special fiber $\ol{V}$ of $\mc{E}^{st}$ pointwise.

Let $\mc{D}' = \mc{D}/\langle \sigma \rangle$, that is,
$$\mc{D}' = \Spec R[[U]]^{\langle \sigma \rangle} \cong \Spec R[[T]]$$ for some parameter $T$.  Then $\mc{D}'$ is an
open unit disc with generic fiber $\mc{D}'_K$.  Let $z_1, \ldots, z_{h+1}$ be the images of $y_1, \ldots, y_{h+1}$ under the canonical map 
$\mc{D} \to \mc{D}'$.  As above, we let $Z^{st}$ be the stable model of $\proj^1_K$ corresponding to the marked disc $(\mc{D}'; z_1, \ldots, z_{h+1})$,
with special fiber $\ol{Z}$.  If $\ol{\infty}'$ is the specialization of $\infty$ to $\ol{Z}$ and $\ol{W} = \ol{Z} \backslash \{\ol{\infty'}\}$, then we define
$\mc{E}'$ and $(\mc{E}')^{st}$ in analogy to $\mc{E}$, $\mc{E}^{st}$ in the previous paragraph.  We have $\mc{E}' = \mc{E}/\langle \sigma \rangle$ and 
$(\mc{E}')^{st} = \mc{E}^{st}/\langle \sigma \rangle$.  Note that the map $\mc{E}^{st} \to (\mc{E}')^{st}$ reduces to a purely inseparable map
$\ol{V} \to \ol{W}$ on the special fiber, as $\sigma$ fixes $\ol{V}$ pointwise.

We build the \emph{Hurwitz tree} corresponding to the $G$-action on $\mc{D}$ as follows (note that we will not explicitly define a Hurwitz tree until
the next section).  Consider the \emph{dual graph} $\Gamma$ of $\ol{Z}$, whose edges $E(\Gamma)$ and vertices $V(\Gamma)$ correspond to the 
irreducible components and nodes of $\ol{Z}$, respectively.  An edge connects two vertices if the corresponding node is the intersection of the two
corresponding components.   We append another vertex and edge $v_0$ and $e_0$ so that $e_0$ connects $v_0$ to the vertex corresponding to the 
component containing $\ol{\infty}'$. 

If $e \neq e_0$ is in $E(\Gamma)$, then $\epsilon_e$ is defined to be the thickness of the corresponding node of $\ol{Y}$ (\S\ref{Sdiscs}).  We set
$\epsilon_{e_0}$ to be the thickness of the open annulus $\mc{D} \backslash \mc{E}$. 
If $v \neq v_0$ is in $V(\Gamma)$, then let $\eta'$ be the generic point of the component $\ol{S}_v$ of $\ol{Z}$ corresponding to $v$, 
and let $\eta$ be the point of $\ol{Y}$ lying above $\eta'$.  
Then the extension $\hat{\mc{O}}_{\ol{Y}^{st}, \eta}/\hat{\mc{O}}_{\ol{Z}^{st}, \eta'}$ gives an inseparable extension on residue fields and 
satisfies the conditions of Proposition \ref{Pmupreduction}.  We set $\delta_v$ to 
be the valuation of the different of this extension (Proposition \ref{Pmupreduction}) and $\omega_v$ to be the corresponding deformation datum
(\S\ref{Sdefdata}).  We have that $\omega_v$ is a meromorphic differential form on $\ol{S}_v$.  We set $\delta_{v_0} = 0$, and we do not define
$\omega_{v_0}$.  

Lastly, we note that if we choose a subgroup $C \cong \ints/m \subseteq G$, then $C$ acts on $\mc{D}'$, $\mc{E}'$, and $(\mc{E}')^{st}$. 
So $C$ acts on $\ol{Z}$, as well as on $\Gamma$.  Also, if $c \in C$, then $c^*(\omega_{c(v)})$ is a differential form on the component 
corresponding to $v$.

\begin{definition}\label{Dhurwitz}
The data consisting of the group $C \subseteq G$, the curve $\ol{Z}$, the marking $\ol{\infty}'$, the specializations
$\ol{z}_1, \ldots, \ol{z}_{h+1}$ of $z_1, \ldots, z_{h+1}$ to $\ol{Z}$, the graph $\Gamma$, 
the differential forms $\omega_v$ for $v \in V(\Gamma) \backslash \{v_0\}$, the numbers $\delta_v$ for $v \in V(\Gamma)$ and $\epsilon_e$ for 
$e \in E(\Gamma)$, and the $C$-action on $\ol{Z}$, comprise the \emph{Hurwitz tree} of the $G$-action
on $\mc{D}$.  
\end{definition}

\subsubsection{Hurwitz trees in general.}\label{Shurwitzgeneral}
The data of Definition \ref{Dhurwitz} satisfy many compatibilities.  We will define a general Hurwitz tree to be a collection of data in the form 
above satisfying these compatibilities.

\begin{prop}\label{Paction2hurwitz}
The data in the Hurwitz tree of Definition \ref{Dhurwitz} satisfy the following properties:

\begin{enumerate}[(i)]
\item $\ol{Z}$ has genus $0$ and is stably marked by $\ol{\infty}, \ol{z}_1, \ldots, \ol{z}_{h+1}$ (i.e., each irreducible component contains at 
least three points that are either marked or singular).
\item For each $v \in V(\Gamma)$, we have that $\delta_v$ is rational and satisfies $0 \leq \delta_v \leq 1$, with $\delta_v = 0$ iff $v = v_0$.
\item For each $v \in V(\Gamma) \backslash \{v_0\}$, the divisor of $\omega_v$ is supported at the 
marked points and the singular points of $\ol{Z}$.  In particular, $\omega_v$ has simple
poles at any of the points $\ol{z}_1, \ldots, \ol{z}_{n+1}$ on the component of $\ol{Z}$ corresponding to $v$.
\item If $\delta_v = 1$, then $\omega_v$ is logarithmic.  Otherwise, $\omega_v$ is exact.
\item If $e \in E(\Gamma)$, then $\epsilon_e$ is a positive rational number.
\item If a node $\ol{z}$ of $\ol{Z}$ lies on two components $\ol{S}'$ and $\ol{S}''$ corresponding to vertices $v'$ and $v''$ of $\Gamma$, then
$\ord_{\ol{z}}\omega_{v'} + \ord_{\ol{z}}\omega_{v''} = -2$.
\item In the situation of (vi), if $v' \neq v_0$ and the node $\ol{z}$ corresponds to an edge $e \in E(\Gamma)$, then 
$$\delta_{v'} - \delta_{v''} = (p-1) \epsilon_e (\ord_{\ol{z}}\omega_{v'} + 1).$$
\item The action of $C$ fixes $\ol{\infty}'$ and permutes the points $\ol{z}_1 \ldots, \ol{z}_{h+1}$.
\item If $C_{\ol{z}} \subseteq C$ is the stabilizer of a node $\ol{z}$ of $\ol{Z}$, then the characters describing the action of $C_{\ol{z}}$ on the
tangent spaces of the two components of $\ol{Z}$ intersecting at $\ol{z}$ are inverse to each other. 
\item If $\chi$ is the conjugation character of $C$ on $\ints/p \subseteq G$ (\S\ref{Sdefdata}), then 
$c^*\omega_{c(v)} = \chi(c)\omega_v$ for all $c$, $v$.
\end{enumerate}
\end{prop}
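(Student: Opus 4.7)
The strategy is to verify the ten conditions one at a time, grouping them according to what tools are needed. Properties (i), (v), (viii), (ix) are essentially formal consequences of the construction: (i) holds because $Z^{st}$ is by definition the stable model separating $\ol{\infty}',\ol{z}_1,\ldots,\ol{z}_{h+1}$; (v) holds because thicknesses of nodes (and of the open annulus $\mc{D}\setminus \mc{E}$) are always positive rationals (Appendix \ref{Sdiscs}); and (viii), (ix) follow because $C \subseteq G$ acts on $\mc{D}'$ fixing $\infty$, permuting $\{z_i\}$, and acting compatibly on the stable model, so the induced action on $\ol{Z}$ and on tangent spaces at nodes behaves as claimed (the character inversion in (ix) is the standard statement that at a node joining two $\proj^1$'s, the natural parameters on the two sides are inverse to each other up to unit).

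For (ii), (iii), (iv), (x), I would invoke the theory of deformation data developed in Appendix \ref{Sstablegraph}. At a vertex $v\neq v_0$, the extension $\hat{\mc{O}}_{Y^{st},\eta}/\hat{\mc{O}}_{Z^{st},\eta'}$ is a degree $p$ extension of complete discrete valuation rings of mixed characteristic satisfying the hypotheses of Proposition \ref{Pmupreduction}, which is exactly how $\omega_v$ and $\delta_v$ are defined. The bound $0<\delta_v\le 1$ is precisely the standard bound for the different of such a $\ints/p$-extension with inseparable reduction; the vanishing $\delta_v=0$ characterizes separability and hence occurs only at $v_0$ (where the extension is \'etale because $\sigma$ fixes $\ol{V}$ pointwise but not $\ol{\infty}$). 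The pole/zero structure in (iii) and the logarithmic versus exact dichotomy in (iv) are standard outputs of the deformation datum construction (see Appendix \ref{Sdefdata}): the points where $\omega_v$ can have nontrivial support are exactly the ones where ramification changes, i.e.\ the specializations of the fixed points of $\sigma$ and the singular points of $\ol{Z}$; and $\delta_v=1$ exactly when the extension degenerates to a $\mu_p$-torsor (giving logarithmic $\omega_v$) while $\delta_v<1$ corresponds to the $\ints/p$-torsor case (giving exact $\omega_v$). Property (x) is just the $C$-equivariance of the construction, since $C$ conjugates $\sigma$ by its character $\chi$.

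The heart of the proof is (vi) and (vii), the compatibility of deformation data across a node $\ol{z}$ corresponding to an edge $e$. The relation $\ord_{\ol z}\omega_{v'}+\ord_{\ol z}\omega_{v''}=-2$ comes from analyzing both differentials on the formal annulus at $\ol{z}$: if $t',t''$ are parameters on the two sides with $t't''=$ unit times a uniformizer of $R$, then $dt'/t'=-dt''/t''$ on the annulus, and matching the orders of $\omega_{v'}$ and $\omega_{v''}$ written in these parameters forces the sum of orders to be $-2$. For (vii), I would compute the different of the $\ints/p$-extension of the annulus in two ways. Writing the annulus $e$ as $\Spf R[[S,S']]/(SS'-\pi^{\epsilon_e})$ (so its thickness is $\epsilon_e$), the different of the induced $\ints/p$-extension at either end can be read off either from the deformation datum $\omega_{v'}$ via Proposition \ref{Pmupreduction}, or from the slope of the depth/different function on the annulus, which is linear with slope $(p-1)(\ord_{\ol z}\omega_{v'}+1)$ (this is essentially the content of the computation in Lemma \ref{Ldepthslope}, recast at the boundary instead of at interior points). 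The difference $\delta_{v'}-\delta_{v''}$ is then the total change over an annulus of thickness $\epsilon_e$, giving the stated identity.

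The main obstacle is (vii): writing down the correct local model for a $\ints/p$-extension of an annulus of given thickness, and extracting from it both the deformation datum on each end and the change in different. Once that calculation is in place (and it is essentially the one already done by Henrio \cite{He:ht} for $\ints/p$-actions, adapted to the generic point of a component of $\ol{Z}$), everything else assembles formally. The rest of the proof is checking that these local verifications are consistent with the $C$-equivariance and the global stability condition, which is straightforward.
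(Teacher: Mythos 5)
Your proposal is correct and follows essentially the same route as the paper, whose proof is little more than a list of references: (i), (iv), (v) from the definitions, (ii) from Proposition \ref{Pmupreduction}, (iii) from Wewers, (vi)--(vii) from Henrio's computations on $\ints/p$-covers of annuli, and (viii)--(x) from the $C$-equivariance of the construction (with (x) cited to \cite{BW:ll}). The only difference is that you sketch the annulus/depth-slope computation behind (vi) and (vii) explicitly rather than citing \cite[Ch.\ 5]{He:ht}, which is exactly the calculation that reference supplies.
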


\begin{proof}
Parts (i), (iv), and (v) follow from the definitions.  Part (ii) follows from Proposition \ref{Pmupreduction}.  Part (iii) follows from 
\cite[Prop.\ 1.7]{We:br}.  Parts (vi) and (vii) follow from \cite[Ch.\ 5]{He:ht}.  
Parts (viii) and (ix) follow because the Hurwitz tree comes from an action of $G$.  For a proof of part (x), see \cite[Prop.\ 3.4]{BW:ll}.
\end{proof}

\begin{definition}\label{Dgeneralhurwitz}
Suppose $\ol{Z}$ is a semistable curve over $k$ of genus $0$ with smooth distinct marked points $\ol{\infty}', \ol{z}_1, \ldots, \ol{z}_{h+1}$.
Let $\Gamma$ be the dual graph of $\ol{Z}$, with an extra vertex $v_0$ (called the \emph{root vertex}) 
and an extra edge $e_0$ connecting $v_0$ to the vertex corresponding
to the component containing $\ol{\infty}'$.  Suppose that each vertex $v \in V(\Gamma)$ has an
associated meromorphic differential form $\omega_v$ on the corresponding component (unless $v = v_0$) and an associated rational number 
$\delta_v$.  Suppose that each edge $e \in E(\Gamma)$ has an associated thickness $\epsilon_e$.  Lastly, suppose that the group $C \cong \ints/m$
acts on $\ol{Z}$ (thus on $\Gamma$), and has an injective character $\chi: C \to \FF_p^{\times}$ such that
$c^*\omega_{c(v)} = \chi(c)\omega_v$.  If this data satisfies parts (i)---(x) of 
Proposition \ref{Paction2hurwitz}, then it is called a \emph{Hurwitz tree of type $(C, \chi)$}.  
The \emph{conductor} of this Hurwitz tree is defined to be $h$.
We denote the Hurwitz tree by $(\ol{Z}; \ol{\infty}'; \ol{z}_1, \ldots, \ol{z}_{h + 1}; \omega_v; \delta_v; \epsilon_e)$
\end{definition}

Note that, as was mentioned at the beginning of \S\ref{Shurwitzaction}, the conductor of a $G$-action on the open unit disc at the boundary is 
equal to the conductor of its associated Hurwitz tree.  \\

\subsubsection{Lifting Hurwitz trees.}\label{Shurwitzlifting}
The main result of this section is the following:

\begin{prop}[\cite{BW:ll}, Theorem 3.6]\label{Phurwitz2action} 
Suppose we are given a Hurwitz tree $\mc{T} = (\ol{Z}; \ol{\infty}'; \ol{z}_1, \ldots, \ol{z}_{h + 1}; \omega_v; \delta_v; \epsilon_e)$
of type $(C \cong \ints/m, \chi)$ with conductor $h > 0$, and $\chi$ an injective character 
$C \to \FF_p^{\times}$.  If $G \cong \ints/p \rtimes_{\chi} C$, then there is a $G$-action on the open unit disc $\mc{D} = \Spec R[[U]]$, with
no inertia above a uniformizer of $R$, 
whose associated Hurwitz tree (Definition \ref{Dhurwitz}) is $\mc{T}$.  In particular, the 
conductor of the action of $G$ at the boundary $\del D = \Spec R[[U]]\{U^{-1}\}$ is $h$. 
\end{prop}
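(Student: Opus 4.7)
The plan is to invert the construction of \S\ref{Shurwitzaction}: first realize $\mc{T}$ as the semistable model associated with a marked disc in $\proj^1_K$, then build a $\ints/p$-cover piece by piece using Artin--Schreier theory on each component together with a standard annular cover across each edge, and finally use $\chi$ to promote the $\ints/p$-cover to a full $G$-action.

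After enlarging $R$, I first choose $K$-points $z_1,\ldots,z_{h+1}$ and $\infty$ of $\proj^1_K$ specializing to $\ol{z}_1,\ldots,\ol{z}_{h+1},\ol{\infty}'$ respectively, and spaced so that the resulting stable model of the marked disc is exactly $Z^{st}$ with node thicknesses $\epsilon_e$. Such a placement exists by induction from the leaves of $\Gamma$ inward, using the rationality of the $\epsilon_e$ (property (v)) and the stability hypothesis (property (i)). Let $\mc{E}\subseteq \proj^1_K$ be the smallest closed disc containing the $z_i$; this corresponds to the subtree of $\Gamma$ obtained by deleting $v_0$ and $e_0$, and its model $\mc{E}^{st}$ admits an admissible cover by the formal completions $\hat{\mc{E}}_v$ at each non-root component and by formal annuli $\hat{\mc{E}}_e$ at each non-root edge.

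On each $\hat{\mc{E}}_v$, Artin--Schreier theory on the function field $k(\ol{S}_v)$ produces a class whose associated deformation datum is $\omega_v$; lifting this class to a Kummer equation $Z^p = 1 + \lambda^p F_v(T)$ in the style of the proof of Theorem \ref{Tzplifting} yields a $\ints/p$-cover of $\hat{\mc{E}}_v$ whose restriction to each adjacent boundary annulus has different and conductor determined by $\delta_v$, $\omega_v$, and the local order at the corresponding node. The crucial compatibilities (vi) and (vii) of Proposition \ref{Paction2hurwitz} force the two boundary covers computed from the two sides of every edge $e$ to share the same different, conductor, and tame inertia character, so by Lemma \ref{Lboundarystructure} they become isomorphic after a suitable change of parameter on the annulus. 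Formal patching (as in the proof of Theorem \ref{Tlocalglobal}) then assembles these local covers into a single $\ints/p$-cover of $\mc{E}^{st}$; completing at the specialization of $\infty$ produces a $\ints/p$-action on the open unit disc $\mc{D}$. The tame $C$-action on $Z^{st}$ lifts uniquely by tame lifting (Proposition \ref{Ptame}), and condition (x), namely $c^*\omega_{c(v)} = \chi(c)\omega_v$, allows the local Artin--Schreier classes to be chosen $C$-equivariantly up to $\chi$, so the combined action is faithful of the desired type $\ints/p \rtimes_\chi C \cong G$; by construction there is no inertia above a uniformiser of $R$.

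Finally I verify that the Hurwitz tree associated to this $G$-action, as computed in \S\ref{Shurwitzaction}, is exactly $\mc{T}$: the semistable model, specializations, differentials, and thicknesses are built into the construction. For the conductor $h$ at the boundary, note that $\sigma$ has exactly $h+1$ geometric fixed points on the generic fibre (the marked $z_i$), so by \cite[I, Claim 3.3]{GM:lg} the boundary conductor equals $h$. The hard part is the patching step: the local $\ints/p$-covers on the various components must be chosen simultaneously so that their restrictions to every annulus agree, and $C$-equivariantly on top of that. Conditions (vi), (vii), and (iv) of Proposition \ref{Paction2hurwitz}, together with Lemma \ref{Lboundarystructure}, provide exactly the numerical data needed at each individual edge, while condition (x) together with the freedom to modify each Artin--Schreier representative by an element in the image of the Artin--Schreier operator supplies enough flexibility for a globally consistent choice; arranging this choice simultaneously across every edge and every $C$-orbit is where the real content of the theorem lies.
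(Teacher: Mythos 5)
Your overall strategy is the paper's: decompose the situation according to the tree $\Gamma$, build a $\ints/p$-cover over each vertex piece from the pair $(\delta_v,\omega_v)$, glue across edges using the classification of boundary extensions (Lemma \ref{Lboundarystructure}), and handle the tame part by $\chi$-equivariance. But two of your concrete steps are wrong as stated, and the step you yourself flag as ``the real content'' is exactly the part you have not supplied. First, the local construction: an equation $Z^p = 1 + \lambda^p F_v(T)$ with $F_v$ integral, ``in the style of Theorem \ref{Tzplifting},'' produces a cover with \emph{separable} reduction, i.e.\ different $0$ at the boundary; but every non-root vertex has $\delta_v>0$ (Proposition \ref{Paction2hurwitz}(ii)), so the cover of $\mc{U}_v$ must have \emph{inseparable} (multiplicative or additive) reduction. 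The correct local model is the one of Proposition \ref{Pmupreduction}: when $\delta_v=1$ and $\omega_v$ is logarithmic take $y^p=u$ with $d\ol{u}/\ol{u}=\omega_v$, and when $0<\delta_v<1$ and $\omega_v$ is exact take the additive form with $d\ol{u}=\omega_v$ and $n$ determined by $\delta_v$ (this is what \cite[\S 3.4]{BW:ll} does); an Artin--Schreier class over $k(\ol{S}_v)$ does not have an ``associated deformation datum'' at all. Second, the gluing: the boundary extensions seen from the two sides of an edge do \emph{not} share the same invariants --- by (vi) and (vii) their conductors are opposite, their tame characters inverse, and their differents differ by $(p-1)\epsilon_e h_1$, which is generally nonzero. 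So you cannot identify them directly after a change of parameter. The actual mechanism (and the place where $\epsilon_e$ and condition (vii) enter) is the construction in \cite[\S 3.5]{BW:ll} of a $G_e$-equivariant open annulus of thickness $\epsilon_e$ whose two boundaries have the two prescribed types $(\delta_1,h_1,\alpha_1)$ and $(\delta_1+(p-1)\epsilon_e h_1,-h_1,\alpha_1^{-1})$; only then does Lemma \ref{Lboundarystructure} let you identify each boundary of that annulus with the corresponding vertex boundary and glue. You mention ``a standard annular cover across each edge'' in your plan, but your detailed paragraph replaces it with a direct identification that condition (vii) in fact forbids.

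Third, you never treat the root edge $e_0$: the $G$-action is on the cover of the whole open disc $\mc{D}$, not just of $\mc{E}^{st}$, and the annulus $\mc{D}\setminus\mc{E}$ of thickness $\epsilon_{e_0}$ is where the different must interpolate from $\delta_{v_1}$ down to $\delta_{v_0}=0$, with outer boundary of type $(0,h,\cdot)$ where $h=\ord_{\ol{\infty}'}(\omega_{v_1})+1$. This is precisely what gives separable reduction, hence ``no inertia above a uniformizer,'' and boundary conductor $h$; ``completing at the specialization of $\infty$'' does not produce it. (Your appeal to \cite[I, Claim 3.3]{GM:lg} also presumes the fixed points of $\sigma$ are your chosen points $z_i$, which your construction does not guarantee --- only the count of branch points, which you would still need to extract from the pole orders of the $\omega_v$.) Finally, your closing sentence concedes that the simultaneous, $C$-equivariant choice of all local covers and gluings is not carried out; in the paper this is not an extra difficulty to be finessed by ``freedom to modify Artin--Schreier representatives,'' but is absorbed into the construction itself: each $B_v/A_v$ is built with its $G_v=\ints/p\rtimes C_v$-action from the start, the edge annuli carry $G_e$-actions, Lemma \ref{Lboundarystructure} makes the identifications $G_e$-equivariant, and $G$ permutes the pieces as it permutes $V(\Gamma)$ and $E(\Gamma)$. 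As it stands, your proposal is an outline whose local normal forms and edge-matching conditions are incorrect and whose patching step --- the substance of \cite[Theorem 3.6]{BW:ll} --- is missing.
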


\begin{proof}[Very rough sketch of proof.]
The edges $e \in E(\Gamma)$ of the graph $\Gamma$ of $\mc{T}$ correspond to open annuli $\mc{A}_e$, 
whereas the vertices $v \in V(\Gamma)$ (other than the root vertex) correspond to (possibly punctured) closed discs $\mc{U}_v$, 
each $\mc{U}_v$ being a closed disc with $r_v - 1$ open discs of the same radius removed, where $r_v$ is the number of edges incident to $v$ 
(see \S\ref{Ssemistable}). 
These annuli and punctured discs fit together to form the open unit disc.  Furthermore, for each $\mc{U}_v$, we can use $\delta_v$ and 
$\omega_v$ to construct a $\ints/p$-cover of
$\mc{U}_v$.  Because of the compatibility properties of the $\delta_v$, $\omega_v$, and $\epsilon_e$ in the definition of Hurwitz tree, 
we can glue in $\ints/p$-covers of the annuli $\mc{A}_e$ to form an open unit disc.  This disc has a $\ints/p$-action, and it
turns out it even has a faithful $G$-action that satisfies the requirements of the proposition.
\end{proof}

\begin{proof}[More detailed version of proof.]
Maintain the notations from the sketch above.  Let $Z_R$ be a model of $\proj^1_K$ with special fiber $\ol{Z}$ 
(i.e., a flat $R$-scheme whose special fiber is $\ol{Z}$ and whose generic fiber is $\proj^1_K$) such that the specialization of $\infty$ is 
$\ol{\infty}'$.  Write $\mc{Z}$ for the formal completion 
of $Z_R$ at $\ol{Z}$.  Now, if $v \in V(\Gamma)$ is a vertex other than the root vertex, then the irreducible component $\ol{S}_v$ of $\ol{Z}$ 
corresponding to $v$ contains $r_v$ points $\ol{s}_1, \ldots, \ol{s}_{r_v}$
from among $\ol{\infty}'$ and the singular points of $\ol{Z}$.  These correspond to the edges of $\Gamma$ incident to $v$.  
If $\ol{U}_v \subseteq \ol{S}_v$ is the 
complement of these points, then the formal subscheme $\mc{U}_v \subseteq \mc{Z}$ that lifts $\ol{U}_v$ (\S\ref{Sformal}) is a projective line with $r_v$ 
open discs $\mc{D}'_{v, 1}, \ldots, \mc{D}'_{v, r_v}$ corresponding to $\ol{w}_1, \ldots, \ol{w}_{r_v}$ removed 
(alternatively, we can view $\mc{U}_v$ as a closed disc with $r_v - 1$ open discs removed, see \S\ref{Ssemistable}).  
Write $\mc{U}_v = \Spf A_v$.  In \cite[\S3.4]{BW:ll}, the data $\delta_v$ and $\omega_v$ are
used to construct a $\ints/p$-extension $B_v/A_v$, where $B_v$ is abstractly isomorphic to $A_v$.  Thus, 
the formal scheme $\mc{V}_v := \Spf B_v$ can
also be viewed as a formal projective line $\mc{Y}$ with $r_v$ open discs $\mc{D}_{v, 1}, \ldots, \mc{D}_{v, r_v}$ removed, with each 
$\mc{D}_{v, i}$ lying above the respective $\mc{D}'_{v, i}$.   

If $C_v \subseteq C$ is the stabilizer of $v$, then the action of $\ints/p$ on $B_v$ extends to an action of 
$G_v := \ints/p \rtimes C_v \subseteq G$ on $B_v$, thus on $\mc{V}_v$.   Each $\mc{D}_{v,i}$ has a boundary $\del \mc{D}_{v,i}$.  If $C_{v, i} 
\subseteq C_v$ is the stabilizer of the edge of $\Gamma$ corresponding to $\mc{D}_{v,i}'$, then $G_{v, i} := \ints/p \rtimes C_{v, i}$ acts on 
$\del \mc{D}_{v,i}$.
This gives rise to a $G_{v, i}$-cover $\del \mc{D}_{v,i} \to \del \mc{D}_{v,i}/G_{v,i}$.  
It is shown (\cite[Lemma 3.3(iii), Prop.\ 3.9]{BW:ll}) that this extension is of type 
$(\delta_v, h_{v, i} := -\ord_{\ol{w}_i}(\omega_v) - 1, \alpha)$, where $\alpha: C_{v, i} \to (\FF_p)^{\times}$ is the unique character
such that $\alpha^{-h_{v,i}} = \chi |_{C_{v, i}}$ (note that $h_{v,i}$ is called $h_i$ in \cite[Prop.\ 3.9]{BW:ll}.  There is a sign
error in that paper which explains why our $h_{v,i}$ is their $h_i$).

Now, suppose $e \in E(\Gamma)$ is an edge with stabilizer $C_e$.  Consider the two vertices $v_1$ and $v_2$ incident to $e$.  
Assume for the moment that neither $v_1$ nor $v_2$ is the root vertex.  Then we have seen that there is a 
$G_e := \ints/p \rtimes C_e$-cover of boundaries of open discs associated to each of $v_1$ and $v_2$, namely, the covers
$\del \mc{D}_{v_1, i} \to \del \mc{D}_{v_1, i}/G_e$, and $\del \mc{D}_{v_2, j} \to \del \mc{D}_{v_2, j}/G_e$ for the correct $i$ and $j$ 
corresponding to $e$.  Suppose
these covers are of type $(\delta_1, h_1, \alpha_1)$ and $(\delta_2, h_2, \alpha_2)$, respectively.
By (vi) of Proposition \ref{Paction2hurwitz}, we have $h_2 = -h_1$.  This gives us that
$\alpha_2 = \alpha_1^{-1}$.  By (ii) and (vii) of Proposition \ref{Paction2hurwitz}, we have $0 \leq \delta_1, \delta_2 \leq 1$ and
$$\delta_1 - \delta_2 = (p-1)\epsilon_e (-h_1) = (p-1)\epsilon_e h_2.$$  In this situation, \cite[\S3.5]{BW:ll} exhibits an open annulus 
$\mc{A}_e$ with thickness $\epsilon_e$ and $G_e$-action such that one boundary is of type
($\delta_1$, $h_1$, $\alpha_1$) whereas the other is of type ($\delta_1 + (p-1)\epsilon_e h_1$, $-h_1$, $\alpha_1^{-1}$).  By Lemma 
\ref{Lboundarystructure}, one can identify the boundaries of $\mc{A}_e$ with $\del \mc{D}_{v_1, i}$ and $\del \mc{D}_{v_2, j}$ in a 
$G_e$-equivariant way, and thus we can glue $\mc{A}_e$ to $\mc{V}_{v_1}$ and $\mc{V}_{v_2}$ while respecting the $G_e$-action.

Alternatively, 
If $e$ is the edge $e_0$, incident to $v_1$ and the root vertex $v_0$, then \cite[\S3.5]{BW:ll} constructs an annulus $\mc{A}_{e_0}$ of thickness 
$\epsilon_{e_0}$ with one boundary of type $(\delta_1, h_1, \alpha_1)$ and the other of type $(0, -h_1, \alpha_1^{-1})$.  
Again, this annulus can be glued along its first boundary to $\mc{V}_{v_1}$ while respecting the $G_e$-action.  We know that $-h_1$ is just 
$\ord_{\ol{\infty}'}(\omega_{v_1}) + 1$, which by \cite[Lemma 3.3(i)]{BW:ll} is equal to $h$.

By gluing the annuli $\mc{A}_e$ for all $e \in E(\Gamma)$ to the $\mc{V}_v$ for all $v \ne v_0 \in V(\Gamma)$ as above, we obtain an 
open disc $\mc{D}$.  The $G$-action on $\mc{D}$ 
is given by having $G$ permute the $\mc{V}_v$ and $\mc{A}_e$ just as it permutes $V(\Gamma)$ and $E(\Gamma)$, 
and having the stabilizers $G_v$ and $G_e$ of each vertex and edge act as in the construction.  One checks that the $G$-action gives rise to
the original Hurwitz tree $\mc{T}$.
\end{proof}

\begin{corollary}\label{Churwitz2action}
If $\mc{D} \cong \Spec R[[U]]$ is the open disc with $G$-action
corresponding to the Hurwitz tree in Proposition \ref{Phurwitz2action}, then the reduction of the
extension $R[[U]]/R[[U]]^G$ to characteristic $p$ has unique positive lower jump $h$.
\end{corollary}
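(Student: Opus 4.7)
The plan is to identify the unique positive lower jump of the reduced extension $k[[u]]/k[[t]]$ with the conductor $h$ of the $G$-action at the boundary $\partial \mc{D}$, which was prescribed to equal $h$ by construction in Proposition \ref{Phurwitz2action}. First, because the constructed $G$-action on $\mc{D}$ has no inertia above a uniformizer $\pi$ of $R$, the fixed ring $R[[U]]^G$ is abstractly isomorphic to $R[[T]]$ (see \S\ref{Sgalois}), and the extension $R[[U]]/R[[U]]^G$ reduces to a faithful $G$-Galois extension $k[[u]]/k[[t]]$. Such an extension is of the form described in Lemma \ref{Lzpzmform}, and in particular has a \emph{unique} positive lower jump $h'$. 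It therefore suffices to prove $h' = h$.

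Next, I fix an element $\sigma \in G$ of order $p$. By the definition of conductor given just before Lemma \ref{Lboundarystructure}, one may write
$$\frac{\sigma(U)}{U} - 1 = \pi^n f(U),$$
where $f(U) \in R[[U]]\{U^{-1}\}$ has coefficients of nonnegative valuation and at least one coefficient of valuation zero; the conductor at $\partial \mc{D}$ is $h = v_u(\bar f(u))$. Since $\sigma$ reduces to a nontrivial element of $\Aut_{k,\text{cont}}(k[[u]])$, the quantity $\sigma(U)/U - 1$ does not vanish modulo $\pi$, forcing $n = 0$. Reducing modulo $\pi$ gives $\bar\sigma(u)/u - 1 = \bar f(u)$ in $k((u))$, and because $\bar\sigma$ preserves the maximal ideal $(u)$ of $k[[u]]$, the element $\bar f(u)$ in fact lies in $k[[u]]$. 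Consequently
$$v_u(\bar\sigma(u) - u) = 1 + v_u(\bar f(u)) = h + 1.$$

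By the standard definition of the lower ramification filtration, this equation says precisely that $\sigma \in G_h \backslash G_{h+1}$ in the filtration of $k[[u]]/k[[t]]$, so $h$ is a positive lower jump, and combined with the uniqueness of the positive lower jump from the first paragraph, we conclude $h' = h$. The main delicate point is to ensure that the conductor invariant used here is independent of the choice of order-$p$ element $\sigma$; this is standard, as $\sigma^j(U) - U \equiv j(\sigma(U) - U)$ modulo higher-order terms whenever $p \nmid j$, so the $u$-adic valuation of $\bar f(u)$ is common to all nontrivial elements of $\langle \sigma \rangle$. Aside from this verification, the argument is a direct unwinding of definitions and I do not expect a significant obstacle.
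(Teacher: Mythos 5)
Your argument is correct and follows essentially the same route as the paper: both identify the unique positive lower jump of the reduced extension with the boundary conductor $h$ supplied by Proposition \ref{Phurwitz2action}, using the definition of the conductor and the lower numbering. Your write-up simply unwinds that identification explicitly (showing $n=0$ from faithfulness of the reduction and computing $v_u(\bar\sigma(u)-u)=h+1$), which is a fuller version of the same proof.
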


\begin{proof}
By Proposition \ref{Phurwitz2action}, the group $G$ acts on $R[[U]]\{U^{-1}\}$ with conductor $h$.  The reduction of 
$R[[U]]/R[[U]]^G$ to characteristic $p$ is a $G$-extension whose associated extension of fraction fields is the reduction of
$R[[U]]\{U^{-1}\}/R[[U]]\{U^{-1}\}^G$.  By the definition of conductor and the lower numbering, the reduction of $R[[U]]/R[[U]]^G$ has unique
lower jump equal to the conductor of $R[[U]]\{U^{-1}\}/R[[U]]\{U^{-1}\}^G$, which is $h$.
\end{proof}

\subsubsection{Constructing Hurwitz trees.}\label{Shurwitzconstruction}
In order to complete the proof of Theorem \ref{Tzpzm}, we need only show:

\begin{prop}\label{Phurwitzconstruction}
If $C \cong \ints/m$, if $\chi: C \to \FF_p^{\times}$ is an injective character, and if $h > 1$ satisfies $h \equiv -1 \pmod{m}$, then
there exists a Hurwitz tree of type $(C, \chi)$ with conductor $h$.  
\end{prop}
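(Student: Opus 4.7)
The plan is to construct an explicit Hurwitz tree of type $(C, \chi)$ with conductor $h$, verifying the ten compatibility conditions of Proposition \ref{Paction2hurwitz} directly.  The injectivity of $\chi$ forces $m \mid p-1$, so for a chosen generator $c$ of $C$ we have $\zeta_m := \chi(c) \in \FF_p^{\times} \subseteq k^{\times}$ of order $m$, and I realize the $C$-action on each affine coordinate as multiplication by $\zeta_m$.

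The simplest case is a depth-one tree: take $\ol{Z} = \proj^1_k$ with a single non-root vertex $v$, set $\ol{\infty}' = \infty$, and let $C$ act on $\ol{Z}$ by $t \mapsto \zeta_m t$.  Writing $h+1 = mn$, pick $\beta_1, \dots, \beta_n \in k^{\times}$ in distinct $C$-orbits and mark the points $\ol{z}_{i,a} = \zeta_m^a \beta_i$.  Since the divisor of $\omega_v$ must be supported on marked points with only simple poles at the $\ol{z}_{i,a}$ (condition (iii)), we are forced to take
\[
\omega_v \;=\; \frac{c_0 \, dt}{\prod_{i=1}^{n}(t^m - \beta_i^m)}
\]
for some constant $c_0 \in k^{\times}$.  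A direct computation gives $\ord_\infty \omega_v = h - 1$, shows that $c^{*}\omega_v = \chi(c)\omega_v$ (using $\zeta_m^{h+1} = 1$, which holds because $m \mid h+1$), and evaluates the residue at $\zeta_m^a \beta_i$ as $c_0 \zeta_m^{a} / \bigl(m \, \beta_i^{m-1} \prod_{j \ne i}(\beta_i^m - \beta_j^m)\bigr)$.  Setting $\delta_v = 1$ and $\epsilon_{e_0} = 1/((p-1)h)$ then fulfills (ii), (v), and (vii), and the remaining conditions are routine except (iv), which demands $\omega_v$ be logarithmic.  Since $\omega_v$ has only simple poles on $\proj^1_k$, this is equivalent to all its residues lying in $\FF_p$; choosing $c_0 \in \FF_p^{\times}$ and $\beta_i \in \FF_p^{\times}$ with distinct $m$-th powers places each residue in $\FF_p^{\times}$ automatically, and such a choice exists whenever $n \le (p-1)/m$.

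The main obstacle is the case $n > (p-1)/m$, when the prime field is too small for $n$ distinct $m$-th powers and no single-vertex tree can simultaneously satisfy the logarithmic condition and the required $C$-equivariance.  To handle this I refine $\mc{T}$ by partitioning the $n$ orbits into clusters of size at most $(p-1)/m$, placing each cluster on its own leaf component of $\ol{Z}$ (where I apply the depth-one construction locally with $\delta = 1$ to produce a logarithmic differential), and joining the leaves through a spine of internal vertices that carry no $\ol{z}_j$.  On each spine component there are no marked points forcing simple poles, so $\omega_v$ may be chosen exact with $\delta_v < 1$ and with higher-order poles at the nodes, as dictated by conditions (vi) and (vii).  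The delicate step will be selecting the thicknesses $\epsilon_e$ and depths $\delta_v$ globally so that $0 < \delta_v \le 1$ throughout, and arranging the $C$-action on the spine so that the tangent-space characters at the nodes (condition (ix)) and the global equivariance (condition (x)) all come out right; this combinatorial balancing is the hardest part of the argument.
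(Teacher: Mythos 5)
Your construction for small conductor is essentially the paper's: for $h<p$ (equivalently your $n\le (p-1)/m$) the paper also takes $\ol{Z}$ smooth with a single non-root vertex, sets $\delta_{v_1}=1$, $\epsilon_{e_0}=\frac{1}{h(p-1)}$, marks the $C$-orbits of points $z_i\in\FF_p^{\times}$, and uses $\omega_{v_1}=dz/\prod_i(z^m-z_i^m)$, quoting \cite[Lemma 1.5]{BWZ:dd} for logarithmicity (your residue argument is a correct substitute on $\proj^1_k$, since a form with only simple poles and residues in $\FF_p$ is $du/u$ there). So up to the threshold $h<p$ your proof agrees with the paper's.

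The genuine gap is the case $h>p$. There the paper does not give a construction either; it invokes \cite[Proof of Theorem 2.1]{BWZ:dd} and \cite[Theorem 4.3]{BW:ll}, and what you offer in its place is only a plan, with the decisive steps explicitly deferred (``this combinatorial balancing is the hardest part of the argument''). Concretely, two things are missing. First, on your spine components there are no marked points, so condition (iii) forces the divisor of each spine differential to be supported entirely at the nodes: you must produce \emph{exact} differentials on $\proj^1_k$ whose poles have the exact orders dictated by condition (vi) at the leaf-ward nodes (order $=$ number of marked points on the adjacent leaf), whose remaining zero is concentrated at the root-ward node, which have no other zeros, and which transform under $C$ by $\chi$; the existence of such forms (and of the logarithmic forms on leaves with all zeros concentrated at one node) is a Pagot-type existence problem and is precisely the content of the cited results — it is not a routine consequence of choosing thicknesses. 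Second, the global constraints — $0<\delta_v\le 1$ at every vertex via condition (vii), the compatibility at the root edge forcing $\delta=(p-1)\epsilon_{e_0}h$ at the vertex adjacent to $v_0$, and conditions (ix)--(x) along the spine — are asserted to be arrangeable but never verified. (A minor further point: your claim that \emph{no} single-vertex tree exists when $n>(p-1)/m$ only shows that your particular choice of $\FF_p$-rational $\beta_i$ fails; this does not matter for the proof, but it is not established as stated.) As written, the proposal proves the proposition only for $h<p$.
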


\begin{proof}
If $h < p$, we give the construction from \cite[Proof of Prop.\ 1.4]{BWZ:dd}.  
We take $\ol{Z}$ to be \emph{smooth}, so that the graph $\Gamma$ will only have
two vertices $v_0$ and $v_1$, one edge $e_0$ incident to both, and $v_0$ is the root vertex.  
Set $\delta_{v_0} = 0$, $\delta_{v_1} = 1$, and $\epsilon_{e_0} = \frac{1}{h(p-1)}$.  Write $h = rm - 1$.
Consider $z_1, \ldots, z_r \in \FF_p^{\times}$ such that the elements
$$z_{i,j} := \zeta_m^j z_i, \ \ 1 \leq i \leq r, \ 0 \leq j \leq m-1$$ are pairwise distinct (note: $h \ne p-1$).  
Here $\zeta_m$ is an $m$th root of unity, which we think of as an element of $\FF_p^{\times}$.  Take $\infty$ and the $z_{i,j}$ to be the
marked points.  Then set $$\omega_{v_1} = \frac{dz}{\prod_{i=1}^r (z^m -z_i^m)}.$$  One shows that this is logarithmic (\cite[Lemma 1.5]{BWZ:dd}).  
It has a zero at $\infty$ of order $rm-2 = h-1$.  Lastly, if $c$ is a generator of $\ints/m$, take the action of $c$ on $\ol{Z}$ to be given by 
$c^*z = \zeta_m z$.  It is then an easy exercise to verify properties (i)---(x) of Proposition \ref{Paction2hurwitz}.

If $h > p$, then we cannot take $\ol{Z}$ to be smooth, and the construction is more complicated.  See \cite[Proof of Theorem 2.1]{BWZ:dd} and
\cite[Theorem 4.3]{BW:ll}.
\end{proof}

\begin{remark}\label{Rbighurwitz}
The methods used to prove Theorem \ref{Tzpzm} work equally well to show that all $\ints/p$-extensions of $k[[t]]$ lift to characteristic zero
(this is the subject of Henrio's thesis \cite{He:ht}).  Of course, one would like to extend these methods to lift $\ints/p^n$- and
$\ints/p^n \rtimes \ints/m$-extensions of $k[[t]]$ for which the KGB obstruction vanishes.  Indeed, there is a notion of Hurwitz tree for
actions of $\ints/p^n \rtimes \ints/m$ on the open unit disc (see \cite{BW:ac}).  However, there is currently no good analogue of Lemma
\ref{Lboundarystructure} for groups with cyclic $p$-Sylow subgroups of order greater than $p$.  Without such an analogue, the gluing that is
essential for the proof of Proposition \ref{Phurwitz2action} cannot be performed.
\end{remark}

\appendix
\section{Non-archimedean geometry}\label{Sgeometry}
We briefly discuss the constructions and objects in non-archimedean geometry that are used in this paper, taking a rather na\"{i}ve perspective 
throughout.  
For a thorough introduction to formal and rigid geometry, see e.g. \cite{BGR:na}, or for a quick overview see \cite{Ga:gr} and \cite{He:dc}.
Throughout this appendix, let $R/W(k)$ be finite, and let $K = \Frac(R)$.

\subsection{Discs and annuli}\label{Sdiscs}
We call the scheme $\Spec R[[T]]$ the $\emph{open unit disc}$
(by abuse of language, the term ``open unit disc" is used regardless of $R$).  
For any algebraically closed field extension $K'/K$ where $K'$ has an absolute value extending that of $K$, the 
$K'$-points of $\Spec R[[T]]$ correspond to the elements $b \in K'$ with $|b| < 1$, by plugging in $T = b$ 
(these are the values at which all power series in $R[[T]]$ converge).  By abuse of language, we will call this set of $K'$-points the open
unit disc as well (we will do the same for the closed disc and annuli described below).

The scheme $\Spec R\{T\}$ is the $\emph{closed unit disc}$ (recall that $R\{T\}$ is the subring of $R[[T]]$ consisting of power series for which the
coefficients tend to $0$, see \S\ref{Snotation}).
For $K'/K$ as above, the $K'$-points of $\Spec R\{T\}$ correspond to the elements $b \in K'$ with $|b| \leq 1$, by plugging in $T = b$. 
The $\emph{boundary}$ of the open (or closed) unit disc is the scheme $\Spec R[[T^{-1}]]\{T\}$.  Note that this scheme has no $K$-points. 
The ring $R[[T^{-1}]]\{T\}$ is a complete DVR with residue field $k((t^{-1}))$.

If $a \in K$ such that $|a| = r$, then $\Spec R[[a^{-1}T]]$ (resp.\ $\Spec R\{a^{-1}T\}$) is the open (resp.\ closed) disc of radius $r$.  
This is isomorphic to the open (resp.\ closed) unit disc under the map $T \mapsto a^{-1}T$.

The \emph{open annulus of thickness $\epsilon$} is $\Spec R[[T, U]]/(TU - a)$, where $a \in R$ is such that $v(a) = \epsilon$ 
(recall that we always set $v(p) = 1$).  
For $K'/K$ as above, the $K'$-points of $\Spec R[[T, U]]/(TU - a)$ correspond to those $b \in K'$ with $|a| = p^{-\epsilon} < |b| < 1$ (or, stated in terms 
of the
valuation on $K'$, that $0 < v(b) < \epsilon$), by plugging in $T = b$.  The open annulus has two boundaries, one given by $\Spec R[[T]]\{T^{-1}\}$ 
and one given by $\Spec R[[U]]\{U^{-1}\}$.  Note that two open annuli over $R$ are isomorphic if and only if they have the same thickness.

\subsection{Semistable models of curves}\label{Ssemistable}
Let $X_R \to \Spec R$ be a semistable curve with smooth generic fiber $X$ and special fiber $\ol{X}$.  
If $\ol{x} \in \ol{X}$, then $\ol{x}$ is either a smooth point or a node.
If $\ol{x}$ is smooth, then $\Spec \hat{\mc{O}}_{X_R, \ol{x}}$ is isomorphic to an open unit disc, whereas if $\ol{x}$ is a node, 
then $\Spec \hat{\mc{O}}_{X_R, \ol{x}}$
is isomorphic to an annulus of some thickness $\epsilon$, which is an intrinsic property of the singularity $\ol{x} \in X_R$
(see, e.g., \cite[\S2.1.1]{Ra:sp}).  

If $X$ is a projective line, we can give more detail.  First assume that $X_R$ is smooth.  
Then there are (many) elements $T$ in the function field of $X$ such that $K(T) \cong K(X)$ and the local
ring at the generic point of the special fiber of $X_R$ is the valuation ring of $K(T)$ corresponding to the
Gauss valuation.  We say that $T$ is a \emph{coordinate} of $X_R$ (note that there can be many different coordinates for a given model).  
If $T$ is a coordinate of $X_R$, if $a, b \in R$, and if $c \in K$, 
then $T=a$ and $T=b$ coalesce on the special fiber iff $v(a-b) > 0$, and $T= \infty$ and $T=c$ coalesce on the special fiber iff $c \notin R$. 

Conversely, if $T$ is any rational function on $X$ such that $K(T) \cong K(X)$, there is a smooth
model $X_R$ of $X$ such that $T$ is a coordinate of $X_R$.

Now, drop the assumption that $X_R$ is smooth.
Then $\ol{X}$ is a tree-like configuration of $\proj^1_k$'s.
Each irreducible component $\ol{W}$ of the special fiber $\ol{X}$ of $X_R$ corresponds to
a smooth model of $X$, and thus to (many) coordinates $T$.  Such a $T$ is called a \emph{coordinate on $\ol{W}$} in this case.

Let $X_R$ be a semistable model for $X = \proj^1_K$.  Let $\ol{K}$ be an algebraic closure of $K$.  Suppose $\ol{x}$ is a smooth point
of $\ol{X}$ on the irreducible component $\ol{W}$.  Let $T$ be a coordinate on $\ol{W}$ such
that $T = 0$ specializes to $\ol{x}$.  The complete local ring of $\ol{x}$ in $X_R$ is isomorphic to $R[[T]]$. 
The set of points of $X(\ol{K})$ that specialize to $\ol{x}$ is the open unit disc $v(T) > 0$.
  
Now, let $\ol{x}$ be a nodal point of $\ol{X}$, lying on an irreducible component $\ol{W}$. Suppose $T$
is a coordinate on $\ol{W}$ such that $T=0$ specializes to the connected component of $\ol{X} \backslash \{\ol{x}\}$ not containing 
$\ol{W} \backslash \{\ol{x}\}$.  Then the complete local ring of $\ol{x}$ 
in $X_R$ is isomorphic to $R[[T,U]]/(TU - p^{\epsilon})$ where $\epsilon$ is the thickness of the annulus corresponding to $\ol{x}$.
The set of points of $X(\ol{K})$ that specialize to $\ol{x} \in \ol{X}$ is the open annulus given by $0 < v(T) < \epsilon$. 

Suppose $T$ is a coordinate on a component $\ol{W}$ of $\ol{X}$, and suppose $T = \infty$ specializes away from $\ol{W}$.  
Let $\ol{U}$ be the subset of $\ol{W}$ consisting of smooth points of $\ol{X}$.  Then the set of points of 
$X(\ol{K})$ that specialize to $\ol{U}$ is a punctured closed unit disc, that is, a closed disc with an open unit disc removed for each
point of $(\ol{X} \backslash \{\ol{\infty}\}) \backslash \ol{U}$.

\subsection{Stable models and marked discs}\label{Smarkeddiscs}
Let $X = \proj^1_K$, and suppose we are given $x_1, \ldots, x_r \in X(K)$, with $r \geq 3$.
Then there is a unique semistable model $X^{st}$ for $X$ over $R$ such that the specializations of $x_1, \ldots, x_r$ to the special fiber 
$\ol{X}$ of $X^{st}$ are pairwise distinct, and such that each irreducible component of $\ol{X}$ contains at least three points that 
are either singular or specializations of an $x_i$.  The $R$-curve $X^{st}$ is called the \emph{stable model} of the marked curve 
$(X; x_1, \ldots, x_r)$.

Consider the open unit disc $\mc{D} := \Spec R[[T]]$, and suppose we are given $x_1, \ldots, x_r$ in $\mc{D}(K)$, with $r \geq 2$.
From \S\ref{Sdiscs}, we can think of $x_1, \ldots, x_r$ as elements of the maximal ideal of $R$.  
Let $X^{st}$ be the stable model of the marked curve $(\proj^1_K; \infty, x_1, \ldots, x_r)$, with special fiber $\ol{X}$ (thinking of
$\proj^1(K)$ as $K \cup \{\infty\}$).  We will call $X^{st}$ the \emph{stable model of $\proj^1_K$ corresponding to the marked disc 
$(\mc{D}; x_1, \ldots, x_r)$}.  Let $\ol{\infty}$ be the specialization of $\infty$ to $\ol{X}$.  
Note that the set of points of $\proj^1(K)$ that specialize away from $\ol{\infty}$ 
form a closed disc.  In particular, each point in this closed disc is an element of the maximal ideal of $R$ (although some elements of the
maximal ideal of $R$ might specialize to $\ol{\infty}$).

\begin{example}\label{Eseparateddisc}
Let $R = W(\ol{\FF_5})[a]/(a^2 - 5)$, and consider the marked open unit disc $(\mc{D}; 0, 5, 10, 25, 5a, 5 + 5a)$.
Then the stable model of $X = \proj^1_K$ corresponding to our marked disc has a special fiber $\ol{X}$
with four irreducible components $\ol{X}_1$, $\ol{X}_2$, $\ol{X}_3$, and $\ol{X}_4$, as in Figure \ref{Fstablemodel}
(the specializations of the marked points are marked with overlines in Figure \ref{Fstablemodel}).  

\begin{figure}[htp]
\centering
\includegraphics[totalheight=.4\textheight, width=.85\textwidth]{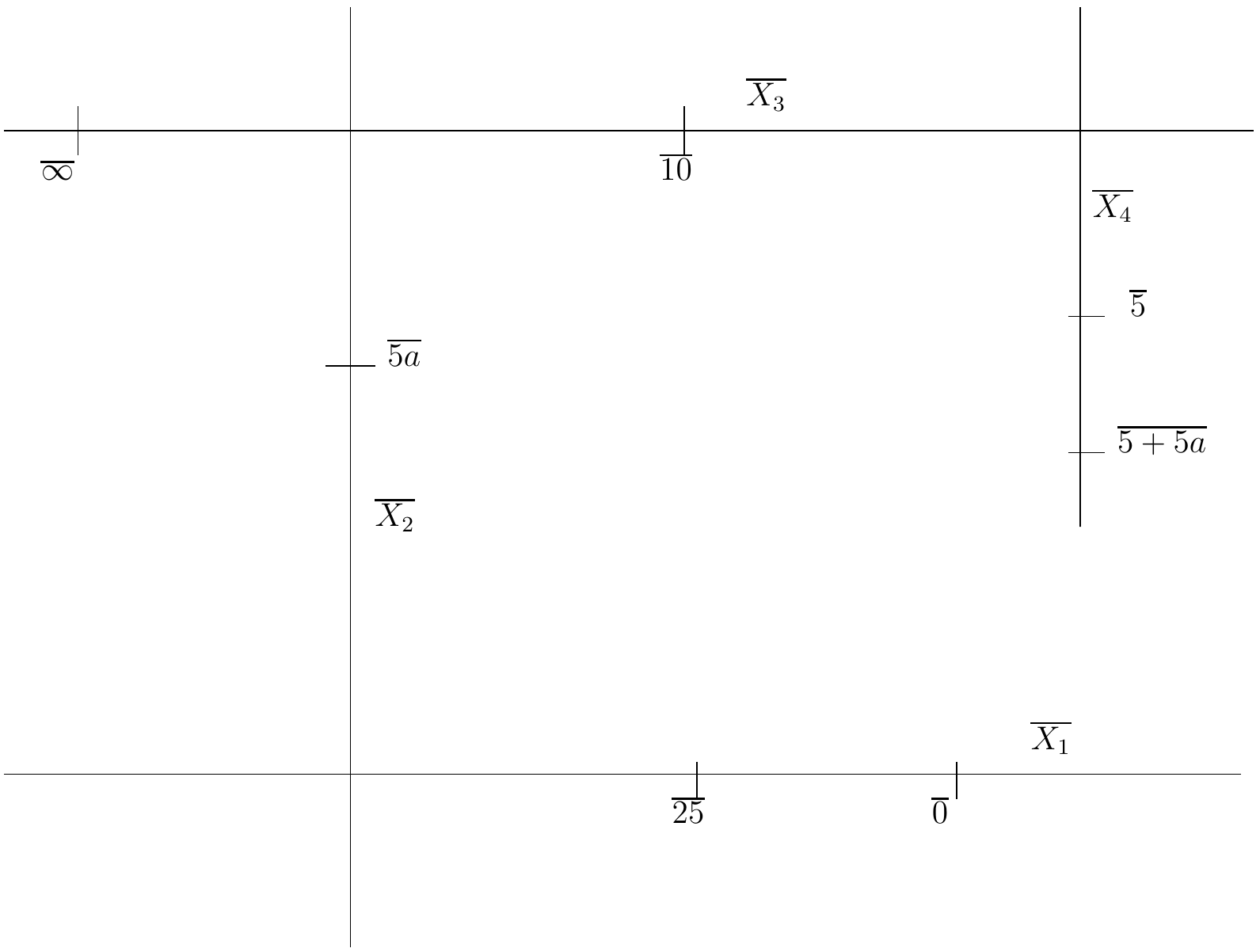}
\caption{The special fiber $\ol{X}$}\label{Fstablemodel}
\end{figure}

The following table shows where the $\ol{K}$-points of $X$ specialize.\\

\begin{tabular}{l|l}
Subscheme $\ol{V}$ of $\ol{X}$ & Points of $X(K)$ that specialize to $\ol{V}$\\
\hline 
$\ol{X}_1 \backslash \ol{X}_2$ & $\{ x |\ v(x) \geq 2 \}$ \\
$\ol{X}_1 \cap \ol{X}_2$ & $\{ x |\ \frac{3}{2} < v(x) < 2 \}$ \\
$\ol{X}_2 \backslash (\ol{X}_1 \cup \ol{X}_3)$ & $\{ x |\ v(x) = \frac{3}{2} \}$ \\
$\ol{X}_2 \cap \ol{X}_3$ & $\{ x |\ 1 < v(x) < \frac{3}{2} \}$ \\
$\ol{X}_3 \backslash (\ol{X}_2 \cup \ol{X}_4 \cup \{\ol{\infty}\})$ & $\{ x |\ v(x) = 1 \wedge v(x-5) = 1 \}$ \\ 
$\ol{X}_3 \cap \ol{X}_4$ & $\{x |\ 1 < v(x-5) < \frac{3}{2} \}$\\
$\ol{X}_4 \backslash \ol{X}_3$ & $\{ x |\ v(x - 5) \geq \frac{3}{2} \}$ \\
$\ol{\infty}$ & $\{ x |\ v(x) < 1 \}$\\
\end{tabular}
\end{example}

\subsection{Formal schemes}\label{Sformal}
Corresponding to each disc or annulus from \S\ref{Ssemistable}, there is a \emph{formal} disc or annulus.  In particular, if a disc or annulus is 
given by $\Spec A$, then the corresponding formal disc or annulus is $\Spf A$.  In fact, if $\mc{X}$ is the formal completion of $X_R$ 
at its special fiber (the ``formal projective line"), then any subscheme $\ol{V} \subseteq \ol{X}$ lifts to a (canonical) formal subscheme $\mc{V} \subseteq 
\mc{X}$, regardless of the genus of $X$.  For instance, take any (Zariski) open subscheme $V_R$ of $X_R$ such that $\ol{V}$ is a closed subscheme 
of $V_R$.  Then $\mc{V}$ is the formal completion of $V_R$ at $\ol{V}$, viewed as a formal subscheme of $\mc{X}$ via the inclusion $V_R \to X_R$.

\section{Depth and deformation data}\label{Sstablegraph}
\subsection{Depth}\label{Sdepth}
Maintain the notation $R$ and $K$ from Appendix \ref{Sgeometry}.  For any $R$-scheme $X$ in this appendix, write $X_K$ for its generic fiber
and $X_k$ for its special fiber.  Likewise, for an $R$-algebra $A$, write $A_K$ for $A \otimes_R K$ and $A_k$ for $A \otimes_R k$.  We assume
throughout that $R$ contains the $p$th roots of unity.

Given a $G$-extension $B/R[[T]]$, it is important in \S\ref{Sdepthandsep} 
to know how far its reduction is from being separable.  In particular, if the reduction is separable,
then $B/R[[T]]$ is at least a birational lift of the normalization of the reduction.  The most basic interesting case is when $G \cong \ints/p$.
We start off by stating a general structure theorem about $\ints/p$-extensions of certain $R$-algebras.

\begin{prop}[\cite{He:ht}, Ch.\ 5, Proposition 1.6]\label{Pmupreduction}
Let $X = \Spec A$ be a flat affine scheme over $R$, with relative dimension $\leq 1$ and integral fibers.
We suppose that $A$ is a factorial $R$-algebra that is complete with respect to the $\pi$-adic valuation (for $\pi$ a uniformizer of $R$).
Let $Y_K \to X_K$ be a non-trivial \'{e}tale $\ints/p$-cover, given by an equation $y^p = f$, where $f$ is invertible in $A_K$.
Let $Y$ be the normalization of $X$ in $K(Y_K)$; we suppose the special fiber of $Y$ is integral (in particular, reduced).
Let $\eta$ (resp.\ $\eta'$) be the generic point of the special fiber of $X$ (resp.\ $Y$).  
The local rings $\mc{O}_{X, \eta}$ and $\mc{O}_{Y, \eta'}$ are thus discrete
valuation rings with uniformizer $\pi$.  Write $\delta$ for the valuation of the different of $\mc{O}_{Y,
\eta'}/\mc{O}_{X, \eta}$.  We then have two cases, depending on the value of $\delta$ (which always
satisfies $0 \leq \delta \leq 1$):

\begin{itemize}
\item If $\delta = 1$, then $Y \cong \Spec B$, with $B = A[y]/(y^p - u)$, for
$u$ a unit in $A$, not congruent to a $p$th power modulo $\pi$, and unique up to
multiplication by a $p$th power in $A^{\times}$.  We say that the $\ints/p$-cover $Y_K \to X_K$ has
\emph{multiplicative reduction}.  \item If $0 \leq \delta < 1$, then $\delta = 1 - n(\frac{p-1}{e})$, where $n$ is
an integer such that $0 < n \leq e/(p-1)$.  Then $Y \cong \Spec B$, with $$B = \frac{A[w]}{(\frac{(\pi^n w+1)^p
-1}{\pi^{pn}} - u)},$$ for $u$ a unit of $A$, not congruent to a $p$th power modulo $\pi$.  Also, $u$ is unique in the
following sense:  If an element $u' \in A^{\times}$ could take the place of $u$, then there exists $v \in A$ such that 
$$\pi^{pn}u' + 1 = (\pi^{pn}u + 1)(\pi^nv + 1)^p.$$  
If $\delta > 0$ (resp.\ $\delta = 0$), we say that the $\ints/p$-cover $Y_K \to X_K$ has
\emph{additive reduction} (resp. \emph{\'{e}tale reduction}).
\end{itemize}
\end{prop}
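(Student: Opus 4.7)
The plan is to put the Kummer equation $y^p = f$ into one of the two stated normal forms by exploiting the factoriality of $A$ together with the $\ints/p$-indeterminacy in the choice of $f$, and then to compute the different directly from the defining polynomial in each case.

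First I would reduce to the case $f \in A^{\times}$. Since $R$ contains $\zeta_p$, the étale cover $Y_K \to X_K$ is classified by the class of $f$ in $A_K^{\times}/(A_K^{\times})^p$, and the substitution $y \mapsto g y$ with $g \in A^{\times}$ multiplies $f$ by $g^p$. Because $A$ is factorial and $f$ is a unit in $A_K = A[\pi^{-1}]$, one can write $f = u \pi^a$ with $u \in A^{\times}$, and after adjusting $y$ we may assume $0 \leq a < p$. If $a > 0$, then at the generic point $\eta$ of the special fiber the equation $y^p = u \pi^a$ would force $v_{\pi}(y) = a/p \notin \ints$, contradicting the fact that $\pi$ remains a uniformizer of $\mc{O}_{Y, \eta'}$ (which holds because the special fiber of $Y$ is integral). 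Hence $a = 0$.

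Next I would introduce $n \geq 0$ as the largest integer such that $f$ can be brought into the form $1 + \pi^{pn}(\text{something})$ after multiplication by a $p$th power in $A^{\times}$. If $n = 0$, then $\bar{f} \in A_k^{\times}$ is not a $p$th power at $\eta$, and $y^p = f$ is itself the multiplicative normal form; differentiating yields $p y^{p-1}$, a unit times $p$, so the different has $p$-normalized valuation $1$, giving $\delta = 1$. If $n > 0$, then the substitution $y = 1 + \pi^n w$ together with $f = 1 + \pi^{pn} u$ produces, after dividing by $\pi^{pn}$, the monic polynomial
\[
P(w) = w^p + \sum_{i=1}^{p-1} \binom{p}{i}\, \pi^{-(p-i)n} w^i - u,
\]
whose most delicate coefficient $p/\pi^{(p-1)n}$ is integral exactly when $n \leq e/(p-1)$. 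The different is generated by $P'(w) = p(1 + \pi^n w)^{p-1}/\pi^{(p-1)n}$, whose $\pi$-valuation is $e - (p-1)n$, giving $\delta = 1 - n(p-1)/e$ after normalizing so that $v(p) = 1$.

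Uniqueness of $u$ up to the stated equivalence would come from running the construction in reverse: two valid presentations of the same extension differ by a substitution $y \mapsto (1 + \pi^n v)\, y$, and expanding yields the claimed identity $\pi^{pn} u' + 1 = (\pi^{pn} u + 1)(\pi^n v + 1)^p$. The main obstacle I foresee is showing that the maximal $n$ is genuinely achieved by a Kummer move in $A$ itself (not merely in a localization or completion at $\eta$), and that at this maximal $n$ the residue $\bar u$ is not a $p$th power in $A_k$ at $\eta$ so that $B_k$ remains integral. Both points should rely on using $\pi$-adic completeness of $A$ to lift a $p$th root from the special fiber whenever one exists at $\eta$, combined with the integral-special-fiber hypothesis on $Y$ to certify the stopping condition.
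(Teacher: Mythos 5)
The paper itself gives no proof of this proposition (it is quoted from Henrio's thesis), so I am comparing your sketch with the standard argument. Your skeleton is the right one and matches that argument in outline: use factoriality and the primality of $\pi$ (integrality of the special fiber of $X$) to reduce $f$ to a unit of $A$, push $f$ as close to a $p$th power as possible, substitute $y = 1+\pi^n w$, read off $\delta$ from the derivative of the resulting minimal polynomial (valid because $\mc{O}_{Y,\eta'} = \mc{O}_{X,\eta}[w]$ once the stopping condition holds), and get uniqueness by comparing two presentations; for the latter, note that normality of $A$ already gives $g \in A^{\times}$ from $g^p = u'/u \in A^{\times}$, so the substitution you assert can indeed be derived.

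The genuine gap is the one you flag, and your proposed fix does not close it, for two distinct reasons. First, maximality of your $n$ (defined only through multiplication by $p$th powers of units of $A$, in jumps of $p$ in the exponent) does not guarantee that the coefficient $u$ in $fg^p = 1+\pi^{pn}u$ is a unit of $A$, nor that $\bar{u}$ is not a $p$th power in $\Frac(A_k)$: the process can get stuck at an intermediate congruence level $m$ with $p \nmid m$, in which case $\bar{u}=0$, the stated normal form fails, and your different computation from $P'(w)$ gives the wrong value. Ruling this out is exactly where the hypothesis that the special fiber of $Y$ is integral must be used a second time: by the classification of degree-$p$ Kummer extensions of the complete DVR $\hat{\mc{O}}_{X,\eta}$, being stuck at a level prime to $p$ (below the cap $pe/(p-1)$) forces $v_{\eta'}(\pi)=p$, i.e.\ a non-reduced special fiber of $Y$; you only invoke reducedness once, in the initial step showing $a=0$. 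Second, there is a local-to-global issue: the Kummer moves needed to increase the level exist a priori only over $\mc{O}_{X,\eta}$ (equivalently, the relevant reductions are $p$th powers in $\Frac(A_k)$, not in $A_k$), while the proposition demands a normal form with $u \in A^{\times}$ obtained by $p$th-power moves inside $A$. The $\pi$-adic completeness of $A$ only handles the vertical direction (e.g.\ it makes $1+\pi^n a$ a unit and lets successive approximations converge); it says nothing about this horizontal discrepancy. What bridges it is factoriality of $A$ (and the primality of $\pi$), e.g.\ by writing the element of $K(X_K)^{\times}$ realizing the maximal level as a quotient of coprime elements of $A$ and showing the denominator must be a unit. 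Your sketch names completeness where factoriality is the essential input, so the central existence step remains open as written.
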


Note that the map $Y_k \to X_k$ above is separable iff $\delta = 0$.  

\begin{definition}\label{Ddepth}
\begin{enumerate}[(i)]
\item If $\delta$ is as in Proposition \ref{Pmupreduction}, then the number $\frac{p}{p-1} \delta$ is called the \emph{depth} of the extension 
$B/A$ (or of the map $Y \to X$). 
\item If $X$ is as in Proposition \ref{Pmupreduction}, let $Y_K \to X_K$ be a $\ints/p^n$-cover that can be broken up into a tower of
$\ints/p$-covers $Y_K = T_n \to T_{n-1} \to \cdots \to T_1 \to T_0 = X_K$.  Assume that for all $i$, 
$1 \leq i \leq n$, the maps $T_{i+1} \to T_i$ and the normalization $A_i$ of $X$ in $T_i$ satisfy the properties of Proposition 
\ref{Pmupreduction} (with $A_i$ playing the role of $X$ and $T_{i+1} \to T_i$ playing the role of $Y_K \to X_K$).  
Then for each $T_i/T_{i-1}$, we can define $\delta_i$ as in Proposition \ref{Pmupreduction}.  If the normalization of $X$ in $K(Y_K)$ is $Y = \Spec B$, 
then
the depth of $B/A$ (or of $Y \to X$) is defined to be $$\left(\sum_{i=1}^{n-1} \delta_i\right) + \frac{p}{p-1} \delta_n.$$  This is called the \emph{effective
different} in \cite{Ob:fm1} and \cite{Ob:fm2}.
\end{enumerate}
\end{definition}

\subsection{Deformation data}\label{Sdefdata}
Let $Y \to X = \Spec A$ be a degree $p$ finite extension as in Proposition \ref{Pmupreduction}, let $\delta$ and $u$ be as in Proposition 
\ref{Pmupreduction}, let $\ol{u}$ be the reduction of $u$ to $A_k$, and assume $\delta > 0$.  Then the $\ints/p$-cover $Y \to X$ 
gives rise to a meromorphic differential 
form $\omega$ on $\Spec A_k$ as follows: if $Y_K \to X_K$ has multiplicative reduction, set $\omega = d\ol{u}/\ol{u}$.  If $Y_K \to X_K$ has
additive reduction, set $\omega = d\ol{u}$.  One verifies that $\omega$ is independent of the choice of $u$, and $\omega$ is called the 
\emph{deformation datum} corresponding to the map $Y \to X$.  See \cite[Ch.\ 5]{He:ht} for more details.

Furthermore, suppose that the natural $\ints/p$-action on $Y$ extends to a $\ints/p \rtimes \ints/m$ action on $Y$, with $p \nmid m$, which
descends to a $\ints/m$-action on $X$.  Let $\chi: \ints/m \to \FF_p^{\times}$ be the character given by $c\sigma c^{-1} = \sigma^{\chi(c)}$, for
$c \in \ints/m$ and $\sigma$ any generator of $\ints/p$.  Then, for any $c \in \ints/m$, we have $c^*\omega = \chi(c)\omega$ (this is mentioned in
\cite[p.\ 999]{We:br}, and proven in \cite[Construction 3.4]{Ob:vc}).  

\begin{remark}\label{Rdefdata}
The deformation datum $\omega$ is important in constructing Hurwitz trees, see \S\ref{Shurwitzaction}.  In particular, it is a characteristic
$p$ object that helps retain information that is lost when a branched cover is reduced from characteristic zero.  Deformation data are also extremely 
important in the proof of Theorem \ref{Tzpnlifting} by Wewers and the author, as they are used to construct the differential equations
mentioned in the proof of Proposition \ref{Psmoothing}.
\end{remark}
  
\begin{remark}\label{Rkato}
The depth and deformation datum of a map $Y \to X$ are the two components of Kato's differential Swan conductor, defined in \cite{Ka:sc}
(also see \cite[Ch.\ 1]{Br:rt}, especially \S1.4,  for an exposition).
\end{remark}

\section*{Acknowledgements}
I thank David Harbater and Stefan Wewers for useful conversations, as well as the referee for useful comments. 

\addcontentsline{toc}{section}{Bibliography}

\end{document}